\def \cA{\mathcal{A}}
\def \cC{\mathcal{C}}
\def \cF{\mathcal{F}}
\def \cG{\mathcal{G}}
\def \cH{\mathcal{H}}
\def \cI{\mathcal{I}}
\def \cJ{\mathcal{J}}
\def \cK{\mathcal{K}}
\def \cL{\mathcal{L}}
\def \cM{\mathcal{M}}
\def \cO{\mathcal{O}}
\def \cQ{\mathcal{Q}}
\def \cR{\mathcal{R}}
\def \cS{\mathcal{S}}
\def \cU{\mathcal{U}}
\def \cT{\mathcal{T}}
\def \P{\mathsf P}
\def \E{\mathsf E}
\def \N{\mathbb{N}}
\def \R{\mathbb{R}}
\def \F{\mathbb F}
\def \ud{\mathrm{d}}
\def \e{\mathrm{e}}
\newcommand{\eps}{\varepsilon}
\newtheorem{theorem}{Theorem}[section]
\newtheorem{lemma}[theorem]{Lemma}
\newtheorem{corollary}[theorem]{Corollary}
\newtheorem{proposition}[theorem]{Proposition}
\newtheorem{definition}[theorem]{Definition}
\newtheorem{remark}[theorem]{Remark}
\newtheorem{assumption}[theorem]{Assumption}
\theoremstyle{definition}
\DeclareMathOperator{\sign}{sign}
\title[Saddle point of a stopper vs.\ singular-controller game]{On the saddle point of a zero-sum\\ stopper vs.\ singular-controller game}
\author[Bovo]{Andrea Bovo}
\author[De Angelis]{Tiziano De Angelis}
\subjclass[2020]{91A05, 91A15, 60G40, 93E20, 49J40, 35R35, 60J60, 60J55}
\keywords{zero-sum stochastic games, optimal stopping, singular control, saddle point, free boundary problems, Skorokhod reflection, absorbed and controlled diffusions}
\address{A.\ Bovo: School of Management and Economics, Dept.\ ESOMAS, University of Torino, Corso Unione Sovietica, 218 Bis, 10134, Torino, Italy.}
\email{\href{mailto:andrea.bovo@unito.it}{andrea.bovo@unito.it}}
\address{T.\ De Angelis: School of Management and Economics, Dept.\ ESOMAS, University of Torino, Corso Unione Sovietica, 218 Bis, 10134, Torino, Italy; Collegio Carlo Alberto, Piazza Arbarello 8, 10122, Torino, Italy.}
\email{\href{mailto:tiziano.deangelis@unito.it}{tiziano.deangelis@unito.it}}
\date{\today}
\numberwithin{equation}{section}
\begin{document}

\begin{abstract}
We construct a saddle point in a class of zero-sum games between a stopper and a singular-controller. The underlying dynamics is a one-dimensional, time-homogeneous, singularly controlled diffusion taking values either on $\R$ or on $[0,\infty)$. The games are set on a finite-time horizon, thus leading to analytical problems in the form of parabolic variational inequalities with gradient and obstacle constraints. 

The saddle point is characterised in terms of two moving boundaries: an optimal stopping boundary and an optimal control boundary. These boundaries allow us to construct an optimal stopping time for the stopper and an optimal control for the singular-controller. Our method relies on a new link between the value function of the game and the value function of an auxiliary optimal stopping problem with absorption. We show that the smooth-fit condition at the stopper's optimal boundary (in the game), translates into an absorption condition in the auxiliary problem. This is somewhat in contrast with results obtained in problems of singular control with absorption and it highlights the key role of smooth-fit in this context. 
\end{abstract}

\maketitle
\section{Introduction}

Zero-sum stochastic games between a singular-controller and a stopper appeared relatively recently in work by Hernandez-Hernandez and co-authors (\cite{hernandez2015zero}, \cite{hernandez2015zsgsingular}), in diffusive settings. While various economic applications are discussed in the introductions of those papers, from a purely mathematical perspective these games are natural extensions of two important branches of the stochastic control literature. On the one hand, they extend controller-stopper games with classical controls (i.e., controls with bounded velocity) to setups with singular controls (i.e., controls with increments that can be singular with respect to the Lebesgue measure in time). On the other hand, they generalise the classical, single-agent, irreversible (partially reversible) investment problem with singular controls to games with stopping.

Controller-stopper games with classical controls have attracted significant attention, starting with seminal work in discrete-time by Maitra and Sudderth \cite{maitra1996gambler}, later extended to continuous-time setting by Karatzas and Sudderth \cite{karatzas2001controller}. Existence of a value for the game and various characterisations of saddle points have been obtained with different methods spurring the development of analytical techniques (see, e.g., Bensoussan and Friedman \cite{bensoussan1974nonlinear} and, for nonzero-sum games, \cite{bensoussan1977nonzero}), dynamic programming and viscosity theory (e.g., Bayraktar and Huang \cite{bayraktar2013controller}), martingale methods (e.g., Karatzas and Zamfirescu \cite{karatzas2008martingale}) and BSDEs (e.g., Hamadene \cite{hamadene2006stochastic}). Recently, an example of nonzero-sum controller-stopper game with partial and asymmetric information was solved explicitly by Ekstr\"om et al.\ \cite{ekstrom2020detect}.

The literature on (single-agent) irreversible investment is extremely vast and a complete overview is outside the scope of this introduction. Some historical remarks can be found at the end of Chapter VIII in \cite{fleming2006controlled}. Early analytical results for problems of singular control are due to Evans \cite{evans1979second}, Chow, Menaldi and Robin \cite{chow1985additive}, and Menaldi and Taksar \cite{menaldi1989optimal}. Between the 1980s and early 2000s significant contributions to the theory were made by Haussmann and co-authors (e.g., \cite{chiarolla2000inflation}, \cite{chiarolla2009irreversible}, \cite{haussmann1995singular}), Karatzas and co-authors (e.g., \cite{karoui1991new}, \cite{karatzas1984bridge1}, \cite{karatzas1985bridge2}), Soner and Shreve (e.g., \cite{soner1989regularity}, \cite{soner1991free}), among others. This strand of the literature is still very much alive to these days. 

For singular-controller vs.\ stopper (zero-sum) games, the early work of Hernandez-Hernandez and co-authors \cite{hernandez2015zero,hernandez2015zsgsingular} covers one-dimensional diffusive settings with infinite-time horizon. Optimal strategies for both players (and saddle points) are obtained in closed form in terms of {\em stopping boundaries} and {\em control boundaries}. Due to the one-dimensional state-space, such boundaries are points on the real line and they can be determined thanks to an educated {\em a priori} guess about the structure of the players' optimal strategies. Ekstr\"om et al. \cite{ekstrom2023finetti} also considered a nonzero-sum singular-controller vs.\ stopper game, motivated by the classical dividend problem with competition and partial information. They construct an explicit equilibrium using, once again, an educated {\em a priori} guess about the structure of the players' optimal (mixed) strategies. 

The {\em guess-and-verify} approach becomes unfeasible in essentially all multi-dimensional situations as, e.g., two-dimensional diffusive or finite-time horizon games. Motivated by the interest for a more flexible, multi-dimensional framework, in recent work with our co-authors \cite{bovo2023c,bovo2022variational,bovo2023b,bovo2023} we have been developing a systematic approach to the study of zero-sum singular-controller vs.\ stopper games. In our first paper, \cite{bovo2022variational}, we obtained existence of the value of the game as solution in a suitable Sobolev space of a system of variational inequalities with obstacle and gradient constraints on $[0,T]\times\R^d$. We also proved existence of an optimal stopping time for the stopper, which is expressed (in an implicit way) in terms of the game's value evaluated along trajectories of the controlled dynamics. Those results are extended to problems with state-space equal to $[0,T]\times[0,\infty)$ in \cite{bovo2023c}. In that case, additional boundary conditions at zero need to be specified and crucial estimates from \cite{bovo2022variational} need to be derived from scratch with probabilistic methods. In \cite{bovo2023b} and \cite{bovo2023} we extend the existence of the value to two ``{\em degenerate}'' situations: in \cite{bovo2023} the control acts only in a subset of the directions of the state-dynamics and, in \cite{bovo2023b}, the controlled state-coordinates have totally degenerate dynamics (i.e., non-diffusive). In those cases we lose the solvability of the variational inequality in the Sobolev class but we maintain the existence of an optimal stopping time. So far we have been lacking any information about the existence and structure of an optimal control and, therefore, of a saddle point. Moreover, the characterisation of the optimal stopping time is rather abstract, because it involves knowledge of the value function, which is generally difficult to calculate.

In this paper we construct a saddle point (an {\em equilibrium}) in a class of games on finite-time horizon. The underlying controlled dynamics is one-dimensional and time-homogeneous, taking values in either $\R$ or $[0,\infty)$. We provide a characterisation of the players' optimal strategies in terms of two time-dependent optimal boundaries which split the state space into three regions: a {\em continuation/inaction} set, where neither player should act, a {\em stopping} set, $\cS$, where it is optimal for the stopper to end the game, and an {\em action} set, $\cM$, where the singular-controller must act. Moreover, we are able to {\em quantify} the amount of control exerted at equilibrium: the optimal control is constructed via Skorokhod reflection of the controlled dynamics at the boundary of the action set. 

The key steps of our approach are as follows. Starting from the analytical characterisation of the value function $v$ obtained in \cite{bovo2023c,bovo2022variational} we are able to perform an additional probabilistic analysis of the game's value that yields finer properties as, e.g., spatial convexity and monotonicity of $v$, and existence of a time-dependent, non-decreasing optimal stopping boundary. From that we are able to obtain a variational characterisation (in the sense of distributions) for the spatial derivative $v_x$ of the game's value. Then we establish a new probabilistic representation of $v_x$ as the value function of an auxiliary optimal stopping problem (not a game) with absorption, on a suitably chosen underlying dynamics. Thanks to such probabilistic result we can characterise the boundary of the action set $\cM$ as a time-dependent, non-decreasing boundary. That is needed to justify the existence of an optimal control (via its direct construction). Along the way we produce auxiliary results of independent interest as, for example, continuity of the optimal boundaries. We notice that when we work with controlled processes living on $[0,\infty)$ we also encounter fine technical difficulties, related to the fact that the class of admissible controls depends on the initial position of the controlled process (see, e.g., Section \ref{sec:actionstopping}).

Besides the construction of a saddle point, from a mathematical point of view we contribute a new link between our class of zero-sum games and optimal stopping problems with absorption. Such link shows somewhat unexpected features which we now illustrate. 
It is optimal for the stopper to end the game when the controlled diffusion drops below a boundary $t\mapsto a(t)$, for any admissible control of the controller's choosing. Then, from the point of view of the controller the problem can be reduced to a singular control problem with absorption of the controlled diffusion at the boundary $a$. It then appears that the controller's problem belongs in a broad sense to the same class as the celebrated {\em dividend problem} (see, e.g., Radner and Shepp \cite{radner1996risk} and Jeanblanc and Shiryaev \cite{jeanblanc1995optimization}) but with a more complex structure of the cost function, with finite-time horizon and with absorption along a moving bounday, rather than a constant one. Following De Angelis and Ekstr\"om \cite{de2017dividend}, it would then be tempting to argue that $v_x$ should be the value of an optimal stopping problem involving an uncontrolled reflecting diffusion $(Z_t)_{t\ge 0}$, with reflection at the boundary $a$, and with a gain function featuring an exponential of the local time of $Z$ at the boundary $a$. As it turns out this is not the case here. Instead, the underlying diffusion in the auxiliary problem is not reflected but it is absorbed at the boundary $a$ and no local time appears. We explain this phenomenon by noticing that in the classical dividend problem the absorption occurs at an exogenously specified boundary. Instead, in our game, the stopping/absorption boundary is determined endogenously as the optimal stopping boundary for the stopper. Such optimality induces smooth-fit, i.e., continuity of $(t,x)\mapsto v_x(t,x)$ at the boundary $a$. The latter smoothness is lacking in the absorption boundary of the dividend problem, thus leading to different boundary conditions for the spatial derivative of the corresponding value function (cf.\ \cite[Eq.\ (7)]{de2017dividend}).

The paper is organised as follows. In Section \ref{sec:setting} we set up the game, make standing assumptions and recall useful results from \cite{bovo2023c,bovo2022variational}. In Section \ref{sec:actionstopping} we prove existence of an optimal stopping boundary along with its monotonicity and right-continuity. We also prove existence of a (candidate) optimal control boundary. In Section \ref{sec:aux} we obtain the probabilistic representation of the derivative $v_x$ of the game's value. We analyse the auxiliary optimal stopping problem and deduce monotonicity and left-continuity of the (candidate) control boundary. In Section \ref{sec:saddle} we construct the saddle point ({\em equilibrium}) in the game, thus also confirming optimality of the candidate control boundary. In Section \ref{sec:further} we show continuity of the optimal boundaries under some mild additional assumptions on the problem data and, finally, in the Appendix we collect some important technical results.

\section{Problem setting}\label{sec:setting}

Let $(\Omega,\cF,\P)$ be a probability space equipped with a filtration $\F=(\cF_t)_{t\geq0}$ and let $(W_t)_{t\geq0}$ be a one-dimensional Brownian motion adapted to the filtration. Fix a time horizon $T\in(0,\infty)$. The uncontrolled stochastic dynamics $(X^0_t)_{t\in[0,T]}$ is the unique (non-explosive) $\F$-adapted solution of a one-dimensional stochastic differential equation (SDE):
\begin{align}\label{eq:SDE0}
X_t^{0}=x+\int_0^t\mu(X_s^{0}) \ud s+ \int_0^t\sigma(X_s^{0})\ud W_s,\quad t\in[0,T].
\end{align}
The process $X^0$ evolves in an open unbounded domain $\cO$. In particular, we assume that either $\cO=\R$ or $\cO=(0,\infty)$. With a slight abuse of notation we use $\overline\cO$ to indicate $[0,\infty)$ when $\cO=(0,\infty)$ but with the convention $\overline \cO=\R$ when $\cO=\R$. Precise assumptions on the functions $\mu:\overline{\cO}\to\R$ and $\sigma:\overline\cO\to[0,\infty)$ are stated later in Assumption \ref{ass:1a}. The notations $[\![\rho,\tau]\!]$ and $(\!(\rho,\tau)\!)$ are used throughout the paper to indicate, respectively, open and closed random-time intervals for any two random times $\rho\le \tau$, $\P$-a.s.

Admissible controls will be drawn from the class of finite variation c\`adl\`ag processes $\nu$ with Jordan decomposition $\nu=\nu^+-\nu^-$. More precisely, for $(t,x)\in[0,T]\times\cO$ the class of admissible controls is defined as
\begin{align*}
\cA_{t,x}\coloneqq \left\{\nu \left|
\begin{aligned}\,&\,\nu_s=\nu_s^+-\nu_s^-\text{ for all $s\geq 0$ where $(\nu_s^+)_{s\geq0}$ and $(\nu_t^-)_{t\geq 0}$ are $\F$-adapted,}\\
\,&\text{real-valued, non-decreasing, c\`adl\`ag, with $\nu_{0-}^\pm=0$ and $\E[|\nu_{T-t}^\pm|^2]<\infty$;}\\
\,&\text{If $\cO=(0,\infty)$, then $X^\nu_{\tau_\cO}=0$, $\P$-a.s.\ on $\{\tau_\cO<T-t\}$;}
\end{aligned} 
\right. \right\}\!,
\end{align*}
where the controlled dynamics is defined as follows: letting $\tau_\cO\coloneqq\inf\{s\ge 0:X^\nu_s\notin\cO\}\wedge(T-t)$, the process $(X_s^\nu)_{s\in[0,T-t]}$ is the unique $\F$-adapted solution of
\begin{align}\label{eq:SDE}
\begin{aligned}
X_s^{\nu}&=x+\int_0^s\mu(X_v^{\nu}) \ud v+ \int_0^s\sigma(X_v^{\nu})\ud W_v+\nu_s,\quad s\in[\![0,\tau_\cO]\!],\\
\end{aligned}
\end{align}
with $X_s^\nu=0$, for $s\in(\!(\tau_\cO,T-t]\!]$ (with the convention $(a,a]=\varnothing$).
Notice that $X^\nu_{0-}=x\in\cO$ and sometimes we will use the notation $X^{\nu;x}$ to indicate the starting point of the process $X^{\nu}$. When $\cO=(0,\infty)$, admissible controls depend on $x$ via the exit time $\tau_\cO=\tau_{\cO}(\nu;t,x)$, which depends on the choice of control $\nu$ and on the initial position $X^\nu_{0-}=x$. The requirement $X^\nu_{\tau_\cO}=0$, $\P$-a.s.\ on $\{\tau_\cO<T-t\}$ is natural in applications (e.g., in the dividend problem \cite{de2017dividend}) in order to guarantee that a jump in the control cannot exceed the available resources (i.e., $X^\nu_{\tau_\cO-}+\Delta\nu_{\tau_\cO}=0$ on the event $\{\tau_\cO<T-t\}$). Moreover, although the uncontrolled dynamics evolves in $(0,\infty)$, the controlled dynamics $(X^\nu_{s\wedge\tau_\cO})_{s\ge 0}$ should be considered on $[0,\infty)$.
 Finally, the class of admissible stopping times is given by
\begin{align*}
\cT_t\coloneqq\{\tau:\tau\text{ is $\F$-stopping time with $\tau\in[0,T-t]$, $\P$-a.s.}\}.
\end{align*}

Fix $(t,x)\in[0,T]\times \cO$ and an admissible pair $(\tau,\nu)\in\cT_t\times\cA_{t,x}$. To account for the dependence of $X^\nu$ on its starting point $X^\nu_{0-}=x$ we write $\P_x(\,\cdot\,)\coloneqq\P(\,\cdot\,|X^\nu_{0-}=x)$ and $\E_x[\,\cdot\,]\coloneqq\E[\,\cdot\,|X^\nu_{0-}=x]$. We consider an expected payoff of the form 
\begin{align}\label{eq:payoff}
\quad\cJ_{t,x}(\tau,\nu)\coloneqq\E_x\Big[\e^{-r(\tau\wedge\tau_\cO)}g(t+\tau\wedge\tau_\cO)+\int_{0}^{\tau\wedge\tau_\cO} \!\e^{-rs}h(t+s,X_s^{\nu})\,\ud s+\int_{[0,\tau\wedge\tau_\cO]}\!\e^{-rs}\alpha_{0}\,\ud |\nu|_s\Big]
\end{align}
where $r\geq0$ and $\alpha_0>0$ are given constants, $g:[0,T]\to \R$, $h:[0,T]\times\overline\cO\to \R$ are suitable functions and $|\nu|_s$ denotes the total variation measure of $\nu$ on the interval $[0,s]$. The function $\cJ_{t,x}$ is the expected payoff of a zero-sum game in which the {\em maximiser} picks a stopping time $\tau$ and the {\em minimiser} picks an admissible control $\nu$. As usual we introduce the upper and lower value of the game, denoted $\overline v$ and $\underline v$, respectively, and defined as 
\[
\overline v(t,x)=\adjustlimits\inf_{\nu\in\cA_{t,x}}\sup_{\tau\in\cT_t}\cJ_{t,x}(\tau,\nu)\quad\text{and}\quad \underline v(t,x)=\adjustlimits\sup_{\tau\in\cT_t}\inf_{\nu\in\cA_{t,x}}\cJ_{t,x}(\tau,\nu).
\] 
In general, $\underline v\le\overline v$ but we will soon put ourselves under assumptions that guarantee existence of a value for the game, i.e., $\underline v=\overline v\eqqcolon v$.
We are interested in the analysis of the optimal strategies of both players in the sense of the next definition.
\begin{definition}\label{def:NE}
Fix $(t,x)\in[0,T]\times\cO$. An admissible pair $(\tau_*,\nu^*)\in\cT_t\times\cA_{t,x}$ is a saddle point for the game with payoff \eqref{eq:payoff}, evaluated at $(t,x)$, if
\begin{align*}
\cJ_{t,x}(\tau,\nu^*)\le\cJ_{t,x}(\tau_*,\nu^*)\le \cJ_{t,x}(\tau_*,\nu),\quad\text{for all $(\tau,\nu)\in\cT_t\times\cA_{t,x}$}.
\end{align*}
\end{definition}

The main assumptions of the paper are summarised here.
\begin{assumption}[{\bf The dynamics}]\label{ass:1a}
The following holds:
\begin{itemize}
\item[ i)] The function $\mu$ has linear growth, it is continuously differentiable and convex. Its derivative $\mu_x$ is locally $\gamma$-H\"older continuous, for some $\gamma\in(0,1)$ and bounded from above; 

\item[ ii)] When $\cO=(0,\infty)$ we additionally assume $\mu(0)=0$ and $\mu_x(0)=\lim_{x\downarrow 0}\mu_x(x)$ finite (hence $\mu$ is Lipschitz);

\item[iii)] The function $\sigma$ is such that $\sigma(x)=\sigma_0$ when $\cO=\R$, and $\sigma(x)=\sigma_1 x$ when $\cO=(0,\infty)$, for some $\sigma_0,\sigma_1\in(0,\infty)$.
\end{itemize} 
\end{assumption}

\begin{assumption}[\bf Game's payoffs]\label{ass:1}
For some $\gamma\in(0,1)$ and $c>0$ the functions $g:[0,T]\to\R$ and $h:[0,T]\times\overline\cO\to\R$ satisfy:
\begin{itemize}
	\item[ i)] The function $g$ is continuously differentiable with derivative $\dot g$ which is $\gamma$-H\"older continuous.  	
	\item[ ii)] The function $h$ is $\gamma$-H\"older continuous on any compact with $|h(t,x)|\le c(1+|x|^2)$; the spatial derivative $h_x$ is $\gamma$-H\"older continuous on any compact and non-negative, with $x\mapsto h_x(t,x)$ non-decreasing, $t\mapsto h_x(t,x)$ non-increasing and $|h_x(t,x)|\le c(1+|x|^2)$.
\end{itemize}
Letting 
\begin{equation}\label{eq:Theta}
\Theta(t,x)\coloneqq \dot g(t)-rg(t)+h(t,x),\quad \text{for $(t,x)\in[0,T]\times\overline\cO$},
\end{equation}
we further assume 
\begin{itemize}
\item[iii)] $t\mapsto \Theta(t,x)$ is non-increasing for all $x\in\overline\cO$ and, when $\cO=(0,\infty)$, $\Theta(0,0)< 0$;
\item[iv)] There is $K_1>0$ such that 
\[
\inf_{(t,x)\in[0,T]\times\cO}\Theta(t,x)= -K_1;
\]
\item[v)] For all $t\in[0,T)$ we have $\sup_{x\in\cO}\Theta(t,x)>0$. 
\end{itemize}
\end{assumption}
Notice that (i) and (iv) actually imply that $h$ is bounded from below by a constant.

\begin{remark}\label{rem:assv}
Condition $(v)$ in Assumption \ref{ass:1} leads to no loss of generality. 
If there is $t_0\in[0,T)$ such that $\Theta(t_0,x)\le 0$ for all $x\in\cO$, then by monotonicity also $\Theta(t,x)\le 0$ for all $(t,x)\in[t_0,T]\times\cO$. It follows by writing $\e^{-r(\tau\wedge\tau_\cO)}g(t+\tau\wedge\tau_\cO)=g(t)+\int_0^{\tau\wedge\tau_\cO}\! \e^{-rs}(\dot g(t+s)-rg(t+s))\,\ud s$ that 
\[
v(t,x)-g(t)\le \sup_{\tau\in\cT_t}\E_x\Big[\int_0^{\tau\wedge\tau_\cO} \e^{-rs}\Theta(t+s,X^0_s)\ud s\Big]\le 0,\quad\text{for all $(t,x)\in[t_0,T]\times\cO$}.
\]
The latter implies that the stopper ends the game immediately in $[t_0,T]\times\cO$. Thus we can replace the time horizon $T$ by $t_0$.
\end{remark}

Under Assumptions \ref{ass:1a} and \ref{ass:1}, we have from \cite[Thm.\ 3.3]{bovo2022variational} (for $\cO=\R$) and from \cite[Thm.\ 2.3]{bovo2023c} (for $\cO=(0,\infty)$) that the function
\begin{align}\label{eq:game}
v(t,x)=\adjustlimits\sup_{\tau\in \cT_t}\inf_{\nu\in\cA_{t,x}}\cJ_{t,x}(\tau,\nu)=\adjustlimits\inf_{\nu\in\cA_{t,x}}\sup_{\tau\in \cT_t}\cJ_{t,x}(\tau,\nu)
\end{align} 
is well-defined, i.e., the order of supremum and infimum is exchangeable. 
\smallskip

From now on we will make use of various function spaces (e.g., H\"older and Sobolev spaces) for functions defined on $[0,T]\times\cO$. As usual the subscript `$\ell oc$' which appears for example in $L^p_{\ell oc}([0,T]\times\cO)$ is used to denote a class of functions belonging to $L^p(K)$ for any compact $K\subseteq[0,T]\times\cO$.
The infinitesimal generator of the uncontrolled process $X^{0}$ reads
\begin{align*}
(\cL\varphi)(y)=\frac{\sigma^2(y)}{2}\varphi_{xx}(y)+\mu (y)\varphi_{x}(y),\quad\text{for any }\varphi\in C^{2}(\R),
\end{align*}
where $\varphi_x$ and $\varphi_{xx}$ denote the first and second order (spatial) derivatives of the function $\varphi$, respectively. 
It is shown in \cite[Thm.\ 3.3]{bovo2022variational} and in \cite[Thm.\ 2.3]{bovo2023c} that $v\in W^{1,2;p}_{\ell oc}([0,T]\times\cO)$ for all $p\in[1,\infty)$, where $W^{1,2;p}_{\ell oc}$ is the Sobolev space of functions with $\partial_t v,v_x,v_{xx}\in L^p_{\ell oc}([0,T]\times\cO)$. Moreover, $v$ is the maximal solution of the system of variational inequalities
\begin{align}\label{eq:varine}
\begin{aligned}
&\max\big\{\min\big\{\partial_tv+\cL v-rv + h,\alpha_{0} - |v_x|\big\},g-v\big\}=0,\quad\text{a.e.\ $[0,T]\times\cO$},\\
&\min\big\{\max\big\{\partial_tv+\cL v-rv + h,g-v\big\},\alpha_{0}-|v_x|\big\}=0,\quad\text{a.e.\ $[0,T]\times\cO$},
\end{aligned}
\end{align}
with terminal condition $v(T,x)=g(T)$ for all $x\in\cO$, with $|v(t,x)|\le c(1+|x|^2)$ for some $c>0$, and with boundary condition $v(t,0)=g(t)$ for $t\in[0,T]$ when $\cO=(0,\infty)$. By Sobolev embedding the functions $v$ and $v_x$ are locally $\gamma$-H\"older continuous with respect to the parabolic distance (\cite[eq.
(E.9)]{fleming2012deterministic} or \cite[Exercise 10.1.14]{krylov2008lectures}), i.e., $v\in C^{0,1;\gamma}_{\ell oc}([0,T]\times \cO) $ for all $\gamma\in(0,1)$, where the space $C^{0,1;\gamma}$ is defined as in \cite[p.\ 46]{lieberman1996second}. It is shown in \cite[Thm.\ 2.3]{bovo2023c} that when $\cO=(0,\infty)$ the function $v$ is continuous up to and including the boundary $[0,T]\times\{0\}$ of $[0,T]\times\cO$. 
Finally, the stopping time $\tau_*=\tau_*(\nu;t,x)$ defined as
\begin{align}\label{eq:tau*}
\tau_*\coloneqq\inf\big\{s\ge0: \min\big[v(t+s,X_s^{\nu;x}),v(t+s,X_{s-}^{\nu;x})\big]\leq g(t+s)\big\}\wedge(T-t)
\end{align}
is optimal for the stopper in the sense that 
\begin{align}\label{eq:opttau}
v(t,x)=\inf_{\nu\in\cA_{t,x}}\cJ_{t,x}(\tau_*(\nu),\nu).
\end{align}

Motivated by the optimality of $\tau_*$ we introduce the so-called {\em continuation set} for the stopper:
\begin{align}\label{eq:C}
\cC\coloneqq\{(t,x)\in[0,T]\times\cO\,:\,v(t,x)>g(t)\}.
\end{align}
Likewise, the gradient constraint in the system of variational inequalities \eqref{eq:varine} motivates the introduction of the so-called {\em inaction set} for the controller:
\begin{align}\label{eq:I}
\cI\coloneqq\{(t,x)\in[0,T]\times\cO\,:\,|v_x(t,x)|<\alpha_0\}.
\end{align}
The complementary sets to $\cC$ and $\cI$ are denoted by $\cS=\cC^c$ ({\em stopping set}) and $\cM=\cI^c$ ({\em action set}). 

It will be shown in the next sections that indeed it is optimal for the controller to only exert control when the state dynamics $(t+s,X^{\nu;x}_s)$ visits $\cM$. Notice that by construction $\cC\cap(\{T\}\times\cO)=\varnothing$ because $v(T,x)=g(T)$ for all $x\in\cO$. Instead, by the same argument, $\cI\cap(\{T\}\times\cO)=\{T\}\times\cO$ because $v_x(T,x)=0$ for all $x\in\cO$. 

\begin{remark}\label{rem:ass}
In addition to the assumptions from \cite{bovo2023c,bovo2022variational}, in our Assumption \ref{ass:1a} we specify assumptions on the coefficients of the SDE that will be used to show convexity in the space variable of the value function $v$. 
In particular, Assumption \ref{ass:1a} implies that the boundary of the set $\cO$ is non-attainable in finite time by the uncontrolled dynamics $X^0$ (hence the null control $\nu\equiv 0$ is admissible). That can be seen as follows
\begin{itemize}
\item When $\cO=\R$, linear growth of $\mu$ and $\sigma$ implies $X^0_t\in\cO$ for all $t\in[0,T]$, $\P$-a.s. 
\item When $\cO=(0,\infty)$, linear growth of $\mu$ and $\sigma$ implies $X^0_t<\infty$ for all $t\ge 0$, $\P$-a.s. Convexity of $\mu$ and $\mu(0)=0$ imply $\mu(x)\ge \mu_x(0)x$ for all $x\ge 0$; then, for all $t\ge 0$,
\[
0<x\exp(\sigma_1 W_t+(\mu_x(0)-\sigma^2_1/2)t)\le X^0_t<\infty.
\]
\end{itemize} 

When $\cO=\R$ the square integrability of the admissible controls, jointly with Assumption \ref{ass:1a}, imply also that $X^\nu_t\in\R$ for all $t\in[0,T]$ and any $\nu\in\cA_{0,x}$ (hence $\tau_\cO=T$). Instead, when $\cO=(0,\infty)$ the condition $X^\nu_{\tau_\cO}=0$ on the event $\{\tau_\cO<T\}$ must be part of the admissibility requirement for $\nu$ (i.e., the control cannot make the process jump strictly below zero). Finally, the fact that $\mu(0)=\sigma(0)=0$ when $\cO=(0,\infty)$, means that we can formally extend the controlled dynamics $(X^\nu_s)_{s\in[0,T]}$ beyond time $\tau_\cO$ by assuming for $s\in[\![\tau_\cO,T]\!]$, $\nu_{s}-\nu_{\tau_\cO}=0$ so that $X^\nu_{s}=0$. This is done without further mentioning in the rest of the paper (e.g., in the proof of Lemma \ref{lem:convexity}).
\end{remark}

\begin{remark}\label{rem:benchm}
A benchmark example with $\cO=(0,\infty)$, that combines Assumptions \ref{ass:1a} and \ref{ass:1}, is the following: take $g(t)\equiv 0$, $h(t,x)=\kappa_1 x^2-\kappa_2$ for some $\kappa_1,\kappa_2>0$, $\mu(x)=\mu x$ and $\sigma(x)=\sigma x$ for some constants $\mu\in\R$ and $\sigma>0$. Several other forms of the drift and of the payoff functions are of course possible.
\end{remark}

\section{Action and stopping boundaries}\label{sec:actionstopping}

Here we perform an analysis of the game's value function which leads to an initial characterisation of the sets $\cC$ and $\cI$. 
We will establish spatial monotonicity of $v$ and $v_x$ which then yields existence of two time-dependent boundaries for $\cC$ and $\cI$. First we prove $v_x\ge 0$.\begin{proposition}\label{prop:vincr}
The mapping $x\mapsto v(t,x)$ is non-decreasing for every $t\in[0,T]$.
\end{proposition}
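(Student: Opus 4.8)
The plan is to compare the game started at two points $x_1<x_2$ of $\cO$ (the case $x_1=x_2$ being trivial) by a pathwise coupling of the controlled dynamics \eqref{eq:SDE}. In view of the representation $v(t,x)=\inf_{\nu\in\cA_{t,x}}\sup_{\tau\in\cT_t}\cJ_{t,x}(\tau,\nu)$ (see \eqref{eq:game}), it is enough to produce, for each $\nu\in\cA_{t,x_2}$, a control $\tilde\nu\in\cA_{t,x_1}$ with $\sup_{\tau\in\cT_t}\cJ_{t,x_1}(\tau,\tilde\nu)\le\sup_{\tau\in\cT_t}\cJ_{t,x_2}(\tau,\nu)$; taking the infimum over $\nu\in\cA_{t,x_2}$ then yields $v(t,x_1)\le v(t,x_2)$.

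If $\cO=\R$, the class $\cA_{t,x}$ does not depend on $x$, so I would take $\tilde\nu=\nu$ and solve \eqref{eq:SDE} from $x_1$ and from $x_2$ driven by the same Brownian motion and control. Because $\sigma$ is constant, the difference $X^{\nu;x_2}-X^{\nu;x_1}$ solves a random linear ODE (the $\sigma\,\ud W$ and $\ud\nu$ contributions cancel), so it stays strictly positive, i.e.\ $X^{\nu;x_1}_s\le X^{\nu;x_2}_s$ for all $s$ (the classical comparison principle). Since $\tau_\cO\equiv T-t$ here and the cost term $\int\e^{-rs}\alpha_0\,\ud|\nu|_s$ is the same in both payoffs, while $x\mapsto h(t,x)$ is non-decreasing (Assumption~\ref{ass:1}) and $g$ is $x$-independent, one obtains $\cJ_{t,x_1}(\tau,\nu)\le\cJ_{t,x_2}(\tau,\nu)$ for every $\tau\in\cT_t$, which suffices.

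If $\cO=(0,\infty)$, the admissible class genuinely depends on $x$ through the state constraint $X^\nu\ge 0$ and the absorption time $\tau_\cO$, so $\tilde\nu$ must be a modification of $\nu$ near the boundary. I would let $\tilde\nu$ agree with $\nu$ up to the first time $\rho$ at which the trajectory $X^{\tilde\nu;x_1}$ reaches $0$---truncating the possible downward jump of $\nu$ at $\rho$ so that $X^{\tilde\nu;x_1}_\rho=0$---and then freeze $\tilde\nu$ on $[\rho,T-t]$, so that $X^{\tilde\nu;x_1}$ is absorbed at $0$ from time $\rho$ onwards. On $[0,\rho)$ the comparison principle (using that both $\mu$ and $\sigma(x)=\sigma_1x$ are Lipschitz on $[0,\infty)$) gives $0<X^{\tilde\nu;x_1}_s\le X^{\nu;x_2}_s$; the truncation places $X^{\tilde\nu;x_1}_\rho=0\le X^{\nu;x_2}_\rho$; and $X^{\tilde\nu;x_1}_s=0\le X^{\nu;x_2}_s$ for $s\ge\rho$. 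One then checks that $\tilde\nu\in\cA_{t,x_1}$ (it is non-decreasing, c\`adl\`ag, square-integrable because $\tilde\nu^\pm\le\nu^\pm$, keeps $X^{\tilde\nu;x_1}\ge0$, and is frozen after $\tau_\cO(\tilde\nu;t,x_1)=\rho$), that $\ud|\tilde\nu|\le\ud|\nu|$ as measures, and that $X^{\tilde\nu;x_1}\le X^{\nu;x_2}$ forces $\rho\le\tau_\cO(\nu;t,x_2)$. Comparing then $\cJ_{t,x_1}(\tau,\tilde\nu)$ with $\cJ_{t,x_2}(\tau\wedge\rho,\nu)$, both games effectively stop at $\tau\wedge\rho$: the terminal terms coincide, the running terms are ordered because $h$ is non-decreasing in $x$ and $X^{\tilde\nu;x_1}\le X^{\nu;x_2}$, and the cost terms are ordered because $\ud|\tilde\nu|\le\ud|\nu|$. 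Hence $\cJ_{t,x_1}(\tau,\tilde\nu)\le\cJ_{t,x_2}(\tau\wedge\rho,\nu)\le\sup_{\tau'\in\cT_t}\cJ_{t,x_2}(\tau',\nu)$, and taking the supremum over $\tau$ gives the desired inequality.

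The delicate part is the case $\cO=(0,\infty)$: one must construct the truncated-and-frozen control $\tilde\nu$ rigorously (a Skorokhod-type argument, bearing in mind that absorption at $0$ is irreversible, so the control cannot revive the process), and check that the ordering $0\le X^{\tilde\nu;x_1}\le X^{\nu;x_2}$ survives the truncated downward jump. The remaining ingredients---the comparison estimate for the diffusion, the admissibility verification, and the monotone ordering of the three terms of the payoff \eqref{eq:payoff}---are routine under Assumptions~\ref{ass:1a} and~\ref{ass:1}.
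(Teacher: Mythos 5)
Your proof is correct, and the overall logic (produce for each admissible control $\nu$ from the larger point a dominated control $\tilde\nu$ from the smaller point, then take infima) is the same as the paper's. But the coupling construction is genuinely different, so let me compare.

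The paper does not use a comparison theorem for the controlled SDE. Starting from $y<x$, it lets the uncontrolled process $X^{0;y}$ run untouched and defines $\bar\tau$ as the first time the controlled process $X^{\nu^\eps;x}$ drops down to $X^{0;y}$; only from $\bar\tau$ on does the new control $\bar\nu^\eps$ act, with a (truncated) jump to merge the trajectory of $X^{\bar\nu^\eps;y}$ with that of $X^{\nu^\eps;x}$, after which the two controls move in lockstep. The pathwise ordering $X^{\bar\nu^\eps;y}\le X^{\nu^\eps;x}$ is then true by construction on both sides of $\bar\tau$, the total variation of $\bar\nu^\eps$ is controlled because $\bar\nu^\eps\equiv 0$ before $\bar\tau$ and its jump at $\bar\tau$ is no larger than that of $\nu^\eps$, and the exit times automatically coincide: $\tau_\cO(\nu^\eps;t,x)=\tau_\cO(\bar\nu^\eps;t,y)$. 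So the paper handles both $\cO=\R$ and $\cO=(0,\infty)$ with one uniform argument and no appeal to SDE comparison. Your coupling uses the \emph{same} control for both starting points, which is more intuitive, but then requires (i) a comparison principle for the controlled dynamics (which is fine here because $\sigma$ is constant or linear and the control cancels in the difference process, but it is an extra ingredient) and (ii) a separate truncate-and-freeze modification at the boundary when $\cO=(0,\infty)$. In short: your construction trades the careful choice of the crossing time $\bar\tau$ for a comparison theorem plus an absorption argument. Both are valid under Assumption~\ref{ass:1a}; the paper's route is more robust with respect to the form of $\sigma$ and avoids splitting into two cases.

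One minor inaccuracy worth fixing: in your admissibility check you write that $\tilde\nu$ ``is non-decreasing,'' which it need not be; what you mean (and what you prove, via $\tilde\nu^\pm\le\nu^\pm$) is that each of the Jordan components $\tilde\nu^\pm$ is non-decreasing, c\`adl\`ag and square-integrable, and that $\ud|\tilde\nu|\le\ud|\nu|$ as measures. With that correction, and the delicate jump-truncation step made explicit (showing $\Delta\tilde\nu^-_\rho\le\Delta\nu^-_\rho$), the argument is complete.
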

\begin{proof}
For $t=T$ the claim is trivial because $v(T,x)=g(T)$. Take $x\geq y$ in $\cO$ and $t\in[0,T)$. For any $\eps>0$ we can find $\nu^\eps\in\cA_{t,x}$ such that $v(t,x)\ge \sup_{\tau\in\cT_t}\cJ_{t,x}(\tau,\nu^\eps)-\eps$. Then we set $\bar \tau\coloneqq\inf\{s\ge 0:X^{\nu^\eps;x}_s\le X^{0;y}_s\}\wedge(T-t)$ and define
\[
\bar\nu^\eps_s\coloneqq 1_{\{\bar\tau\le s\}}\big[-\big(X^{0;y}_{\bar\tau}-X^{\nu^\eps;x}_{\bar\tau}\big)^++\nu^\eps_s-\nu^\eps_{\bar\tau}\big].
\]
Clearly $\bar\nu^\eps$ is a c\`adl\`ag process of finite variation.
By construction, the controlled process $X^{\bar\nu^\eps;y}$ is such that $X^{\bar\nu^\eps;y}_s=X^{0;y}_s$ for $s\in[\![0,\bar \tau)\!)$ and $X^{\bar\nu^\eps;y}_s=X^{\nu^\eps;x}_s$ for $s\in[\![\bar \tau,T\!-\!t]\!]$. Hence, $\bar\nu^\eps\in\cA_{t,y}$.

Notice that $\bar \tau\le \tau_\cO(\nu^\eps;t,x)$ and therefore $\tau_\cO(\nu^\eps;t,x)=\tau_\cO(\bar \nu^\eps;t,y)$. In particular, if the process $X^{\nu^\eps;x}$ goes below $X^{0;y}$ with a jump, then $\bar\nu^\eps$ makes a jump of size $X^{0;y}_{\bar\tau}-X^{\nu^\eps;x}_{\bar\tau}>0$ at time $\bar \tau$. It is crucial to notice that: since $\bar\nu^\eps_s=0$ for $s\in[\![0,\bar\tau)\!)$, $\Delta\nu^\eps_{\bar \tau}=X^{\nu^\eps;x}_{\bar\tau}-X^{\nu^\eps;x}_{\bar\tau-}\le X^{\nu^\eps;x}_{\bar\tau}-X^{0;y}_{\bar\tau-}=\Delta\bar\nu^\eps_{\bar\tau}$ (hence $|\Delta\nu^\eps_{\bar \tau}|\ge|\Delta\bar{\nu}^\eps_{\bar \tau}|$) and $\bar\nu^\eps_s-\bar\nu^\eps_{\bar \tau}=\nu^\eps_s-\nu^\eps_{\bar \tau}$ for $s\in[\![\bar\tau,T\!-\!t]\!]$, then
\[
\int_{[0,\tau\wedge\tau_\cO(\nu^\eps;t,x)]}\!\e^{-rs}\alpha_{0}\,\ud |\nu^\eps|_s\ge \int_{[0,\tau\wedge\tau_\cO(\bar \nu^\eps;t,y)]}\!\e^{-rs}\alpha_{0}\,\ud |\bar\nu^\eps|_s, \quad\text{for any $\tau\in\cT_t$}.
\]
Moreover, using $X^{\nu^\eps;x}_s\ge X^{\bar\nu^\eps;y}_s$ for all $s\in[0,T-t]$ and $h_x\ge 0$ (Assumption \ref{ass:1}(ii)), and setting $\tau_\cO=\tau_\cO(\nu^\eps;t,x)$ and $\bar\tau_\cO=\tau_\cO(\bar \nu^\eps;t,y)$ for simplicity, we have
\begin{align*}
v(t,x)&\ge \sup_{\tau\in\cT_t}\cJ_{t,x}(\tau,\nu^\eps)-\eps\\
&=\sup_{\tau\in\cT_t}\E\Big[\e^{-r(\tau\wedge\tau_\cO)}g(t+\tau\wedge\tau_\cO)+\int_{0}^{\tau\wedge\tau_\cO} \!\e^{-rs}h(t+s,X_s^{\nu^\eps;x})\,\ud s+\int_{[0,\tau\wedge\tau_\cO]}\!\e^{-rs}\alpha_{0}\,\ud |\nu^\eps|_s\Big]-\eps\\
&\ge\sup_{\tau\in\cT_t}\E\Big[\e^{-r(\tau\wedge\bar\tau_\cO)}g(t+\tau\wedge\bar\tau_\cO)+\int_{0}^{\tau\wedge\bar\tau_\cO} \!\e^{-rs}h(t+s,X_s^{\bar\nu^\eps;y})\,\ud s+\int_{[0,\tau\wedge\bar\tau_\cO]}\!\e^{-rs}\alpha_{0}\,\ud |\bar\nu^\eps|_s\Big]-\eps\\
&\ge v(t,y)-\eps.
\end{align*}
Since $\eps>0$ was arbitrary, we conclude.
\end{proof}
\begin{remark}
From the proposition above $v_x\ge 0$. Then, also the form or the inaction set $\cI$ can be slightly simplified and \eqref{eq:I} reduces to
\[
\cI=\{(t,x)\in[0,T]\times\cO:v_x(t,x)<\alpha_0\}.
\] 
\end{remark}

Next we prove convexity of $v(t,\cdot)$. For that we are going to use convexity of the state process with respect to its initial position. Our next lemma is a refinement of a result proven in \cite[Lem.\ 3.1]{boetius1998connection}. In \cite{boetius1998connection} it is assumed that the diffusion coefficient $\sigma$ be independent of the state $X$. Here instead we allow linear dependence and we follow a different argument of proof based on Tanaka's formula. More care is also needed when $\cO=(0,\infty)$, because convex combination of admissible controls may not be admissible in that case.
\begin{lemma}\label{lem:convexity}
Fix $t\in[0,T]$. Let $x_1,x_2\in\cO$, $\lambda\in(0,1)$ and set $x_\lambda=\lambda x_1+(1-\lambda)x_2$. Take $\nu^1\in\cA_{t,x_1}$, $\nu^2\in\cA_{t,x_2}$ and define the c\`adl\`ag process of finite variation $\nu^\lambda\coloneqq\lambda\nu^1+(1-\lambda)\nu^2$. Then 
\[
\lambda X^{\nu^1;x_1}_{s\wedge\theta_\lambda}+(1-\lambda) X^{\nu^2;x_2}_{s\wedge\theta_\lambda}\ge X^{\nu^\lambda;x_\lambda}_{s\wedge\theta_\lambda}\text{ for all $s\in[0,T-t]$,}\quad\text{$\P$-a.s.},
\]
where $\theta_\lambda=\tau_\cO(\nu^\lambda;x_\lambda)=\inf\{s\ge 0:X^{\nu^\lambda;x_\lambda}_{s}\notin\cO\}\wedge(T-t)$.
\end{lemma}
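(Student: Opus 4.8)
The plan is to prove directly that $D_s:=Z_s-Y_s\le 0$ for all $s\in[\![0,\theta_\lambda]\!]$, where $Z_s:=X^{\nu^\lambda;x_\lambda}_s$ and $Y_s:=\lambda X^{\nu^1;x_1}_s+(1-\lambda) X^{\nu^2;x_2}_s$. Note $D_{0-}=0$, and since by definition of $\nu^\lambda$ the $\nu$-contributions to $Z$ and to $Y$ coincide, $D$ has no jumps; hence $D$ is a continuous semimartingale solving, for $s\in[\![0,\theta_\lambda]\!]$,
\begin{align*}
D_s=\int_0^s b_u\,\ud u+\int_0^s c_u\,\ud W_u,
\end{align*}
with $b_u=\mu(Z_u)-\lambda\mu(X^{\nu^1;x_1}_u)-(1-\lambda)\mu(X^{\nu^2;x_2}_u)$ and $c_u=\sigma(Z_u)-\lambda\sigma(X^{\nu^1;x_1}_u)-(1-\lambda)\sigma(X^{\nu^2;x_2}_u)$. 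All arguments above lie in $\overline\cO$: indeed $X^{\nu^i;x_i}_u\in\overline\cO$ by admissibility and $Z_u\in\cO$ for $u<\theta_\lambda$.

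Two ingredients drive the estimate. First, $\sigma$ is affine on $\overline\cO$ in both regimes of Assumption \ref{ass:1a}(iii), so $\lambda\sigma(X^{\nu^1;x_1}_u)+(1-\lambda)\sigma(X^{\nu^2;x_2}_u)=\sigma(Y_u)$ and hence $c_u=\sigma(Z_u)-\sigma(Y_u)=\beta D_u$, where $\beta:=0$ if $\cO=\R$ and $\beta:=\sigma_1$ if $\cO=(0,\infty)$. Second, convexity of $\mu$ (Assumption \ref{ass:1a}(ii)) gives $\lambda\mu(X^{\nu^1;x_1}_u)+(1-\lambda)\mu(X^{\nu^2;x_2}_u)\ge\mu(Y_u)$, so $b_u\le\mu(Z_u)-\mu(Y_u)$; since $\mu\in C^1$ with $L:=\sup_{\overline\cO}\mu_x<\infty$, on the event $\{D_u>0\}$ we obtain $b_u\le\big(\int_0^1\mu_x(Y_u+\rho D_u)\,\ud\rho\big)D_u\le L\,D_u^+$.

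Now apply Tanaka's formula to the continuous semimartingale $D$ and evaluate at $s\wedge\theta_\lambda$:
\begin{align*}
D^+_{s\wedge\theta_\lambda}=\int_0^{s\wedge\theta_\lambda}\!1_{\{D_u>0\}}b_u\,\ud u+\int_0^{s\wedge\theta_\lambda}\!1_{\{D_u>0\}}\beta D_u\,\ud W_u+\tfrac12 L^0_{s\wedge\theta_\lambda}(D),
\end{align*}
where $L^0(D)$ is the local time of $D$ at $0$. The local time vanishes: trivially when $\cO=\R$ (as $\beta=0$ and $D$ has finite variation), and when $\cO=(0,\infty)$ because the occupation-time formula gives $L^0_s(D)=\lim_{\eps\downarrow0}\tfrac1\eps\int_0^s 1_{[0,\eps)}(D_u)\beta^2 D_u^2\,\ud u\le\lim_{\eps\downarrow0}\beta^2\eps\,s=0$. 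Using $1_{\{D_u>0\}}b_u\le L D_u^+$ and taking expectations — the stochastic integral being a true martingale since $\E[\sup_{u\le\theta_\lambda}|D_u|^2]<\infty$ by standard a priori bounds for \eqref{eq:SDE} together with the square-integrability built into $\cA_{t,x}$ — we obtain $\E[D^+_{s\wedge\theta_\lambda}]\le L\int_0^s\E[D^+_{u\wedge\theta_\lambda}]\,\ud u$ for all $s\in[0,T-t]$. Gronwall's inequality then forces $\E[D^+_{s\wedge\theta_\lambda}]=0$ for every $s$, so $D_{s\wedge\theta_\lambda}\le0$ $\P$-a.s., and by continuity of $s\mapsto D_{s\wedge\theta_\lambda}$ this holds simultaneously for all $s\in[0,T-t]$, which is the claim.

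I expect the main obstacle to be the diffusion term when $\cO=(0,\infty)$: it is precisely the linearity of $\sigma$ that makes the martingale part of $D$ proportional to $D$, so that its quadratic variation — and hence the local time of $D$ at $0$ — vanishes at the relevant scale and Tanaka's formula closes into a clean Gronwall estimate. This is where the argument genuinely departs from the additive-noise setting of \cite[Lem.\ 3.1]{boetius1998connection}. A secondary delicate point is that $\nu^\lambda$ need not be admissible when $\cO=(0,\infty)$ (indeed $X^{\nu^\lambda;x_\lambda}$ is only defined up to $\theta_\lambda$), which is exactly why the comparison is asserted, and all estimates carried out, only on the stochastic interval $[\![0,\theta_\lambda]\!]$.
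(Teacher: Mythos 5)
Your argument is correct and follows essentially the same route as the paper's: Tanaka's formula applied to $D=X^{\nu^\lambda;x_\lambda}-\big(\lambda X^{\nu^1;x_1}+(1-\lambda)X^{\nu^2;x_2}\big)$, the affine structure of $\sigma$ making the quadratic variation of $D$ proportional to $D^2$ so the local time at $0$ vanishes, convexity of $\mu$ to bound the drift, and Gronwall. The only difference is that you invoke the global one-sided bound $\mu_x\le L$ from Assumption \ref{ass:1a}(ii) to close the Gronwall step directly, whereas the paper first assumes $\mu$ Lipschitz and then removes that via a localization/Fatou argument, which it notes lets the lemma hold under the weaker hypothesis that $\mu$ be only locally Lipschitz with linear growth.
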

The proof is given in Appendix \ref{app:comparison} for completeness.

\begin{remark}\label{rem:contr}
When $\cO=(0,\infty)$, it may occur that $\nu^\lambda\notin\cA_{t,x_\lambda}$. Indeed, with positive probability it may be $X^{\nu^\lambda;x_\lambda}_{\theta_\lambda-}+\Delta\nu^\lambda_{\theta_\lambda}<0$ on the event $\{\theta_\lambda<T-t\}$, if $|\Delta\nu^1_{\theta_\lambda}|+|\Delta\nu^2_{\theta_\lambda}|>0$ is sufficiently large. However, the dynamics for $(X^{\nu^\lambda;x_\lambda}_{s\wedge\theta_\lambda})_{s\in[0,T-t]}$ remains well-defined if we understand it as the solution of \eqref{eq:SDE} on $[\![0,\theta_\lambda)\!)$ and we take $X^{\nu^\lambda;x_\lambda}_s=X^{\nu^\lambda;x_\lambda}_{\theta_\lambda-}+\Delta\nu^\lambda_{\theta_\lambda}$ for $s\in[\![\theta_\lambda,T-t]\!]$.
\end{remark}

\begin{proposition}\label{prop:vconvex}
The mapping $x\mapsto v(t,x)$ 
is convex for every $t\in[0,T]$.
\end{proposition}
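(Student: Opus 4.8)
The plan is to combine the pathwise convexity of the controlled state in its starting point (Lemma~\ref{lem:convexity}) with the monotonicity and convexity of $x\mapsto h(t,x)$ from Assumption~\ref{ass:1}(ii) and with the subadditivity of the total variation. After dealing with the trivial case $t=T$ (where $v(T,\cdot)=g(T)$), I would fix $t\in[0,T)$, $x_1,x_2\in\cO$, $\lambda\in[0,1]$, set $x_\lambda=\lambda x_1+(1-\lambda)x_2$, and for $\eps>0$ pick $\eps$-optimal controls $\nu^i\in\cA_{t,x_i}$ (i.e.\ $\sup_{\tau\in\cT_t}\cJ_{t,x_i}(\tau,\nu^i)\le v(t,x_i)+\eps$), exactly as in the proof of Proposition~\ref{prop:vincr}. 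Writing $\nu^\lambda=\lambda\nu^1+(1-\lambda)\nu^2$ and $\theta_\lambda=\tau_\cO(\nu^\lambda;x_\lambda)$, the goal is to exhibit some $\nu\in\cA_{t,x_\lambda}$ with $\sup_{\tau}\cJ_{t,x_\lambda}(\tau,\nu)\le\lambda v(t,x_1)+(1-\lambda)v(t,x_2)+\eps$; since $v(t,x_\lambda)\le\sup_\tau\cJ_{t,x_\lambda}(\tau,\nu)$, convexity then follows upon letting $\eps\downarrow0$.

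When $\cO=\R$ the process $\nu^\lambda$ is itself admissible and $\theta_\lambda=T-t$, so for each $\tau\in\cT_t$ I would compare $\cJ_{t,x_\lambda}(\tau,\nu^\lambda)$ with $\lambda\cJ_{t,x_1}(\tau,\nu^1)+(1-\lambda)\cJ_{t,x_2}(\tau,\nu^2)$ term by term: the $g$-terms coincide; for the running cost one applies $X^{\nu^\lambda;x_\lambda}_s\le\lambda X^{\nu^1;x_1}_s+(1-\lambda)X^{\nu^2;x_2}_s$ from Lemma~\ref{lem:convexity}, then monotonicity and then convexity of $h(t+s,\cdot)$; for the control cost one uses $\ud|\nu^\lambda|\le\lambda\,\ud|\nu^1|+(1-\lambda)\,\ud|\nu^2|$ as measures together with $\e^{-rs}\alpha_0\ge0$. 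Taking the supremum over $\tau$ closes this case.

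The case $\cO=(0,\infty)$ is where the real work lies, and I expect it to be the main obstacle, for two reasons. First, $\nu^\lambda$ need not be admissible, since its jump at $\theta_\lambda$ may carry $X^{\nu^\lambda;x_\lambda}$ strictly below $0$ (Remark~\ref{rem:contr}). I would repair this by letting $\tilde\nu^\lambda$ agree with $\nu^\lambda$ on $[\![0,\theta_\lambda)\!)$, replacing the jump at $\theta_\lambda$ (when that jump would make the process negative) by the smaller downward jump $-X^{\nu^\lambda;x_\lambda}_{\theta_\lambda-}$ that brings the process exactly to $0$, and freezing $\tilde\nu^\lambda$ on $(\!(\theta_\lambda,T-t]\!]$; since $\mu(0)=\sigma(0)=0$ this gives $X^{\tilde\nu^\lambda;x_\lambda}\equiv0$ after $\theta_\lambda$, so $\tilde\nu^\lambda\in\cA_{t,x_\lambda}$ with $\tau_\cO(\tilde\nu^\lambda;x_\lambda)=\theta_\lambda$, $X^{\tilde\nu^\lambda;x_\lambda}=X^{\nu^\lambda;x_\lambda}$ on $[\![0,\theta_\lambda)\!)$ and $|\Delta\tilde\nu^\lambda_{\theta_\lambda}|\le|\Delta\nu^\lambda_{\theta_\lambda}|\le\lambda|\Delta\nu^1_{\theta_\lambda}|+(1-\lambda)|\Delta\nu^2_{\theta_\lambda}|$. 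The termwise estimates of the previous paragraph, now carried out on $[0,\tau\wedge\theta_\lambda]$, should then yield, for every $\tau\in\cT_t$,
\[
\cJ_{t,x_\lambda}(\tau,\tilde\nu^\lambda)\le\lambda\,\E\big[\mathcal{R}_1\big]+(1-\lambda)\,\E\big[\mathcal{R}_2\big],\qquad \mathcal{R}_i:=\e^{-r(\tau\wedge\theta_\lambda)}g(t+\tau\wedge\theta_\lambda)+\int_0^{\tau\wedge\theta_\lambda}\!\e^{-rs}h(t+s,X^{\nu^i;x_i}_s)\,\ud s+\int_{[0,\tau\wedge\theta_\lambda]}\!\e^{-rs}\alpha_0\,\ud|\nu^i|_s,
\]
keeping in mind that $X^{\nu^i;x_i}_s=0$ for $s\ge\tau_\cO(\nu^i;x_i)$. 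Second, $\tau_\cO(\nu^i;x_i)$ can be strictly smaller than $\tau\wedge\theta_\lambda$, so $\mathcal{R}_i$ is not literally a payoff $\cJ_{t,x_i}(\,\cdot\,,\nu^i)$ — the latter freezes the reward $g$ at time $\tau_\cO(\nu^i;x_i)$. I would reconcile the two via the identity $\e^{-r\beta}g(t+\beta)=g(t)+\int_0^\beta\e^{-rs}(\dot g(t+s)-rg(t+s))\,\ud s$: with $\rho_i:=\tau\wedge\theta_\lambda\wedge\tau_\cO(\nu^i;x_i)$ and using that $\nu^i$ carries no variation and $X^{\nu^i;x_i}\equiv0$ after $\tau_\cO(\nu^i;x_i)$, one obtains $\mathcal{R}_i=\big[\text{integrand of }\cJ_{t,x_i}(\tau\wedge\theta_\lambda,\nu^i)\big]+\int_{\rho_i}^{\tau\wedge\theta_\lambda}\e^{-rs}\Theta(t+s,0)\,\ud s$, and the extra integral is $\le0$ because $\Theta(t+s,0)\le\Theta(0,0)<0$ by Assumption~\ref{ass:1}(iii). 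Hence $\E[\mathcal{R}_i]\le\cJ_{t,x_i}(\tau\wedge\theta_\lambda,\nu^i)\le v(t,x_i)+\eps$, and combining with the display, taking $\sup_{\tau\in\cT_t}$ and then $\eps\downarrow0$, gives $v(t,x_\lambda)\le\lambda v(t,x_1)+(1-\lambda)v(t,x_2)$. The remaining items — square-integrability of the variation of $\tilde\nu^\lambda$, its freezing condition at $\tau_\cO$, and measurability of $\theta_\lambda,\tau_\cO(\nu^i;x_i)$ as stopping times — I would treat as routine.
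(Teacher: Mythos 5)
Your proposal is correct and takes essentially the same route as the paper: you use Lemma~\ref{lem:convexity} for the pathwise state comparison, modify $\nu^\lambda$ at $\theta_\lambda$ to restore admissibility when $\cO=(0,\infty)$ (exactly as the paper does), invoke subadditivity of total variation, exploit monotonicity and convexity of $h$ in $x$ (equivalent to the paper's use of $\Theta$), and settle the mismatch between $\tau_\cO(\nu^i;x_i)$ and $\tau\wedge\theta_\lambda$ via $\Theta(\cdot,0)\le0$ from Assumption~\ref{ass:1}(iii). The only difference is organizational: you insert the intermediate truncated payoffs $\mathcal{R}_i$ and compare in two steps, whereas the paper rewrites all three payoffs as $\Theta$-integrals via Dynkin's formula and splits the resulting integral on the events $\{\tau_1<\tau_2\}$, $\{\tau_1>\tau_2\}$ — but the estimates used are the same.
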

\begin{proof}
For $t=T$ the claim is trivial because $v(T,x)=g(T)$. Let $t\in[0,T)$ and $x_1,x_2\in\cO$. Let also $\lambda\in(0,1)$ and denote $x_\lambda=\lambda x_1+(1-\lambda)x_2$. We only provide the proof in the case $\cO=(0,\infty)$ because for $\cO=\R$ the argument is analogous but much easier. This is due to the fact that for $\cO=\R$ the convex combination of admissible controls is also admissible and $\tau_\cO(\nu;x)=T-t$ for any $x\in\cO$ and $\nu\in\cA_{t,x}$.

We fix $\eta>0$ and, for $i=1,2$, we pick $\nu^{i,\eta}\in\cA_{t,x_i}$ such that 
\[
v(t,x_i)\ge \sup_{\tau\in\cT_t}\cJ_{t,x_i}(\tau,\nu^{i,\eta})-\eta.
\]
For notational simplicity we set $X^{i,\eta}_s=X^{\nu^{i,\eta};x_i}_s$, $\tau_i=\tau_\cO(\nu^{i,\eta};t,x_i)$, for $i=1,2$ and $\tau_{1,2}=\tau_1\wedge\tau_2$. With similar notations as in Lemma \ref{lem:convexity} and with the conventions of Remark \ref{rem:contr}, we consider the control process $\hat \nu^\lambda=\lambda\nu^{1,\eta}+(1-\lambda)\nu^{2,\eta}$, the associated process $\hat X^\lambda=X^{\hat \nu^\lambda;x_\lambda}$ and the exit time $\hat \theta_\lambda=\inf\{s\ge 0:\hat X^\lambda_s\notin\cO\}\wedge(T-t)$. Next, we set 
\[
\nu^\lambda_s\coloneqq\hat \nu^\lambda_s 1_{\{s<\hat \theta_\lambda\}}+\big(\hat \nu^\lambda_{\hat \theta_\lambda -}-\hat X^\lambda_{\hat \theta_\lambda -}\big)1_{\{s\ge \hat \theta_\lambda\}},
\]
with the associated controlled dynamics $X^\lambda=X^{\nu^\lambda;x_\lambda}$ with $\theta_\lambda=\inf\{s\ge 0:X^\lambda_s\notin\cO\}\wedge(T-t)$. By construction the process $\hat \nu^\lambda$ is c\`adl\`ag and non-decreasing. Moreover, $\hat X^\lambda_s= X^\lambda_s$ for $s\in[\![0,\hat\theta_\lambda)\!)$, $\hat \theta_\lambda=\theta_\lambda$ and $X^\lambda_s=0$ for $s\in[\![\theta_\lambda,T-t]\!]$, because $\nu^\lambda_s-\nu^\lambda_{\theta_\lambda}=0$ for $s\in[\![\theta_\lambda,T-t]\!]$. Therefore $\nu^\lambda\in\cA_{t,x_\lambda}$ and $\theta_\lambda=\tau_\cO(\nu^\lambda;t,x_\lambda)$. In this setup $\tau_1\vee\tau_2\ge \theta_\lambda$ because $X^\lambda_s\le \lambda X^{1,\eta}_s+(1-\lambda)X^{2,\eta}_s$ for $s\in[\![0,\theta_\lambda]\!]$ by Lemma \ref{lem:convexity}.

By definition of admissible controls $\nu^{i,\eta}_s-\nu^{i,\eta}_{\tau_i}=0$ for $s\in[\![\tau_i,T-t]\!]$, hence $X^{i,\eta}_s=0$ for $s\in[\![\tau_i,T-t]\!]$ as explained after Assumption \ref{ass:1a} and in Remark \ref{rem:benchm}. Then Lemma \ref{lem:convexity} (cf.\ also Remark \ref{rem:lemconvex}) implies 
\begin{align}\label{eq:tau12}
\begin{aligned}
&X^\lambda_s\le (1-\lambda)X^{2,\eta}_s,\quad &\text{for $s\in[\![\tau_1,\theta_\lambda]\!]$ on the event $\{\tau_{1,2}< \theta_\lambda\}\cap\{\tau_1<\tau_2\}$},\\
&X^\lambda_s\le \lambda X^{1,\eta}_s,\quad &\text{for $s\in[\![\tau_2,\theta_\lambda]\!]$ on the event $\{\tau_{1,2}< \theta_\lambda\}\cap\{\tau_2<\tau_1\}$}.
\end{aligned}
\end{align}
In particular, on the event $\{\tau_1=\tau_2\}$ we have $\theta_\lambda\le \tau_1$ and $X_s^\lambda=0$ for $s\in[\![\tau_1,T-t]\!]$.
Finally, we notice that for any $\tau\in\cT_t$
\begin{align}\label{eq:varnu}
\int_{[0,\tau\wedge\theta_\lambda]}\!\e^{-rs}\alpha_{0}\,\ud |\nu^\lambda|_s\le \int_{[0,\tau\wedge\theta_\lambda]}\!\e^{-rs}\alpha_{0}\,\ud \big(\lambda|\nu^{1,\eta}|_s+(1-\lambda)|\nu^{2,\eta}|_s\big).
\end{align}

Take $\tau_\eta\in\cT_t$ such that $v(t,x_\lambda)\le \inf_{\nu\in\cA_{t,x_\lambda}}\cJ_{t,x_\lambda}(\tau_\eta,\nu)+\eta$. Since $\theta_\lambda\le \tau_1\vee\tau_2$, then 
\begin{align}\label{eq:conv00}
\begin{aligned}
&\lambda v(t,x_1)+(1-\lambda)v(t,x_2)-v(t,x_\lambda)\\
&\geq \lambda\cJ_{t,x_1}(\tau_\eta\wedge \theta_\lambda,\nu^{1,\eta})+(1-\lambda)\cJ_{t,x_2}(\tau_\eta\wedge \theta_\lambda,\nu^{2,\eta})-\cJ_{t,x_\lambda}(\tau_\eta,\nu^{\lambda})-3\eta\\
&\ge\E\Big[\int_0^{\tau_\eta\wedge\theta_\lambda\wedge\tau_{1,2}}\!\e^{-rs}\big(\lambda \Theta(t\!+\!s,X^{1,\eta}_s)+(1-\lambda)\Theta(t\!+\!s,X^{2,\eta}_s)-\Theta(t\!+\!s, X^{\lambda}_s)\big)\,\ud s\Big]\\
&\quad+\E\Big[\mathds{1}_{\{\tau_{1,2}< \tau_\eta\wedge\theta_\lambda\}\cap\{\tau_1< \tau_2\}}\int_{\tau_{1}}^{\tau_\eta\wedge\theta_\lambda}\!\!\!\e^{-rs}\big((1-\lambda)\Theta(t\!+\!s,X^{2,\eta}_s)-\Theta(t\!+\!s, X^{\lambda}_s)\big)\,\ud s\Big]\\
&\quad+\E\Big[\mathds{1}_{\{\tau_{1,2}< \tau_\eta\wedge\theta_\lambda\}\cap\{\tau_1> \tau_2\}}\int_{\tau_{2}}^{\tau_\eta\wedge\theta_\lambda}\!\!\!\e^{-rs}\big(\lambda \Theta(t\!+\!s,X^{1,\eta}_s)-\Theta(t\!+\!s, X^{\lambda}_s)\big)\,\ud s\Big]-3\eta,
\end{aligned}
\end{align}
where we used Dynkin's formula to write the integral in terms of $\Theta$ as in Remark \ref{rem:assv}. 

From the convexity of $x\mapsto \Theta(t,x)$ and using that $X^\lambda_s\le \lambda X^{1,\eta}_s+(1-\lambda)X^{2,\eta}_s$ for $s\in[\![0,\theta_\lambda\wedge\tau_{1,2}]\!]$, we have 
\begin{align}\label{eq:conv01}
\begin{aligned}
&\lambda \Theta(t+s,X^{1,\eta}_s)+(1-\lambda)\Theta(t+s,X^{2,\eta}_s)\\
&\geq \Theta(t+s,\lambda X^{1,\eta}_s+(1-\lambda)X^{2,\eta}_s)\geq \Theta(t+s,X^{\lambda}_s),\quad s\in[\![0,\theta_\lambda\wedge\tau_{1,2}]\!],
\end{aligned}
\end{align}
where the final inequality holds because $\Theta_x\ge 0$. On the event $\{\tau_{1,2}< \tau_\eta\wedge\theta_\lambda\}\cap\{\tau_1< \tau_2\}$ we have 
\begin{align}\label{eq:conv02}
\begin{aligned}
&\int_{\tau_{1}}^{\tau_\eta\wedge\theta_\lambda}\!\!\!\e^{-rs}\big((1-\lambda)\Theta(t\!+\!s,X^{2,\eta}_s)-\Theta(t\!+\!s,X^{\lambda}_s)\big)\,\ud s\\
&\ge \int_{\tau_{1}}^{\tau_\eta\wedge\theta_\lambda}\!\!\!\e^{-rs}\big(\lambda\Theta(t\!+\!s,0)\!+\!(1\!-\!\lambda)\Theta(t\!+\!s,X^{2,\eta}_s)-\Theta(t\!+\!s,X^{\lambda}_s)\big)\,\ud s\\
&\ge \int_{\tau_{1}}^{\tau_\eta\wedge\theta_\lambda}\!\!\!\e^{-rs}\big(\Theta(t\!+\!s,(1\!-\!\lambda)X^{2,\eta}_s)-\Theta(t\!+\!s,X^{\lambda}_s)\big)\,\ud s\ge 0,
\end{aligned}
\end{align}
where the first inequality holds because $\Theta(t+s,0)\le 0$ (cf.\ Assumption \ref{ass:1}(iii)), the second one holds by convexity of $\Theta(t,\,\cdot\,)$ (cf.\ Assumption \ref{ass:1}(ii)) and the final one holds by \eqref{eq:tau12} and $\Theta_x\ge 0$. By analogous arguments, on the event $\{\tau_{1,2}< \tau_\eta\wedge\theta_\lambda\}\cap\{\tau_1> \tau_2\}$ we have 
\begin{align}\label{eq:conv03}
&\int_{\tau_{2}}^{\tau_\eta\wedge\theta_\lambda}\!\!\!\e^{-rs}\big(\lambda\Theta(t\!+\!s,X^{1,\eta}_s)-\Theta(t\!+\!s,X^{\lambda}_s)\big)\,\ud s\ge 0.
\end{align}

Plugging \eqref{eq:conv01}, \eqref{eq:conv02} and \eqref{eq:conv03} into \eqref{eq:conv00} yields
\[
\lambda v(t,x_1)+(1-\lambda)v(t,x_2)-v(t,x_\lambda)\ge -3\eta.
\]
By arbitrariness of $\eta>0$ we deduce convexity of $x\mapsto v(t,x)$, as needed.
\end{proof}

For every $t\in[0,T)$ we set
\begin{align}\label{eq:uppbndry}
a(t)\coloneqq \inf\{y\in\cO:v(t,y)>g(t)\}\quad\text{ and }\quad
b(t)\coloneqq \sup\{y\in\cO:v_x(t,y)<\alpha_{0}\},
\end{align}
with the convention $\inf\varnothing =\sup \cO=+\infty$ and $\sup\varnothing =\inf\cO$. Since $v$ and $v_x$ are continuous and non-decreasing in $x$, then we can parametrise the sets $\cC$ and $\cI$ as 
\[
\cC=\{(t,x)\in[0,T)\times\cO:x>a(t)\}\quad\text{and}\quad\cI=\{(t,x)\in[0,T)\times\cO:x<b(t)\}\cup\big(\{T\}\!\times\!\cO\big).
\]
We refer to $a(\cdot)$ as the {\em stopping boundary} and to $b(\cdot)$ as the {\em action boundary}. Analogously, we parametrise the {\em stopping set} and the {\em action set} as
\[
\cS=\cC^c=\{(t,x)\in[0,T)\times\cO:x\le a(t)\}\cup\big(\{T\}\times\cO\big)
\]
and
\[
\cM=\cI^c=\{(t,x)\in[0,T)\times\cO:x\ge b(t)\},
\]
respectively.
Since $v(t,y)=g(t)$ for $y\leq a(t)$ and $v_x$ is continuous, then $v_x(t,y)=0<\alpha_0$ for $y\le a(t)$ when $\inf\cO<a(t)$. Therefore, $a(t)<b(t)$ for all $t\in[0,T)$ such that $a(t)\in(\inf\cO,\infty)$. Finally, the stopping time $\tau_*$ defined in \eqref{eq:tau*} can be equivalently written as
\begin{align*}
\tau_*=\tau_*(\nu;t,x)=\inf\big\{s\ge 0: \min\big[X^{\nu;x}_s,X^{\nu;x}_{s-}\big]\leq a(t+s)\big\}\wedge(T-t).
\end{align*}

\begin{remark}\label{rem:CI}
For concreteness, we assume throughout that $\cC\cap\cI\neq\varnothing$. The latter always holds for $\cO=(0,\infty)$ (cf.\ Proposition \ref{prop:a-rc}) and for $\cO=\R$ we show sufficient conditions in Proposition \ref{prop:b>0}.
\end{remark}

The use of the two boundaries $a(t)$ and $b(t)$ allows us to rewrite the variational problem \eqref{eq:varine} in a cleaner form as a free boundary problem with two free boundaries. The value function solves in the classical sense the boundary value problem 
\begin{align}\label{eq:PDE}
\begin{aligned}
&\partial_tv(t,x)+\cL v(t,x)-rv(t,x) = -h(t,x),\quad (t,x)\in\cC\cap\cI,\\
&v(t,a(t))=g(t)\quad\text{and}\quad v_x(t,a(t))=0,\quad\quad t\in[0,T),\\
&v_x(t,b(t))=\alpha_0,\qquad\qquad\qquad\qquad\qquad\qquad \!t\in[0,T),
\end{aligned}
\end{align}
with $v(T,x)=g(T)$ and where the boundary conditions at $a$ and $b$ only make sense when $a$ and $b$ are non-trivial, i.e., for $t\in[0,T)$ such that $\inf\cO<a(t)<b(t)<+\infty$. Moreover, the obstacle and gradient constraint continue to hold, i.e., $v(t,x)\ge g(t)$ and $v_x(t,x)\le \alpha_0$ for all $(t,x)\in[0,T]\times\cO$ and the following inequalities hold
\begin{align}\label{eq:VIineq}
\begin{aligned}
&\partial_tv(t,x)+\cL v(t,x)-rv(t,x) \le -h(t,x),\quad\text{for a.e.\ $(t,x)$ such that $x<b(t)$},\\
&\partial_tv(t,x)+\cL v(t,x)-rv(t,x) \ge -h(t,x),\quad\text{for a.e.\ $(t,x)$ such that $x>a(t)$}.
\end{aligned}
\end{align}

Before closing the section we state the main properties of the stopping boundary. 
Recall the function $\Theta$ defined in \eqref{eq:Theta}. By Assumption \ref{ass:1}(ii)-(iii), $\Theta$ is continuous, non-increasing in time and non-decreasing in space. Thus we have that (recall Assumption \ref{ass:1}(v) and Remark \ref{rem:assv})
\begin{align*}
\underline{\Theta}(t)\coloneqq\inf\{y\in\cO:\Theta(t,y)>0\}, 
\end{align*}
is non-decreasing on $[0,T]$ and such that $\Theta(t,x)>0\iff x>\underline\Theta(t)$ for each $t\in[0,T]$. We will use this function to obtain an upper bound for the stopping boundary. 

\begin{proposition}\label{prop:a-rc}
The stopping boundary $t\mapsto a(t)$ is non-decreasing and right-continuous, with $a(t)\le \underline{\Theta}(t)$ for all $t\in[0,T)$. Moreover, when $\cO=(0,\infty)$ we have $a(t)>0$ for all $t\in(0,T]$.
\end{proposition}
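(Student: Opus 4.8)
\emph{Overview and monotonicity.} I would establish the four statements in turn. For monotonicity, the plan is first to show that $t\mapsto v(t,x)-g(t)$ is non-increasing on $[0,T]$ for each fixed $x\in\cO$. By the elementary identity recalled in Remark~\ref{rem:assv}, for every admissible pair $(\tau,\nu)$,
\[
\cJ_{t,x}(\tau,\nu)-g(t)=\E_x\Big[\int_0^{\tau\wedge\tau_\cO}\!\e^{-rs}\Theta(t+s,X^\nu_s)\,\ud s+\int_{[0,\tau\wedge\tau_\cO]}\!\e^{-rs}\alpha_0\,\ud|\nu|_s\Big],
\]
and, since the SDEs \eqref{eq:SDE0}--\eqref{eq:SDE} are time-homogeneous, neither the trajectory of $X^\nu$ nor the exit time $\tau_\cO$ depends on $t$. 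For $t'<t$ one has $\cT_t\subseteq\cT_{t'}$ and $\cA_{t',x}\subseteq\cA_{t,x}$ (a short check of the square-integrability and, when $\cO=(0,\infty)$, of the $\cO$-confinement conditions), while $\Theta(t'+s,\cdot)\ge\Theta(t+s,\cdot)$ by Assumption~\ref{ass:1}(iii). Taking an $\eps$-optimal stopping rule for the stopper at $(t,x)$, which remains admissible at $t'$, and combining these facts, one gets $v(t',x)-g(t')\ge v(t,x)-g(t)-\eps$ for all $\eps>0$, i.e.\ $v(t',x)-g(t')\ge v(t,x)-g(t)$. In particular $x>a(t)$ forces $v(t,x)>g(t)$, hence $v(t',x)>g(t')$ and $x>a(t')$; so the $x$-sections of $\cC$ are nested in time and $a$ is non-decreasing.

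\emph{The bound $a\le\underline{\Theta}$ and right-continuity.} On the relatively open set $\{(t,x):t\in(0,T),\,x<a(t)\}$ we have $v\equiv g$, since $\cS=\cC^c=\{v=g\}$; there $v_x=v_{xx}=0$, $v_t=\dot g$, and thus $\partial_tv+\cL v-rv+h=\Theta$. Because $\{x<a(t)\}\subseteq\{x<b(t)\}$, the first inequality of \eqref{eq:VIineq} forces $\Theta(t,x)\le0$ for a.e.\ such $(t,x)$, hence for all of them by continuity; letting $x\uparrow a(t)$ gives $\Theta(t,a(t))\le0$, which is exactly $a(t)\le\underline{\Theta}(t)$ (trivial when $a(t)=\inf\cO$, and $a(t)<\infty$ for $t<T$ by Assumption~\ref{ass:1}(v)). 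For right-continuity, monotonicity already gives $a(t+)\ge a(t)$; if $a(t+)>a(t)$, pick $x\in\cO$ with $a(t)<x<a(t+)$, so that $(t,x)\in\cC$ while $a(s)\ge a(t+)>x$ for every $s>t$, hence $(s,x)\in\cS$ for all $s\in(t,T)$. Letting $s\downarrow t$ and using that $\{v-g\le0\}$ is relatively closed yields $(t,x)\in\cS$, contradicting $(t,x)\in\cC$.

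\emph{Positivity when $\cO=(0,\infty)$.} Here the plan is to dominate $v$ by the value of an auxiliary one-player stopping problem for the uncontrolled dynamics. Taking $\nu\equiv0$ (admissible by Assumption~\ref{ass:1a}(i), and $\tau_\cO=T-t$ as $0$ is non-attainable for $X^0$) and using the identity above,
\[
0\le v(t,x)-g(t)\le \sup_{\tau\in\cT_t}\E_x\Big[\int_0^{\tau}\!\e^{-rs}\Theta(t+s,X^0_s)\,\ud s\Big]\eqqcolon U(t,x).
\]
It suffices to show that for each $t\in(0,T)$ there is $\delta_t>0$ with $U(t,x)=0$ for $x\in(0,\delta_t)$, as then $v(t,x)=g(t)$ on $(0,\delta_t)$ and $a(t)\ge\delta_t$. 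The value $U$ inherits the standard features of a finite-horizon stopping problem: it lies in $W^{1,2;p}_{\ell oc}$ and solves $\partial_tU+\cL U-rU+\Theta=0$ a.e.\ on the open set $\{U>0\}$; it is continuous up to $\{x=0\}$ with $U(\cdot,0)\equiv0$ (since $X^{0;x}_s\to0$ in mean square as $x\downarrow0$ and $\Theta(t+s,0)\le\Theta(0,0)<0$ makes immediate stopping optimal in the limit); it is non-increasing in $t$; and, by the arguments of Propositions~\ref{prop:vincr} and \ref{prop:vconvex} specialised to $\nu\equiv0$ (together with $\Theta(t,\cdot)$ non-decreasing and convex, Assumption~\ref{ass:1}(ii)), it is non-decreasing and convex in $x$. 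If the claim failed at some $t^*\in(0,T)$, convexity and $U(t^*,0)=0$ would give $U(t^*,\cdot)>0$ on $(0,\infty)$, whence, by monotonicity in $t$, $U>0$ on $(0,t^*]\times(0,\infty)$. Fixing $0<t_1<t_2<t^*$ and setting $W(x)\coloneqq\int_{t_1}^{t_2}U(s,x)\,\ud s$, we have $W\ge0$, $W(0)=0$, $W$ convex, and $W'=\int_{t_1}^{t_2}U_x(s,\cdot)\,\ud s\ge0$ (as $U$ is non-decreasing in $x$). Integrating the equation for $U$ in $s$ over $[t_1,t_2]$,
\[
\tfrac{\sigma_1^2x^2}{2}W''(x)+\mu(x)W'(x)-rW(x)=U(t_1,x)-U(t_2,x)-\int_{t_1}^{t_2}\!\Theta(s,x)\,\ud s .
\]
Letting $x\downarrow0$, the right-hand side converges to $-\int_{t_1}^{t_2}\Theta(s,0)\,\ud s>0$ (using $\Theta(s,0)\le\Theta(0,0)<0$, Assumption~\ref{ass:1}(iii)), while on the left $rW(x)\to0$ and $\mu(x)W'(x)\to0$ (since $\mu(0)=0$ and $W'$ is bounded near $0$, being non-decreasing with $0\le W'$), so $\tfrac{\sigma_1^2x^2}{2}W''(x)\to-\int_{t_1}^{t_2}\Theta(s,0)\,\ud s>0$; hence $W''(x)\ge c\,x^{-2}$ near $0$ for some $c>0$, and integrating this lower bound forces $W'(x)\to-\infty$ as $x\downarrow0$, contradicting $W'\ge0$. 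This contradiction proves the claim, and $a(t)>0$ on $(0,T)$ (the case $t=T$ being vacuous, since $v(T,\cdot)=g(T)$).

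\emph{Main obstacle.} Monotonicity, the $\underline{\Theta}$-bound and right-continuity are routine once the $\Theta$-representation is in hand. The work concentrates in the positivity statement: one must pin down the regularity of the auxiliary value $U$ up to the degenerate boundary $\{x=0\}$ --- continuity, and enough Sobolev regularity to integrate the equation in time and to pass to the limit $x\downarrow0$ in the $W^{1,2;p}_{\ell oc}$ setting --- and then carry out that limiting argument carefully; this is precisely where the degeneracy $\mu(0)=\sigma(0)=0$ and the sign condition $\Theta(0,0)<0$ interact to produce the blow-up of $W'$.
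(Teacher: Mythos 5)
Your Steps 1--3 (monotonicity, right-continuity, and the bound $a\le\underline{\Theta}$) track the paper's proof essentially verbatim: the paper also establishes the monotonicity of $t\mapsto v(t,x)-g(t)$ by the same admissibility-inclusion and $\Theta$-monotonicity argument, gets right-continuity from closedness of $\cS$, and obtains the $\underline{\Theta}$-bound by observing that $\Theta\le0$ on the stopping region. (Minor caveat on Step 3: the set $\{x<a(t)\}$ is not open in general when $a$ jumps from the left; the paper makes the argument rigorous by choosing a rectangle in $\inter\cS$ extending to the \emph{right} of $t_0$, which is available since $a$ is non-decreasing. Your ``letting $x\uparrow a(t)$ by continuity'' phrasing quietly uses the same idea and is fine, but it is worth being explicit that the a.e.-inequality is harvested from the interior of $\cS$.)

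For Step 4 (positivity of $a$ when $\cO=(0,\infty)$), your route is genuinely different from the paper's and substantially more laborious. The paper argues by contradiction via a single explicit admissible control (do nothing on $[0,t_0)$, then jump the process to $0$ at $t_0$), uses optimality of $\tau_*$, and computes $\limsup_{x\downarrow0}(v-g)\le\int_0^{t_0}\e^{-rs}\Theta(s,0)\,\ud s<0$, contradicting $v\ge g$. No PDE analysis, no auxiliary value function, and in particular no regularity of anything up to the degenerate boundary $\{x=0\}$. Your argument instead dominates $v-g$ by the stopping value $U$ for the uncontrolled dynamics, then integrates the free-boundary PDE for $U$ in time and derives a blow-up of $W'$ as $x\downarrow0$. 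This works in principle (the algebra of the contradiction, including $W''\gtrsim x^{-2}$ forcing $W'\to-\infty$, is correct), but it imports nontrivial technical debt: one must establish that $U$ is continuous up to $\{x=0\}$ with $U(\cdot,0)=0$, that $U\in W^{1,2;p}_{\ell oc}$ and solves the HJB a.e.\ on $\{U>0\}$, that $U$ is convex and non-decreasing in $x$ and non-increasing in $t$, and that one can Fubini the equation over a time slab and pass $x\downarrow0$. You correctly flag this as the main obstacle, but those facts do not come for free --- they require essentially a parallel penalisation or regularity analysis for $U$, which the paper's direct probabilistic argument entirely circumvents. What your route ``buys'' is a characterisation of a vanishing region for $U$ that could be of independent interest; what it costs is precisely the boundary-regularity analysis that the paper was structured to avoid.
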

\begin{proof}
The proof is divided in steps for the ease of exposition.

\noindent{\bf Step 1}. We first prove that the function $a$ is non-decreasing. For that, it is sufficient to show that $t\mapsto v(t,x)-g(t)$ is non-increasing because that corresponds to
\[
(s,x)\in\cS\implies(t,x)\in\cS,\quad\text{for $0\le s\le t\le T$}.
\]

Fix $x\in\cO$ and $0\le s\leq t\le T$. Since $t\ge s$, then $\cT_t\subseteq \cT_s$ and $\cA_{s,x}\subseteq\cA_{t,x}$. Let $\tau_*$ be an optimal stopping time for $v(t,x)$ and $\nu\in\cA_{s,x}$ be arbitrary. Then, we have 
\begin{align*}
v(t,x)-g(t)&\leq \E_x\Big[\int_0^{\tau_*}\!\e^{-r u}\Theta(t+u,X_{u}^{\nu})\ud u+\int_{[0,\tau_*]}\!\e^{-r u}\alpha_0\ud |\nu|_u\Big]\\
&\leq \E_x\Big[\int_0^{\tau_*}\!\e^{-r u}\Theta(s+u,X_{u}^{\nu})\ud u+\int_{[0,\tau_*]}\!\e^{-r u}\alpha_0\ud |\nu|_u\Big]\\
&\le \sup_{\tau\in\cT_s}\E_x\Big[\int_0^{\tau}\!\e^{-r u}\Theta(s+u,X_{u}^{\nu})\ud u+\int_{[0,\tau]}\!\e^{-r u}\alpha_0\ud |\nu|_u\Big],
\end{align*}
where the second inequality holds because $\Theta(\,\cdot\,,x)$ is non-increasing. Since $\nu\in\cA_{s,x}$ was arbitrary, we conclude that $v(t,x)-g(t)\le v(s,x)-g(s)$, as required. 
\medskip

\noindent{\bf Step 2}. We prove now right-continuity of $a(\cdot)$. Since $\cS$ is closed by continuity of $v$ and $g$, for any {\em decreasing} sequence $(t_n)_{n\in\N}\subset[0,T]$ converging to $t_0\in[0,T)$, we have
\begin{align}\label{eq:rca}
\lim_{n\to\infty}(t_n,a(t_n))=(t_0,a(t_0+))\in\cS\implies a(t_0+)\le a(t_0), 
\end{align}
where $a(t_0+)\coloneqq\lim_{n\to\infty}a(t_n)$ exists because $a$ is monotone. Since $a(t_n)\ge a(t_0)$ for all $n\in\N$, we conclude that $a(t_0+)= a(t_0)$.
\medskip

\noindent{\bf Step 3}.
Next we use monotonicity and right-continuity of $a$ to obtain that $a(t)\leq \underline{\Theta}(t)$ for $t\in[0,T)$. Take an arbitrary $t_0\in[0,T)$. If $a(t_0)=\inf\cO$ the claim is obvious, therefore we focus on $a(t_0)>\inf\cO$. By right-continuity and monotonicity of $a$ we can take an open rectangle $\cR\subset\mathrm{int}(\cS)$ whose left edge lies below the stopping boundary at $t_0$, i.e.
\begin{align}\label{eq:R}
\overline\cR\cap(\{t_0\}\times\cO)\subseteq\{t_0\}\times(\inf\cO,a(t_0)-\eps),
\end{align} 
for some small $\eps>0$. 
Since $v(t,x)=g(t)$ for $(t,x)\in \cR$, then
$(\partial_t+\cL-r)v(t,x)+h(t,x)=\Theta(t,x)$, for $(t,x)\in\cR$.
From \eqref{eq:VIineq}, we know that the left-hand side above must be non-positive, hence $\Theta(t,x)\le 0$ for $(t,x)\in\cR$. By continuity and \eqref{eq:R} we obtain $\Theta(t_0,x)\le 0$ for $x\le a(t_0)-\eps$ and letting $\eps\downarrow 0$ we conclude that $\Theta(t_0,x)\le 0$ for $x\le a(t_0)$. Since $t_0$ was also arbitrary, we deduce $\Theta(t,x)\le 0$ for all $(t,x)\in\cS$. That implies $a(t)\le \underline\Theta(t).$
\medskip

\noindent{\bf Step 4}. 
We conclude by showing that $a(t)>0$ for all $t\in(0,T]$, when $\cO=(0,\infty)$. Arguing by contradiction, let us assume there is $t_0\in(0,T)$ such that $a(t_0)=0$. Then, by monotonicity it must be $a(t)=0$ for $t\in[0,t_0]$. Let $X^{0;x}$ be the uncontrolled process and take an admissible control $\nu$ of the form $\nu_t=0$ for $t\in[0,t_0)$ and $\nu_t=-X^{0;x}_{t_0}$ for $t\in[t_0,T]$. Define $X^{\nu;x}$ as the solution of the SDE \eqref{eq:SDE} controlled by $\nu$. Then $\tau_*(\nu;0,x)= t_0=\tau_\cO(\nu;0,x)$ and we have
\begin{align*}
v(t,x)-g(t)\le \E_x\Big[\int_0^{t_0}\e^{-rs}\Theta(s,X^{\nu}_s)\ud s+\alpha_0\e^{-r t_0}X^\nu_{t_0}\Big].
\end{align*}
Letting $x\downarrow 0$ and applying monotone convergence (recall that $\Theta(s,\,\cdot\,)$ is increasing and bounded from below) we have 
\[
\limsup_{x\downarrow 0}\big(v(t,x)-g(t)\big)\le \int_0^{t_0}\e^{-rs}\Theta(s,0)\ud s<0,
\]
where the final inequality holds by Assumption \ref{ass:1}(iii) and we used $\lim_{x\downarrow 0}\E_x[X^\nu_{t}]=0$ for $t\in[0,t_0]$. The latter holds by standard estimates for SDEs because $\lim_{x\to 0}\big(|\mu(x)|+\sigma(x)\big)=0$ (cf.\ Assumption \ref{ass:1a}). Indeed, for the uncontrolled process $X^0=X^{0;x}$, by H\"older's inequality and Doob's martingale inequality we obtain
\begin{align*}
\E\Big[\sup_{s\in[0,T]}|X_s^0|^2\Big]\le& 3x^2+3\E\Big[\sup_{s\in[0,T]}\Big|\int_0^sb(X_u^0)\ud u\Big|^2+\sup_{s\in[0,T]}\Big|\int_0^s\sigma(X_u^0)\ud W_u\Big|^2\Big]\\
\le&3x^2+3\E\Big[T\int_0^T|b(X_u^0)|^2\ud u+4\int_0^T|\sigma(X_u^0)|^2\ud u\Big]\\
\le&c\Big( x^2+\E\Big[\int_0^T\sup_{v\in[0,u]}|X_v^0|^2\ud u\Big]\Big),
\end{align*}
where $c>0$ is a constant independent of $x$ and, in the final inequality, we used that $\sigma$ is linear and $\mu$ is Lipschitz with $\mu(0)=0$, hence $|\mu(x)|\le L|x|$. By Gronwall's lemma
$\E\big[\sup_{s\in[0,T]}|X_s^0|^2\big]\le c' x^2$ for another constant $c'>0$.
Noticing that $X^{0;x}_s=X^{\nu;x}_s$ for $s<t_0$ and $X^{\nu;x}_s=0$ for $s\ge t_0$, $\P$-a.s., we conclude
$\lim_{x\downarrow 0}\E_x[X^\nu_{t}]=0$, as claimed.
\end{proof}

\section{An auxiliary stopping problem and the action boundary}\label{sec:aux}

In order to determine geometric properties of the action boundary $t\mapsto b(t)$ we must study properties of the function $v_x$. In general, this is a challenging task both from a probabilistic and an analytical point of view. In this section we are able to obtain a probabilistic interpretation of $v_x$ as the value function of an auxiliary optimal stopping problem and to prove that $b(t)$ is the optimal stopping boundary for such new problem. This allows us to obtain monotonicity of $t\mapsto b(t)$.

\subsection{An auxiliary stopping problem}

The existence of the value function $v$ of the game \eqref{eq:game} is proved in \cite{bovo2023c,bovo2022variational} by a penalisation procedure that goes as follows: take $\eps,\delta\in(0,1)$, denote $(c)^+=\max\{0,c\}$ and let $\psi_\varepsilon\in C^2(\R)$ be convex, non-decreasing, with $\psi_\varepsilon(y)=0$ if $y\leq0$ and $\psi_\varepsilon(y)=\frac{y-\varepsilon}{\varepsilon}$ if $y\geq 2\varepsilon$. By \cite[Thm.\ 5.3+Prop.\ 5.4]{bovo2022variational} and \cite[Thm.\ 4.6]{bovo2023c}, there exists a unique solution $v^{\varepsilon,\delta}\in C^{1,2;\gamma}_{\ell oc}([0,T)\times\cO)$ with $v^{\eps,\delta} \in C([0,T]\times\cO)$ and $v^{\eps,\delta}_x\in C^\gamma_{\ell oc}([0,T)\times\cO)$ (for any $\gamma\in(0,1)$), of the following semi-linear PDE: 
\begin{align}\label{eq:PDE_pen}
\begin{cases}
(\partial_t +\cL-r)v^{\varepsilon,\delta}=-h-\frac{1}{\delta}(g-v^{\varepsilon,\delta})^++\psi_\varepsilon(|v^{\varepsilon,\delta}_x|^2-\alpha_{0}^2),&\quad\text{on }[0,T)\times\cO,\\
v^{\varepsilon,\delta}(T,y)=g(T),&\quad y\in\cO,
\end{cases}
\end{align} 
where $v^{\eps,\delta}$ has at most quadratic growth. If $\cO=(0,\infty)$, an additional boundary condition $v^{\eps,\delta}(t,0)=g(t)$ is necessary for $t\in[0,T)$ and $v^{\eps,\delta}$ is continuous up to and including the boundary $[0,T)\times\{0\}$.

It is shown in \cite[Thm.\ 6.1]{bovo2022variational} and in \cite[Sec.\ 5]{bovo2023c} that for any $\gamma\in(0,1)$ and $p \in (1,\infty)$,
\begin{align}\label{eq:prp_pen_prb}
\lim_{\eps,\delta\to 0}v^{\eps,\delta}=v\:\text{in $C^{0,1;\gamma}_{\ell oc}([0,T]\times\cO)$ and, weakly, in $W^{1,2;p}_{\ell oc}([0,T]\times\cO)$}. 
\end{align}
The proof of the above limits uses a bound which will be required later in this paper. It is shown in \cite[Lemma 5.8]{bovo2022variational} and in \cite[Lemma 4.9]{bovo2023c} (using the same notation) that for each compact $\cK\subset[0,T]\times\cO$ there is a constant $M=M(\cK)>0$ such that 
\begin{align}\label{eq:boundpsi}
\big\|\psi_\eps\big(|v^{\eps,\delta}_x|^2-\alpha_0^2\big)\big\|_{C^0(\cK)}\le M,\quad\text{for all $\eps,\delta\in(0,1)$},
\end{align}
where $\|\cdot\|_{C^0(\cK)}$ denotes the supremum norm on the space of continuous functions on $\cK$.

The function $v^{\eps,\delta}$ enjoys a probabilistic representation which involves two classes of admissible controls. For $t\in[0,T]$ and $x\in\cO$, we introduce the class of absolutely continuous controls 
\begin{align}\label{eq:Acirc}
\cA^\circ_{t,x}\coloneqq\{\nu\in\cA_{t,x}: t\mapsto\nu_t\text{ is absolutely continous on }[0,T-t]\}.
\end{align}
Also, we let
\begin{align*}
\cT_t^\delta\coloneqq\left\{w\,:\begin{aligned} &\,(w_s)_{s\ge 0}\text{ is progressively measurable,} \\
&\text{with $0 \leq w_s \leq \tfrac{1}{\delta}$, $\P$-a.s. for all $s \in [0, T-t]$}
\end{aligned}\right\} .
\end{align*}

Given $\xi\in\cA_{t,x}^\circ$, let $X^{\xi}$ be the solution of the SDE 
\begin{align}\label{eq:SDE_pen}
X^{\xi}_s=x+\int_0^s\big[\mu(X_u^\xi)+\dot{\xi}_u\big]\ud u+\int_0^s\sigma(X_u^\xi)\ud W_u,\quad\text{for $s\in[0,T-t]$}.
\end{align}
Then $v^{\varepsilon,\delta}$ admits the following representation (see \cite[Prop.\ 5.4]{bovo2022variational}): setting
\[
H^\eps(t,x;\lambda,\zeta)\coloneqq h(t,x)\!+\!\lambda g(t)\!+\!\sup_{p\in\R}\!\big\{p\,\zeta\!-\!\psi_\varepsilon(p^2\!-\!\alpha_{0}^2)\big\},
\]
and $\tau_\cO=\tau_\cO(\xi;t,x)=\inf\{s\ge 0: X^\xi_s\le 0\}\wedge(T-t)$, we have
\begin{align}\label{eq:pen_prb_rep}
&\quad v^{\varepsilon,\delta}(t,x)=\adjustlimits\sup_{w\in\cT^\delta_t}\inf_{\xi\in\cA^\circ_{t,x}}\E_{x}\Big[\e^{-\int_0^{\tau_\cO}(r+w_s)\ud s}g(t\!+\!\tau_\cO)\!+\!\int_0^{\tau_\cO}\!\!\!\e^{-\int_0^s(r+w_u)\ud u}H^\eps(t\!+\!s,X^{\xi}_s;w_s,\dot \xi_s)\,\ud s\Big],
\end{align}
where the order of supremum and infimum can be swapped without changing the value of the function. 
\begin{lemma}\label{lem:vepsdelta}
There is $c\ge 0$, independent of $(\eps,\delta)$, such that $v^{\eps,\delta}\ge -c$. Moreover, the mapping $x\mapsto v^{\eps,\delta}(t,x)$ is non-decreasing and convex for every $t\in[0,T]$.
\end{lemma}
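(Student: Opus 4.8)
The plan is to transfer the three properties (lower bound, monotonicity, convexity) from the probabilistic representation \eqref{eq:pen_prb_rep}, exploiting the fact that $v^{\eps,\delta}$ is a value of an (exchangeable) sup--inf over absolutely continuous controls $\xi\in\cA^\circ_{t,x}$ and bounded discount perturbations $w\in\cT^\delta_t$. The key technical advantage over the singular-control case treated in Propositions \ref{prop:vincr} and \ref{prop:vconvex} is that here controls act through a drift term $\dot\xi$, so convex combinations of admissible (absolutely continuous) controls are again admissible, and there are no jumps to worry about.

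First, for the lower bound: fix $(t,x)$ and use the null control $\xi\equiv 0$ inside the infimum, so $v^{\eps,\delta}(t,x)\ge \inf_{\xi}(\dots)$ evaluated at any fixed $w$; it suffices to bound from below the integrand $H^\eps(t+s,X^0_s;w_s,0)=h(t+s,X^0_s)+w_s g(t+s)+\sup_{p}\{-\psi_\eps(p^2-\alpha_0^2)\}$. Since $\psi_\eps\ge 0$, the supremum over $p$ equals $0$ (attained at $p=0$), so $H^\eps(t+s,X^0_s;w_s,0)=h(t+s,X^0_s)+w_s g(t+s)$. Using $h\ge -c_0$ (bounded below by Assumption \ref{ass:1}, as noted after it), $0\le w_s\le 1/\delta$ and $g$ bounded on $[0,T]$, together with $0\le \e^{-\int_0^s(r+w_u)\ud u}\le 1$ and $r\ge 0$, one gets a lower bound of the form $-c$ with $c$ depending on $\sup|g|$, $c_0$, $T$ but genuinely independent of $(\eps,\delta)$ — the $1/\delta$ factor multiplies $g$ which could in principle be negative, so one must be slightly careful: take $w\equiv 0$ in the outer supremum instead, which is admissible in $\cT^\delta_t$, killing the problematic term entirely and giving $v^{\eps,\delta}(t,x)\ge -\E_x[\int_0^{\tau_\cO}\e^{-rs}c_0\,\ud s]-\|g\|_\infty \ge -(c_0 T+\|g\|_\infty)=:-c$.

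For monotonicity in $x$: this should follow by a pathwise comparison argument analogous to, but simpler than, Proposition \ref{prop:vincr}. Given $x\ge y$ in $\cO$, pick a near-optimal $\xi^\eps\in\cA^\circ_{t,x}$ for the inner infimum at $(t,x)$; construct $\bar\xi\in\cA^\circ_{t,y}$ by letting $X^{\bar\xi;y}$ run uncontrolled until it meets $X^{\xi^\eps;x}$ from below (they cannot cross without meeting by path-continuity, since both are continuous here) and then copying $\xi^\eps$ afterwards, so that $X^{\bar\xi;y}_s\le X^{\xi^\eps;x}_s$ for all $s$ and the exit times match. Since $h_x\ge 0$ and the control-cost term $\sup_p\{p\dot\xi_s-\psi_\eps(\cdot)\}$ depends only on $\dot\xi_s$ (which is identical on the copied portion, and zero on the uncontrolled portion where the cost is also zero), one obtains $v^{\eps,\delta}(t,x)\ge v^{\eps,\delta}(t,y)$ after letting $\eps\downarrow 0$. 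One subtlety: the ``near-optimality'' must be in the inner infimum uniformly over $w$, or equivalently one exploits the swap of sup and inf in \eqref{eq:pen_prb_rep}; I would write the argument using the inf-sup order so that $\xi$ is chosen first.

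For convexity in $x$: fix $t$, $x_1,x_2\in\cO$, $\lambda\in[0,1]$, $x_\lambda=\lambda x_1+(1-\lambda)x_2$. Pick near-optimal $\xi^{i,\eta}\in\cA^\circ_{t,x_i}$ and set $\xi^\lambda=\lambda\xi^{1,\eta}+(1-\lambda)\xi^{2,\eta}\in\cA^\circ_{t,x_\lambda}$ (admissible: it is absolutely continuous; when $\cO=(0,\infty)$ one truncates at the exit time exactly as in Proposition \ref{prop:vconvex}, and by Lemma \ref{lem:convexity} $X^{\xi^\lambda;x_\lambda}\le \lambda X^{\xi^{1,\eta};x_1}+(1-\lambda)X^{\xi^{2,\eta};x_2}$ up to the exit time $\theta_\lambda$, with $\theta_\lambda\le\tau_1\vee\tau_2$). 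Then estimate $\lambda v^{\eps,\delta}(t,x_1)+(1-\lambda)v^{\eps,\delta}(t,x_2)-v^{\eps,\delta}(t,x_\lambda)$ from below by a single expectation in which: (a) the running term $h$ is handled by convexity of $h(t,\cdot)$ (Assumption \ref{ass:1}(ii): $h_x$ non-decreasing in $x$) together with $h_x\ge 0$ and the ordering of the processes, exactly mirroring \eqref{eq:conv01}; (b) the control-cost term is handled because $\dot\xi^\lambda_s=\lambda\dot\xi^{1,\eta}_s+(1-\lambda)\dot\xi^{2,\eta}_s$ and the map $\zeta\mapsto\sup_p\{p\zeta-\psi_\eps(p^2-\alpha_0^2)\}$ is convex (it is a Legendre-type transform, hence convex in $\zeta$), so $\lambda(\text{cost}_1)+(1-\lambda)(\text{cost}_2)\ge(\text{cost}_\lambda)$ pointwise in $s$; (c) the discount weights and the terminal term $g$ require using the same $w$ in all three payoffs and, on $\{\cO=(0,\infty)\}$, the boundary-overshoot terms near $\theta_\lambda$ are dealt with via $\Theta(t,0)\le 0$ and the sub-interval estimates \eqref{eq:tau12}, just as in \eqref{eq:conv02}--\eqref{eq:conv03}.

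I expect the main obstacle to be the bookkeeping when $\cO=(0,\infty)$ in the convexity step: matching exit times, controlling the contributions on the random intervals $[\![\tau_{1,2},\theta_\lambda]\!]$ where only one of the two processes has been absorbed, and ensuring the truncated convex-combination control $\xi^\lambda$ is genuinely in $\cA^\circ_{t,x_\lambda}$. Fortunately this is essentially a replay of the argument already carried out for $v$ in Proposition \ref{prop:vconvex}, with the extra (and easier) observation that the penalisation/cost term is convex in the control rate; the monotonicity and lower-bound parts are routine. I would therefore present the convexity proof by explicit reference to Proposition \ref{prop:vconvex}, only spelling out the new ingredient, namely convexity of $\zeta\mapsto\sup_p\{p\zeta-\psi_\eps(p^2-\alpha_0^2)\}$ and convexity of $h(t,\cdot)$.
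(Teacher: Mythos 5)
Your plan is essentially correct and, for the monotonicity and convexity parts, matches the paper's (terse) argument, which simply refers to Propositions \ref{prop:vincr} and \ref{prop:vconvex} together with the fact that $x\mapsto H^\eps(t,x;\lambda,\zeta)$ is increasing and convex. You usefully make explicit the additional ingredient that $\zeta\mapsto\sup_p\{p\zeta-\psi_\eps(p^2-\alpha_0^2)\}$ is convex (a supremum of maps affine in $\zeta$), which the paper leaves implicit but which is genuinely needed when replaying the Proposition \ref{prop:vconvex} argument with the penalised cost in place of $\alpha_0\,\ud|\nu|$. For the lower bound your route differs from the paper's: you fix $w\equiv 0$ in the outer supremum to kill the $w_s g$ term and then bound the remaining integrand pointwise, \emph{uniformly in $\xi$}, by $h\ge-c_0$ and non-negativity of the Legendre-type term, together with $|g(t+\tau_\cO)|\le\|g\|_\infty$. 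The paper instead keeps $w$ general, integrates by parts the terminal term $\e^{-\int_0^{\tau_\cO}(r+w_s)\ud s}g(t+\tau_\cO)$ so that the $w_s g(t+s)$ contribution from $H^\eps$ cancels, recognizes the surviving running cost as $\Theta(t+s,X^\xi_s)$, and then applies Assumption \ref{ass:1}(iv) directly to obtain $v^{\eps,\delta}\ge -K_1T+\min_{[0,T]}g$. Both bounds are $(\eps,\delta)$-uniform; the paper's ties more directly to the standing assumption on $\Theta$, while yours is slightly more elementary and avoids the integration by parts.

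One logical wrinkle to fix before writing this up: your opening line for the lower bound (``use the null control $\xi\equiv 0$ inside the infimum $\dots$ it suffices to bound $H^\eps(t+s,X^0_s;w_s,0)$'') is not valid, since plugging a specific $\xi$ into an infimum gives an \emph{upper} bound on that infimum, not a lower bound. You recognize this a few lines later and switch to the correct argument (set $w\equiv 0$, then bound the integrand pointwise for \emph{every} $\xi$); a clean version should simply begin there and drop the initial $\xi\equiv 0$ step.
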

\begin{proof}
Integration by parts yields
\begin{align*}
\E_{x}\Big[\e^{-\int_0^{\tau_\cO}(r+w_s)\ud s}g(t+\tau_\cO)\Big]=g(t)+\E_{x}\Big[\int_0^{\tau_\cO}\!\e^{-\int_0^s(r+w_u)\ud u}\Big(\dot g(t+s)-(r+w_s)g(t+s)\Big)\!\,\ud s\Big],
\end{align*}
for any $t\in[0,T]$ and $w\in\cT_t^\delta$. Thus, using that $\sup_{p\in\R}\big\{p\,\dot{\xi}_s-\psi_\varepsilon(p^2-\alpha_{0}^2)\big\}\geq 0$ (choose $p=0$) and plugging the equation above into the probabilistic representation \eqref{eq:pen_prb_rep}, we get
\begin{align*}
v^{\varepsilon,\delta}(t,x)\geq 
g(t)+\adjustlimits\sup_{w\in\cT^\delta_t}\inf_{\xi\in\cA^\circ_{t,x}}\E_{x}\Big[&\int_0^{\tau_\cO}\!\e^{-\int_0^s(r+w_u)\ud u}\Theta(t\!+\!s,X^{\xi}_s)\!\,\ud s\Big].
\end{align*}
Using Assumption \ref{ass:1}(iv), $w_s\ge 0$ and continuity of $g$, we have
$v^{\varepsilon,\delta}(t,x)\geq -K_1T+\min_{t\in[0,T]}g(t)$.

Monotonicity and convexity in the spatial variable are obtained repeating arguments as those in the proof of Propositions \ref{prop:vincr} and \ref{prop:vconvex} and using that $x\mapsto H^\eps(t,x;\lambda,\zeta)$ is increasing and convex. We omit further details for brevity. 
\end{proof}

Our next task is to prove that $v_{x}$ satisfies a variational inequality in the sense of distributions. We introduce the operator $\cG$ whose action on a regular function $\Gamma\in C^2(\R)$ is 
\begin{align}\label{eq:G}
(\cG\Gamma)(y)\coloneqq \frac{\sigma^2(y)}{2}\Gamma_{xx}(y)+\Big[\sigma(y)\sigma_x(y)+\mu(y)\Big]\Gamma_x(y)-\Big[r-\mu_x(y)\Big]\Gamma(y).
\end{align}
Let $\cU\subset[0,T]\times\cO$ be an arbitrary set and $\ell\in L^\infty_{\ell oc}(\cU)$ (by our convention $L^\infty_{\ell oc}(\cU)=L^\infty(\cU)$ when $\cU$ is bounded). Following \cite{lamberton2008critical}, we say that a function $f$ is a solution of $(\partial_tf+\cG f) \geq \ell$ on $\cU$ in the sense of distributions if 
\begin{align*}
\int_0^T\!\!\int_{\cO}\!f(t,x) \big[(-\partial_t+\cG^*)\varphi\big](t,x)\,\ud x \ud t\geq \int_0^T\!\!\int_\cO\! \ell(t,x)\varphi(t,x)\,\ud x\,\ud t, 
\end{align*}
for any {\em non-negative} test function $\varphi\in C^\infty_c(\cU)$ and with $\cG^*$ the adjoint of $\cG$. For future reference, we notice that $\cG^*$ acts on $\Gamma\in C^2(\R)$ as 
\begin{align}\label{eq:G*}
(\cG^*\Gamma)(y)\coloneqq \tfrac{\partial^2}{\partial x^2}\Big(\tfrac{\sigma^2(y)}{2}\Gamma(y)\Big)-\tfrac{\partial}{\partial x}\Big(\big(\sigma(y)\sigma_x(y)+\mu(y)\big)\Gamma(y)\Big)-\big(r-\mu_x(y)\big)\Gamma(y).
\end{align}

\begin{theorem}\label{thm:v_xVI}
The function $v_x$ is a (continuous) solution in the sense of the distributions of 
\begin{align}\label{eq:vxPDEd}
\big(\partial_t v_x+\cG v_x\big)(t,x)\geq -h_x(t,x),\quad (t,x)\in\cC,
\end{align}
with $v_x(t,a(t))=0$, for $t\in[0,T)$ such that $a(t)>\inf\cO$, and $v_x(T,x)=0$ for $x\in\cO$. Moreover, the function $v_x$ is a {\em classical solution} of
\begin{align}\label{eq:vxPDE}
\big(\partial_tv_x+\cG v_x\big)(t,x)= -h_x(t,x),\quad (t,x)\in\cC\cap\cI.
\end{align}
\end{theorem}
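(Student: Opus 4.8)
The plan is to obtain \eqref{eq:vxPDEd} by differentiating the penalised equation \eqref{eq:PDE_pen} in the spatial variable and letting $\eps,\delta\to 0$, exploiting the monotonicity and convexity of $v^{\eps,\delta}$ from Lemma \ref{lem:vepsdelta}; the classical regularity on $\cC\cap\cI$ will then follow from interior Schauder estimates. The starting observation is the algebraic identity $\partial_x\circ(\cL-r)=\cG\circ\partial_x$ on smooth functions — a one-line check against the definition \eqref{eq:G}, where it is crucial that $\tfrac12(\sigma^2)_x=\sigma\sigma_x$ — which therefore also holds in the sense of distributions for functions in $W^{1,2;p}_{\ell oc}$. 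Hence $\partial_x\circ(\partial_t+\cL-r)=(\partial_t+\cG)\circ\partial_x$.

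Since $v^{\eps,\delta}\in C^{1,2;\gamma}_{\ell oc}([0,T)\times\cO)$ is a classical solution of \eqref{eq:PDE_pen}, its right-hand side $F^{\eps,\delta}:=-h-\tfrac1\delta(g-v^{\eps,\delta})^++\psi_\eps(|v^{\eps,\delta}_x|^2-\alpha_0^2)$ belongs to $W^{1,\infty}_{\ell oc}$ in the variable $x$, with
\[
\partial_x F^{\eps,\delta}=-h_x+\tfrac1\delta\,\mathds{1}_{\{v^{\eps,\delta}<g\}}\,v^{\eps,\delta}_x+2\,\psi_\eps'\!\big(|v^{\eps,\delta}_x|^2-\alpha_0^2\big)\,v^{\eps,\delta}_x\,v^{\eps,\delta}_{xx}\qquad\text{a.e.}
\]
The last two terms are non-negative: $\psi_\eps'\ge 0$ since $\psi_\eps$ is non-decreasing, while $v^{\eps,\delta}_x\ge 0$ and $v^{\eps,\delta}_{xx}\ge 0$ since $x\mapsto v^{\eps,\delta}(t,x)$ is non-decreasing and convex (Lemma \ref{lem:vepsdelta}). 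Thus $\partial_x F^{\eps,\delta}\ge -h_x$ a.e., and differentiating \eqref{eq:PDE_pen} in $x$ in the sense of distributions and invoking the intertwining identity shows that $v^{\eps,\delta}_x$ solves $(\partial_t+\cG)v^{\eps,\delta}_x=\partial_x F^{\eps,\delta}$ distributionally on $[0,T)\times\cO$. Pairing this with an arbitrary non-negative $\varphi\in C^\infty_c(\cC)$ gives $\int\!\!\int v^{\eps,\delta}_x\,(-\partial_t+\cG^*)\varphi=\int\!\!\int(\partial_x F^{\eps,\delta})\varphi\ge\int\!\!\int(-h_x)\varphi$, and since $v^{\eps,\delta}_x\to v_x$ locally uniformly by \eqref{eq:prp_pen_prb} while $(-\partial_t+\cG^*)\varphi$ is a fixed continuous function with compact support, letting $\eps,\delta\to0$ yields $\int\!\!\int v_x\,(-\partial_t+\cG^*)\varphi\ge\int\!\!\int(-h_x)\varphi$, i.e.\ \eqref{eq:vxPDEd}. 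The boundary identities are immediate: for $x\le a(t)$ one has $(t,x)\in\cS$, hence $v(t,x)=g(t)$ and $v_x\equiv0$ there, so $v_x(t,a(t))=0$ by continuity of $v_x$ whenever $\inf\cO<a(t)$; similarly $v_x(T,\cdot)\equiv0$ because $v(T,\cdot)=g(T)$.

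For the final claim, assume $\cC\cap\cI\neq\varnothing$. On this open set $v$ is a classical solution of the linear equation $(\partial_t+\cL-r)v=-h$ (cf.\ \eqref{eq:PDE}); since $h$ is $\gamma$-H\"older and the coefficients are smooth, interior parabolic Schauder estimates give $v\in C^{1,2;\gamma}_{\ell oc}(\cC\cap\cI)$. As $h_x$ is $\gamma$-H\"older (Assumption \ref{ass:1}(ii)), a further application of interior Schauder estimates — most transparently to the spatial difference quotients $\theta^{-1}\big(v(\cdot,\cdot+\theta)-v(\cdot,\cdot)\big)$, which solve linear equations with $C^{0,\gamma}_{\ell oc}$ right-hand sides, are bounded in $C^{1,2;\gamma}_{\ell oc}$ uniformly in $\theta$, and converge to $v_x$ — gives $v_x\in C^{1,2;\gamma}_{\ell oc}(\cC\cap\cI)$. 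Differentiating $(\partial_t+\cL-r)v=-h$ in $x$ then upgrades \eqref{eq:vxPDEd} to the pointwise identity \eqref{eq:vxPDE} on $\cC\cap\cI$.

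The point requiring the most care is the distributional differentiation in the second step: $v^{\eps,\delta}$ is only $C^{1,2;\gamma}_{\ell oc}$ — not $C^{1,3}$, because the obstacle penalty $(g-v^{\eps,\delta})^+$ is merely Lipschitz — so the equation for $v^{\eps,\delta}_x$ must be read in $\cD'$ from the outset, and one must verify that $\partial_x\big[(\cL-r)v^{\eps,\delta}\big]=\cG v^{\eps,\delta}_x$ holds as an identity of distributions. The cleanest way is to transfer the $x$-derivative onto the test function using the adjoint relation $\partial_x\cG^*=(\cL-r)^*\partial_x$ together with the classical $C^{1,2}$ regularity of $v^{\eps,\delta}$. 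The remaining ingredients — the sign analysis of the displayed formula (which rests entirely on Lemma \ref{lem:vepsdelta}) and the stability of the one-sided inequality under the limit $\eps,\delta\to0$ — are routine.
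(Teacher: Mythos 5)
Your proposal is correct, and it takes a genuinely different route from the paper's. The paper proves the distributional inequality by restricting attention to the sublevel sets $\cC_n=\{v>g+\tfrac1n\}$ intersected with a bounded set $U$: the uniform convergence $v^{\eps,\delta}\to v$ guarantees that for $\eps,\delta$ small the penalty term $\tfrac1\delta(g-v^{\eps,\delta})^+$ vanishes there, so the penalised PDE becomes the clean equation \eqref{eq:os0} with a $C^{0,1;\gamma}$ right-hand side; one can then invoke interior parabolic regularity to get $v^{\eps,\delta}_x\in C^{1,2;\gamma}(\cC_n\cap U)$ and differentiate classically, before testing and passing to the limit. You instead keep the penalty term, differentiate the whole equation in the sense of distributions over $(0,T)\times\cO$, and exploit the simple sign observation that $\partial_x\big[\!-\!\tfrac1\delta(g-v^{\eps,\delta})^+\big]=\tfrac1\delta\mathds{1}_{\{v^{\eps,\delta}<g\}}v^{\eps,\delta}_x\ge 0$. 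What this buys: you avoid the entire $\cC_n$/$U$ localisation machinery and the estimate \eqref{eq:low}, and you actually establish the inequality $(\partial_t+\cG)v_x\ge -h_x$ in $\cD'$ on all of $(0,T)\times\cO$, not merely on $\cC$ — a slightly stronger conclusion. The price is the distributional-differentiation care you correctly flagged: since $(g-v^{\eps,\delta})^+$ is only Lipschitz, the equation for $v^{\eps,\delta}_x$ must be read weakly, which you handle cleanly by pushing $\partial_x$ onto the test function via the adjoint intertwining identity $\partial_x\cG^*=(\cL-r)^*\partial_x$ (which checks out upon expansion) together with the $C^{1,2}$ regularity of $v^{\eps,\delta}$ and the chain rule $\partial_x f^+=\mathds{1}_{\{f>0\}}\partial_x f$ a.e.\ for $W^{1,1}_{\ell oc}$ functions. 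The classical regularity on $\cC\cap\cI$ is treated essentially as in the paper (interior Schauder estimates applied to the linear equation $(\partial_t+\cL-r)v=-h$), with your difference-quotient variant being a standard alternative to citing \cite[Thm.~3.10]{friedman2008partial} directly.
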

\begin{proof}
We prove the second claim first. Since $\cC\cap\cI$ is an open set we can take an arbitrary open subset $\cR\subset\cC\cap\cI$ with smooth parabolic boundary. 
Recall that $v$ is classical solution of \eqref{eq:PDE}. Since the coefficients of the infinitesimal operator $\cL$ admit $\gamma$-H\"older derivative in $\cR$, then classical interior regularity results for parabolic PDEs \cite[Thm.\ 3.10]{friedman2008partial} yield the analogue of \eqref{eq:vxPDE} in $\cR$. The result extends to $\cC\cap\cI$ by arbitrariness of $\cR$. 

Now we prove the first claim. The boundary conditions at $\{(t,a(t)),\,t\in[0,T)\}$ and at $\{T\}\times\cO$ are obvious by continuity of $v_x$. Recall $\cC=\{v>g\}$ and for $n\in\N$ let us define 
\begin{align}\label{eq:Cn}
\cC_n=\{(t,x)\in [0,T]\times \cO: v(t,x)>g(t)+\tfrac{1}{n}\}.
\end{align}
The set $\cC_n$ is open and connected\footnote{Actually, an adaptation of Proposition \ref{prop:a-rc} would allow us to prove that there is $t\mapsto a_n(t)$ which is non-decreasing and right-continuous, with $a_n(t)<\infty$ for $t\in[0,T)$ and $a_n(T-)=+\infty$, and such that $\cC_n=\{(t,x):x>a_n(t)\}$. We omit these calculations because we do not need to work with the boundary $a_n$.} because $v$ and $g$ are continuous with $v_x\geq 0$ and $v_t-\dot g\le 0$ (cf.\ step 1 in the proof of Proposition \ref{prop:a-rc}). We also notice that $\cC_n\subset [0,T)\times\cO$, because $v(T,x)=g(T)$.

Take an arbitrary compact set $K\subset \cC$. Then take an open bounded set $U\subseteq[0,T]\times\cO$ such that $K\subset U$. Since $\cC$ is open and $(\cC_n)_{n\in\N}$ is an increasing sequence of open sets converging to $\cC$, then we can find $n_K\in\N$ sufficiently large that $K\subset\cC_n\cap U$ for all $n\ge n_K$. We assume with no loss of generality that from now on $n\ge n_K$.

Recalling $v^{\eps,\delta}$ from \eqref{eq:PDE_pen}, the uniform convergence in \eqref{eq:prp_pen_prb}, guarantees that there exist $\eps_{n,U}>0$ and $\delta_{n,U}>0$ such that 
\begin{align}\label{eq:low}
\inf_{(t,x)\in \overline{\cC_n\cap U}}\big(v^{\eps,\delta}(t,x)-g(t)\big)\geq \tfrac{1}{2n},
\end{align}
for all $\eps\in(0,\eps_{n,U})$ and $\delta\in(0,\delta_{n,U})$.
It follows from \eqref{eq:PDE_pen} that
\begin{align}\label{eq:os0}
\big[(\partial_t+\cL-r)v^{\eps,\delta}\big](t,x)=-h+\psi_\eps(|v_x^{\eps,\delta}(t,x)|^2-\alpha_0^2) \quad\text{for $(t,x)\in\cC_n\cap U$,}
\end{align}
in the classical sense, i.e., with continuous derivatives. Since $v^{\eps,\delta}\in C^{1,2;\gamma}_{\ell oc}([0,T)\times\cO)$ for any $\gamma\in(0,1)$, then 
\begin{align}\label{eq:os1}
-h+\psi_\eps(|v_x^{\eps,\delta}|^2-\alpha_0^2)\in C^{0,1;\gamma}_{\ell oc}([0,T)\times\cO),
\end{align} 
for any fixed $\eps>0$, $\delta>0$.
Moreover, the spatial derivative reads
\begin{align}\label{eq:os2}
-h_x+2v_x^{\eps,\delta}v_{xx}^{\eps,\delta}\dot{\psi_\eps}(|v_x^{\eps,\delta}|^2-\alpha_0^2)\ge -h_x,
\end{align}
where the inequality holds because $\dot\psi_\eps\ge 0$ and $v^{\eps,\delta}(t,\cdot)$ is non-decreasing and convex (Lemma \ref{lem:vepsdelta}).

Thanks to interior regularity results for parabolic PDEs (see \cite[Thm.\ 3.10]{friedman2008partial}) it follows from \eqref{eq:os1} and the regularity of $\mu$ and $\sigma$ (Assumption \ref{ass:1a}) that $v_x^{\eps,\delta}\in C^{1,2;\gamma}(\cC_n\cap U)$. Differentiating \eqref{eq:os0} and using \eqref{eq:os2} we obtain
\begin{align}\label{eq:vxepsdel}
\big[(\partial_t+\cG)v^{\eps,\delta}_x\big](t,x)+h_x(t,x)\geq0 \quad \text{for $(t,x)\in\cC_n\cap U$,}
\end{align}
and any $(\eps,\delta)\in(0,\eps_{n,U})\times(0,\delta_{n,U})$, with $\cG$ as in \eqref{eq:G}. 

Recall that $K\subset\cC_n\cap U$ and let $\varphi^K\in C^\infty_c(K)$ be a non-negative test function. Multiply both sides of \eqref{eq:vxepsdel} by $\varphi^K$ and integrate by parts to obtain
\begin{align*}
\int_0^T\!\int_\cO v^{\eps,\delta}_x(t,x)\big[(-\partial_t\varphi^K+\cG^*\varphi^K)(t,x)\big]\ud x\ud t\ge -\int_0^T\!\int_\cO h_x(t,x)\varphi^K(t,x)\ud x \ud t.
\end{align*} 
Letting $\varepsilon,\delta\to0$, we can use the dominated convergence theorem because $v^{\varepsilon,\delta}_x$ is locally bounded in $[0,T]\times\cO$, uniformly in $(\eps,\delta)$. Since $v^{\varepsilon,\delta}_x\to v_x$ uniformly on compacts (see \eqref{eq:prp_pen_prb}), it follows that
\begin{align}\label{eq:distribution}
\int_0^T\!\int_{\cO}v_x(t,x)\big[(-\partial_t\varphi^K +\cG^*\varphi^K)(t,x)\big]\,\ud x\,\ud t\geq \int_0^T\!\int_{\cO}\!-h_x(t,x)\varphi^K(t,x)\,\ud x\,\ud t.
\end{align}
Since $K\subset\cC\cap U$ was arbitrary and $U\in[0,T]\times\cO$ was also arbitrary, we can conclude that 
$v_x$ solves
$\big(\partial_tv_x+\cG v_x\big)\geq -h_x$ in $\cC$ in the sense of distributions.
\end{proof}

Let us now introduce an auxiliary dynamics which will appear in the probabilistic representation of $v_x$. 
Since $\mu$ is locally Lipschitz and $\sigma$ is affine, denoting $\sigma_x$ the spatial derivative of $\sigma$, the SDE 
\begin{align}\label{eq:SDEY}
 Y_t=y+\int_0^t \big(\mu(Y_s)+\sigma(Y_s)\sigma_x(Y_s)\big)\ud s+ \int_0^t\sigma(Y_s) \ud W_s,\quad t\in[0,T],
\end{align}
admits a unique strong solution. We notice that the end-points of $\cO$ remain unattainable for the process $Y$. This is clear for $\cO=\R$, because the SDE is non-explosive. For $\cO=(0,\infty)$ we additionally notice that $\sigma(y)\sigma_x(y)=\sigma_1^2 y>0$, so the process $Y$ has a larger positive drift than $X^0$. 

We denote
\begin{align}\label{eq:taua}
\tau_a=\tau_a(t,y)\coloneqq \inf\{s\ge 0: Y_{s}^y\leq a(t+s)\}\wedge(T\!-\!t),
\end{align}
the exit time of the process $(t+s,Y_s^y)$ from the set $\cC$. We also define $\lambda(y)\coloneqq r-\mu_x(y)$, which will be the discount factor in \eqref{eq:optstop} below.

We know a priori that $v_x$ is a bounded continuous function, with $0\leq v_x\leq \alpha_{0}$. Moreover, it solves \eqref{eq:vxPDEd} in the sense of distributions and \eqref{eq:vxPDE} in the classical sense. In principle, we would like to apply the verification theorem \cite[Prop.\ 2.5]{lamberton2008critical} but upon close inspection of the proof, it turns out that our state-dependent coefficients cause some technical difficulties. To overcome those, we take a different approach based once again on the penalised problem \eqref{eq:PDE_pen}.

\begin{theorem}\label{thm:prbrprvx}
For $(t,y)\in[0,T]\times \cO$ we have 
\begin{align}\label{eq:optstop}
v_x(t,y)=\inf_{\tau\in\cT_t}\E\Big[\e^{-\int_0^\tau\lambda(Y_u^y)\ud u}\alpha_{0}\mathds{1}_{\{\tau< \tau_a(t,y)\}}+\int_{0}^{\tau\wedge\tau_a(t,y)}\!\e^{-\int_0^s\lambda(Y_u^y)\ud u}h_x(t+s,Y_s^y)\,\ud s\Big].
\end{align}

Moreover, the stopping time $\sigma_*=\sigma_*(t,y)$ defined as
\begin{align}\label{eq:tau*vx}
\sigma_*\coloneqq\inf\{s\ge 0 :v_x(t+s,Y_s^{y})\geq\alpha_{0}\}\wedge(T-t),
\end{align}
is optimal.
\end{theorem}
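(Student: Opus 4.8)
The plan is to obtain \eqref{eq:optstop} by passing to the limit in a corresponding probabilistic representation for the penalised derivative $v^{\eps,\delta}_x$, exploiting the convergences recalled in \eqref{eq:prp_pen_prb}. First I would differentiate the semilinear PDE \eqref{eq:PDE_pen} (which is legitimate in the classical sense on $[0,T)\times\cO$ after the interior-regularity bootstrap used in the proof of Theorem~\ref{thm:v_xVI}, giving $v^{\eps,\delta}_x\in C^{1,2;\gamma}_{\ell oc}$): setting $q^{\eps,\delta}\coloneqq v^{\eps,\delta}_x$, one gets
\begin{align*}
(\partial_t+\cG)q^{\eps,\delta}=-h_x-\tfrac1\delta\mathds 1_{\{v^{\eps,\delta}<g\}}q^{\eps,\delta}+2q^{\eps,\delta}v^{\eps,\delta}_{xx}\dot\psi_\eps\big(|q^{\eps,\delta}|^2-\alpha_0^2\big),
\end{align*}
with terminal condition $q^{\eps,\delta}(T,\cdot)=0$ and, when $\cO=(0,\infty)$, a boundary term at $0$. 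The idea is then to read this equation as the Hamilton--Jacobi--Bellman equation of an optimal stopping problem (or, more precisely, of a small control-vs-stopping problem of the same penalised type as \eqref{eq:pen_prb_rep}) for the diffusion $Y$ with generator $\tfrac{\sigma^2}{2}\partial_{xx}+(\sigma\sigma_x+\mu)\partial_x$, discount rate $\lambda(y)=r-\mu_x(y)$, running cost $h_x$, the extra killing rate $\tfrac1\delta\mathds1_{\{v^{\eps,\delta}<g\}}$, and an $\eps$-penalisation of the gradient-type constraint $q\le\alpha_0$. Applying It\^o's formula to $s\mapsto\e^{-\int_0^s(\lambda(Y_u)+\text{killing})\ud u}q^{\eps,\delta}(t+s,Y_s)$ and the standard penalisation identities yields a representation of $v^{\eps,\delta}_x(t,y)$ as an $\inf$ over admissible auxiliary controls/stopping rules.

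Next I would take $\eps,\delta\to0$. The killing term $\tfrac1\delta\mathds1_{\{v^{\eps,\delta}<g\}}$ forces, in the limit, absorption of $Y$ upon entering $\{v<g\}=\cS$, i.e.\ at the boundary $a$; here one uses that $\cS$ has nonempty interior away from $a$ (by Proposition~\ref{prop:a-rc}, $a<\underline\Theta$ and $a$ is non-decreasing) together with the uniform lower bound \eqref{eq:low} on compact subsets of $\cC$, exactly as in the proof of Theorem~\ref{thm:v_xVI}, to show that the cheapest strategy against the controller is to use the hitting time $\tau_a$ of \eqref{eq:taua} as an absorption time. The $\eps$-penalisation of $q\le\alpha_0$ converges to the obstacle $\alpha_0$, producing the gain $\e^{-\int_0^\tau\lambda\,\ud u}\alpha_0\mathds1_{\{\tau<\tau_a\}}$ upon voluntary stopping; the bound \eqref{eq:boundpsi} on $\psi_\eps(|v^{\eps,\delta}_x|^2-\alpha_0^2)$ is what keeps the penalisation term equi-integrable so that the auxiliary control disappears in the limit (it is driven to $0$, consistently with $v_x\le\alpha_0$ globally from \eqref{eq:varine}). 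Uniform local boundedness of $v^{\eps,\delta}_x$ and its uniform-on-compacts convergence to $v_x$ (both from \eqref{eq:prp_pen_prb}) let me pass to the limit by dominated convergence in each of the two terms, and the unattainability of $\partial\cO$ for $Y$ (noted right after \eqref{eq:SDEY}) handles the localisation of It\^o's formula. This gives \eqref{eq:optstop}.

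For the optimality of $\sigma_*$ in \eqref{eq:tau*vx}, the plan is the usual verification-type argument now that the representation \eqref{eq:optstop} is in hand and $v_x$ is known to be a continuous function satisfying $0\le v_x\le\alpha_0$, to solve \eqref{eq:vxPDE} classically on $\cC\cap\cI$ (Theorem~\ref{thm:v_xVI}), to equal $\alpha_0$ on $\cM\cap\cC$ and to equal $0$ on $\cS$. One checks that the process
\begin{align*}
M_s\coloneqq\e^{-\int_0^s\lambda(Y_u^y)\ud u}v_x(t+s,Y_s^y)\mathds1_{\{s<\tau_a\}}+\int_0^{s\wedge\tau_a}\e^{-\int_0^u\lambda\,\ud u}h_x(t+u,Y_u^y)\ud u
\end{align*}
is a submartingale up to $\tau_a$ (using \eqref{eq:vxPDEd} in the distributional sense together with the fact that $v_x\in W^{1,2;p}_{\ell oc}$, so that a generalised It\^o/Krylov formula applies and the drift $(\partial_t+\cG)v_x+h_x$ is $\ge0$ a.e.\ on $\cC$), and that it is a martingale up to $\sigma_*\wedge\tau_a$ because on $\{v_x<\alpha_0\}=\cI$ one is inside $\cC\cap\cI$ where \eqref{eq:vxPDE} holds with equality. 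Optional stopping then gives $v_x(t,y)\le\E[\,\e^{-\int_0^\tau\lambda\,\ud u}v_x(t+\tau,Y_\tau)\mathds1_{\{\tau<\tau_a\}}+\cdots]\le\E[\text{payoff of }\tau]$ for every $\tau$ (using $v_x\le\alpha_0$ at a voluntary stop and $v_x=0$ on $\cS$ at $\tau_a$), with equality at $\tau=\sigma_*$ (using $v_x(t+\sigma_*,Y_{\sigma_*})=\alpha_0$ by continuity on $\{\sigma_*<\tau_a\}$ and $v_x=0$ on $\{\sigma_*=\tau_a\}$, i.e.\ $v_x(T,\cdot)=0$).

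The main obstacle I anticipate is the limit $\delta\to0$ in the killing term: controlling $\tfrac1\delta\E[\int_0^{\tau}\mathds1_{\{v^{\eps,\delta}(t+s,Y_s)<g(t+s)\}}\,\cdots\,\ud s]$ and identifying its limit with pure absorption at $\tau_a$ requires showing that $Y$ does not spend positive time in the shrinking strip $\{g-\tfrac1n<v^{\eps,\delta}<g\}$ in a way that survives the limit, and that trajectories of $Y$ which reach $a$ are genuinely absorbed (not merely slowed). This is exactly the point where the geometry of $\cS$ from Proposition~\ref{prop:a-rc} (so that once below $a$ the process is deep inside $\cS$ where the penalisation blows up) and the regularity of the boundary $a$ are used; a secondary technical nuisance is the boundary behaviour at $0$ when $\cO=(0,\infty)$, handled by the non-attainability of $0$ for $Y$ and the fact that $a(t)>0$ for $t>0$, so $\tau_a$ occurs strictly inside $\cO$.
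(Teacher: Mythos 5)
Your proposal shares the paper's broad strategy — use the penalised PDE \eqref{eq:PDE_pen} and pass to the limit — but it misses the crucial localisation device the paper uses, and this leaves several genuine gaps.

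First, you assert that differentiating \eqref{eq:PDE_pen} on all of $[0,T)\times\cO$ gives $v^{\eps,\delta}_x\in C^{1,2;\gamma}_{\ell oc}([0,T)\times\cO)$ "after the interior-regularity bootstrap used in Theorem \ref{thm:v_xVI}." That bootstrap only yields $v^{\eps,\delta}_x\in C^{1,2;\gamma}(\cC_n\cap U)$, precisely because on $\cC_n\cap U$ (where $v^{\eps,\delta}>g$, by \eqref{eq:low}) the term $\frac1\delta(g-v^{\eps,\delta})^+$ is identically zero, so the right-hand side of \eqref{eq:os0} is H\"older. Globally, the differentiated right-hand side carries the discontinuous coefficient $\mathds1_{\{v^{\eps,\delta}<g\}}$, and the Schauder-type bootstrap does not apply across $\{v^{\eps,\delta}=g\}$; the It\^o formula you want to write for $v^{\eps,\delta}_x$ on $[0,T)\times\cO$ is therefore not justified. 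The paper avoids this by only ever applying It\^o on $\cC_n\cap U$ and taking limits afterwards.

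Second — and this is the fatal gap, which you yourself flag as the "main obstacle" — the identification of the limiting killing term $\frac1\delta\mathds1_{\{v^{\eps,\delta}<g\}}$ with absorption of $Y$ at $\tau_a$ is not an incidental technicality that one can point at; it is a hard estimate on occupation times near a free boundary, of a type the paper is deliberately structured to avoid. The paper's key idea, which you do not reproduce, is that one can completely bypass both penalty terms: for the $\le$ direction, localise to $\cC_n\cap U$ where $\frac1\delta(g-v^{\eps,\delta})^+\equiv 0$ and use the resulting inequality \eqref{eq:vxepsdel}; for the $\ge$ direction, localise further to $\cC_n\cap\cI_n\cap U$ where \emph{also} $\psi_\eps(|v^{\eps,\delta}_x|^2-\alpha_0^2)\equiv 0$, so \eqref{eq:os3} holds with equality. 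Then $\rho_n\uparrow\tau_a$ and $\sigma_n\uparrow\sigma_*$ do all the work in the limit, and the penalty terms never enter as random quantities. Without that device your plan reduces $\delta\to 0$ to an unproved claim.

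Third, your optimality argument for $\sigma_*$ invokes a generalised It\^o--Krylov formula for $v_x$ on the strength of "$v_x\in W^{1,2;p}_{\ell oc}$." But the paper only establishes $v\in W^{1,2;p}_{\ell oc}$, i.e.\ $v_{xx}\in L^p_{\ell oc}$; it does \emph{not} establish $v_{xxx}, v_{xt}\in L^p_{\ell oc}$, so $v_x$ is not known to lie in $W^{1,2;p}_{\ell oc}$ and Krylov's formula for $v_x$ is unavailable. The paper is explicit that the Lamberton--Zervos verification theorem cannot be applied "upon close inspection" because of the state-dependent coefficients, and for exactly this reason the equality in \eqref{eq:optstop} and the optimality of $\sigma_*$ are obtained by running the penalised It\^o argument again with the stopping times $\sigma_n$, rather than by a direct verification applied to $v_x$. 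Your verification step would need an argument to replace that unavailable regularity before it could be considered complete.
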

\begin{proof}
Here we recall the notation from the proof of Theorem \ref{thm:v_xVI}. Fix $(t,y)\in\cC_n$ (cf.\ \eqref{eq:Cn}) and let $U\subset[0,T]\times\cO$ be an open bounded set. Define
\begin{align*}
\rho_n\coloneqq\inf\{s\ge 0:(t+s,Y_s^y)\notin \cC_n\}\quad\text{ and }\quad\rho_U\coloneqq\inf\{s\ge 0 :(t+s,Y_s^y)\notin U\},
\end{align*}
(notice that both $\rho_n$ and $\rho_U$ are in $[0,T-t]$). Set $\bar\rho=\bar{\rho}_{n,U}\coloneqq \rho_{n}\wedge\rho_{U}$. 
Recall from the proof of Theorem \ref{thm:v_xVI} that we can choose $\eps_{n,U}, \delta_{n,U}>0$ such that \eqref{eq:low} holds for $\eps\in(0,\eps_{n,U})$ and $\delta\in(0,\delta_{n,U})$, hence implying also \eqref{eq:os0} and \eqref{eq:vxepsdel}. Moreover, $v^{\eps,\delta}_x\in C^{1,2;\gamma}(\cC_n\cap U)$ for any $\gamma\in(0,1)$. Hence, for any $\tau\in\cT_{t}$, an application of It\^o's formula yields
\begin{align}\label{eq:itovx} 
\begin{aligned}
v^{\eps,\delta}_x(t,y)=&\,\e^{-\int_0^{\bar{\rho}\wedge\tau}\lambda(Y_s^y)\,\ud s}
v^{\eps,\delta}_x(t\!+\!\bar\rho\wedge\tau,Y^y_{\bar\rho\wedge\tau})\!-\!\int_0^{\bar{\rho}\wedge\tau}\!\!\e^{-\int_0^{s}\lambda(Y_u^y)\,\ud u}(\partial_t\!+\!\cG)v^{\eps,\delta}_x(t\!+\!s,Y^y_{s})\,\ud s\\
&-\int_0^{\bar{\rho}\wedge\tau}\!\e^{-\int_0^{s}\lambda(Y_u^y)\,\ud u}v^{\eps,\delta}_{xx}(t+s,Y^y_{s})\sigma(Y^y_{s})\,\ud W_s.
\end{aligned}
\end{align}
Using \eqref{eq:vxepsdel}, since the stochastic integral is a martingale we obtain
\begin{align*}
v^{\eps,\delta}_x(t,y)\leq &\,\E\Big[\e^{-\int_0^{\bar{\rho}\wedge\tau}\lambda(Y_s^y)\,\ud s}v^{\eps,\delta}_x(t+\bar\rho\wedge\tau,Y^y_{\bar\rho\wedge\tau})+\int_0^{\bar{\rho}\wedge\tau}\!\e^{-\int_0^{s}\lambda(Y_u^y)\,\ud u}h_x(t+s,Y^y_{s})\,\ud s\Big].
\end{align*}
 
By uniform convergence \eqref{eq:prp_pen_prb} on compacts, we can pass to the limit as $\eps,\delta\downarrow0$ and we obtain
\begin{align}\label{eq:limvx1}
v_x(t,y)\leq &\,\E\Big[\e^{-\int_0^{\bar{\rho}\wedge\tau}\lambda(Y_s^y)\,\ud s}v_x(t+\bar\rho\wedge\tau,Y^y_{\bar\rho\wedge\tau})+\int_0^{\bar{\rho}\wedge\tau}\!\e^{-\int_0^{s}\lambda(Y_u^y)\,\ud u}h_x(t+s,Y^y_{s})\,\ud s\Big].
\end{align}
Letting $U\uparrow (0,T)\times\cO$ we have $\rho_U\uparrow T-t$, $\P$-a.s.,\ because the process $Y$ does not leave $\cO$ in finite time. Passing to the limit in \eqref{eq:limvx1}, the integral term converges by monotone convergence. Dominated convergence can be applied to the first term because $v_x$ is bounded and non-negative and, using $\lambda(y)=r-\mu_x(y)\ge -\mu_x(y)$, we have
\begin{align*}
0\le \e^{-\int_0^{\bar{\rho}\wedge\tau}\lambda(Y_s^y)\,\ud s}v_x(t+\bar\rho\wedge\tau,Y^y_{\bar\rho\wedge\tau})\leq \alpha_0\e^{\int_0^{\bar{\rho}\wedge\tau}\mu_x(Y_s^y)\,\ud s}\le \alpha_0\e^{T\sup_{y\in\cO}\mu_x(y)},
\end{align*}
where we recall that $\mu_x$ is bounded from above (Assumption \ref{ass:1a}).

In conclusion, letting $U\uparrow(0,T)\times\cO$ in \eqref{eq:limvx1} we obtain
\begin{align}\label{eq:limvx2}
v_x(t,y)\leq &\,\E\Big[\e^{-\int_0^{\rho_n\wedge\tau}\lambda(Y_s^y)\,\ud s}v_x(t\!+\!\rho_n\wedge\tau,Y^y_{\rho_n\wedge\tau})\!+\!\int_0^{\rho_n\wedge\tau}\!\!\e^{-\int_0^{s}\lambda(Y_u^y)\,\ud u}h_x(t\!+\!s,Y^y_{s})\,\ud s\Big].
\end{align}
Next we want to pass to the limit as $n\to\infty$. 
The sequence $(\rho_n)_{n\in\N}$ is non-decreasing with $\rho_n\le \tau_a$ (recall \eqref{eq:taua}) and its limit $\rho_\infty=\lim_{n\to\infty}\rho_n$ is well-defined with $\rho_\infty\le \tau_a$. By definition of $\rho_n$ it is not hard to show that $\rho_\infty=\inf\{s\ge 0:v(t+s,Y^y_s)\le g(t+s)\}\wedge(T-t)=\tau_a$, thanks to continuity of $s\mapsto v(t+s,Y^y_s)- g(t+s)$.
Letting $n\to\infty$ in \eqref{eq:limvx2} and again using dominated convergence and monotone convergence we obtain
\begin{align*}
v_x(t,y)\leq \E\Big[\e^{-\int_0^{\tau_a\wedge\tau}\lambda(Y_s^y)\,\ud s}v_x(t\!+\!\tau_a\wedge\tau,Y^y_{\tau_a\wedge\tau})1_{\{\tau\wedge\tau_a<T-t\}}\!+\!\int_0^{\tau_a\wedge\tau}\!\!\e^{-\int_0^{s}\lambda(Y_u^y)\,\ud u}h_x(t\!+\!s,Y^y_{s})\,\ud s\Big],
\end{align*}
where the indicator function appears because $v_x(T,\,\cdot\,)=0$.

On $\{\tau_a\le \tau\}\cap\{\tau_a<T-t\}$ we have $v_x(t+\tau_a,Y_{\tau_a}^y)=v_x(t+\tau_a,a(t+\tau_a))=0$. Then, recalling $v_x\le \alpha_0$ and $\tau_a\le T\!-\!t$, $\P$-a.s., we obtain
\begin{align}\label{eq:limvx3}
v_x(t,y)\leq \E\Big[\e^{-\int_0^{\tau}\lambda(Y_s^y)\,\ud s}\alpha_0\mathds{1}_{\{\tau<\tau_a(t,y)\}}+\int_0^{\tau_a(t,y)\wedge\tau}\!\e^{-\int_0^{s}\lambda(Y_u^y)\,\ud u}h_x(t+s,Y^y_{s})\,\ud s\Big],
\end{align}
where we restored the notation $\tau_a=\tau_a(t,y)$. By arbitrariness of $\tau\in\cT_t$, we have
\begin{align}\label{eq:limvx3a}
v_x(t,y)\leq \inf_{\tau\in\cT_t}\E\Big[\e^{-\int_0^{\tau}\lambda(Y_s^y)\,\ud s}\alpha_0\mathds{1}_{\{\tau<\tau_a(t,y)\}}+\int_0^{\tau_a(t,y)\wedge\tau}\!\e^{-\int_0^{s}\lambda(Y_u^y)\,\ud u}h_x(t+s,Y^y_{s})\,\ud s\Big].
\end{align}

Let us now show that the inequality is actually an equality. For $n\in\N$ let 
\[
\cI_n\coloneqq\{(t,x)\in[0,T]\times\cO: |v_x(t,x)|< \alpha_0-\tfrac1n\}.
\]
With no loss of generality we assume $\eps_{n,U}>0$ and $\delta_{n,U}>0$ are such that \eqref{eq:low} holds and, in addition, also 
\[
\sup_{(t,x)\in\overline{\cI_n\cap U}}\big(|v^{\eps,\delta}_x(t,x)|- \alpha_0\big)\le-\tfrac{1}{2n},
\]
for all $\eps\in(0,\eps_{n,U})$ and $\delta\in(0,\delta_{n,U})$.

Restricting the PDE in \eqref{eq:os0} to the set $\cI_n$ we obtain 
\begin{align}\label{eq:os3}
(\partial_t+\cL-r)v^{\eps,\delta}=-h \quad\text{ in $\cC_n\cap\cI_n\cap U$,}
\end{align}
in the classical sense, for all $\eps\in(0,\eps_{n,U})$ and $\delta\in(0,\delta_{n,U})$. Differentiating with respect to $x$ yields
\begin{align*}
(\partial_t+\cG)v^{\eps,\delta}_x(t,x)+h_x(t,x)=0 \quad \text{for all $(t,x)\in\cC_n\cap\cI_n\cap U$,}
\end{align*}
also in the classical sense. Now, replacing $\tau$ in \eqref{eq:itovx} with 
$\sigma_n\coloneqq\inf\{s\ge 0:(t\!+\!s,Y^y_s)\notin \cI_n\}\wedge(T-t)$,
we obtain
\begin{align*}
v^{\eps,\delta}_x(t,y)= &\,\E\Big[\e^{-\int_0^{\bar{\rho}\wedge\sigma_n}\lambda(Y_s^y)\,\ud s}v^{\eps,\delta}_x(t+\bar\rho\wedge\sigma_n,Y^y_{\bar\rho\wedge\sigma_n})+\int_0^{\bar{\rho}\wedge\sigma_n}\!\e^{-\int_0^{s}\lambda(Y_u^y)\,\ud u}h_x(t+s,Y^y_{s})\,\ud s\Big],
\end{align*}
for all $\eps\in(0,\eps_{n,U})$ and $\delta\in(0,\delta_{n,U})$.

First we let $U\uparrow (0,T)\times\cO$, so that $\bar \rho_{n,U}\uparrow \rho_n$, and then $n\to\infty$. We use again monotone convergence and dominated convergence as in the first part of this proof to obtain 
\begin{align*}
v_x(t,y)= \E\Big[\e^{-\int_0^{\sigma_*}\lambda(Y_s^y)\,\ud s}\alpha_0\mathds{1}_{\{\sigma_*<\tau_a(t,y)\}}+\int_0^{\tau_a(t,y)\wedge\sigma_*}\!\e^{-\int_0^{s}\lambda(Y_u^y)\,\ud u}h_x(t+s,Y^y_{s})\,\ud s\Big],
\end{align*}
where we also used that $\sigma_n\uparrow\sigma_*=\inf\{s\ge 0: v_x(t+s,Y^y_s)\ge \alpha_0\}\wedge(T-t)$, thanks to continuity of $s\mapsto v_x(t+s,Y^y_s)$. 

Thus the equality holds in \eqref{eq:limvx3a} and $\sigma_*$ is optimal.
\end{proof}
\begin{remark}
The optimal stopping time $\sigma_*$ can be equivalently expressed as
\[
\sigma_*=\inf\{s\ge 0:Y_s^{y}\geq b(t+s)\}\wedge (T-t)
\]
with $b(t)$ defined as in \eqref{eq:uppbndry}.
\end{remark}
Thanks to the probabilistic representation of $v_x$ we are able to prove the next proposition. In turn, the latter will yield monotonicity of $t\mapsto b(t)$.
\begin{proposition}\label{prop:s->v_xdecr}
The mapping $t\mapsto v_{x}(t,y)$ is non-increasing for all $y\in\cO$.
\end{proposition}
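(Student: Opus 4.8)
The plan is to read off the monotonicity directly from the probabilistic representation of $v_x$ in Theorem~\ref{thm:prbrprvx}. The decisive structural feature is that the auxiliary diffusion $Y^y$ solving \eqref{eq:SDEY} is time-homogeneous, so the \emph{same} path $(Y^y_s)_{s\ge0}$ enters the representation \eqref{eq:optstop} whatever the value of the time variable; all the $t$-dependence is confined to three places: the admissible class $\cT_t$ (through the horizon $T-t$), the absorption time $\tau_a(t,y)$ (through the boundary $s\mapsto a(t+s)$ and the horizon), and the running cost $h_x(t+s,Y^y_s)$. This makes a pathwise comparison possible.

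First I would record that $\tau_a(t',y)\le\tau_a(t,y)$, $\P$-a.s., whenever $t\le t'$. Indeed, by Proposition~\ref{prop:a-rc} the boundary $a$ is non-decreasing, so $a(t+s)\le a(t'+s)$ for all $s\ge0$; hence $\{s\in[0,T-t'):Y^y_s\le a(t+s)\}\subseteq\{s\in[0,T-t'):Y^y_s\le a(t'+s)\}$, and combining this inclusion with $T-t'\le T-t$ gives the claim. (Right-continuity of $a$, together with path-continuity of $Y^y$, guarantees the hitting times are attained, which is convenient but not needed.)

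Next, fix $t\le t'$ and $y\in\cO$, and for $\eps>0$ pick $\tau\in\cT_t$ that is $\eps$-optimal for $v_x(t,y)$ in \eqref{eq:optstop}. Since $T-t'\le T-t$, the truncation $\tau':=\tau\wedge(T-t')$ lies in $\cT_{t'}$, and because $\tau_a(t',y)\le T-t'$ one checks $\tau'\wedge\tau_a(t',y)=\tau\wedge\tau_a(t',y)$ and $\{\tau'<\tau_a(t',y)\}=\{\tau<\tau_a(t',y)\}$ (on which event $\tau'=\tau$). Writing $\Phi_t(\tau)$ for the random variable inside the expectation in \eqref{eq:optstop}, I would then compare $\Phi_{t'}(\tau')$ with $\Phi_t(\tau)$ pathwise: the difference $\Phi_t(\tau)-\Phi_{t'}(\tau')$ decomposes into three contributions, each non-negative. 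The indicator term $\alpha_0\,\e^{-\int_0^\tau\lambda(Y^y_u)\ud u}\big(\mathds 1_{\{\tau<\tau_a(t,y)\}}-\mathds 1_{\{\tau<\tau_a(t',y)\}}\big)$ is non-negative by $\tau_a(t',y)\le\tau_a(t,y)$ and positivity of the discount factor. The difference of running costs $\int_0^{\tau\wedge\tau_a(t',y)}\e^{-\int_0^s\lambda(Y^y_u)\ud u}\big(h_x(t+s,Y^y_s)-h_x(t'+s,Y^y_s)\big)\ud s$ is non-negative because $t\mapsto h_x(t,x)$ is non-increasing (Assumption~\ref{ass:1}(ii)). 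And the leftover integral $\int_{\tau\wedge\tau_a(t',y)}^{\tau\wedge\tau_a(t,y)}\e^{-\int_0^s\lambda(Y^y_u)\ud u}h_x(t+s,Y^y_s)\ud s$ is non-negative because $\tau_a(t',y)\le\tau_a(t,y)$ and $h_x\ge0$. Taking expectations gives $\E[\Phi_{t'}(\tau')]\le\E[\Phi_t(\tau)]$, hence $v_x(t',y)\le\E[\Phi_{t'}(\tau')]\le\E[\Phi_t(\tau)]\le v_x(t,y)+\eps$, and letting $\eps\downarrow0$ yields $v_x(t',y)\le v_x(t,y)$.

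The only mildly delicate point I anticipate is the bookkeeping in the pathwise comparison — in particular, tracking how the truncation $\tau'=\tau\wedge(T-t')$ interacts with $\tau_a(t',y)$ in the indicator and in the upper limits of the integrals — but once this is set up, each of the three terms collapses onto one of the three monotonicity inputs (monotone $a$, time-monotone $h_x$, non-negative $h_x$), so there is no genuine analytic obstacle.
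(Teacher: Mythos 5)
Your proof is correct and is essentially the same as the paper's: both proofs fix the earlier time, truncate an admissible stopping time for the earlier time down to the later horizon, exploit the pathwise ordering $\tau_a(t',y)\le\tau_a(t,y)$ coming from monotone $a$ and time-homogeneity of $Y$, and combine this with time-monotonicity and non-negativity of $h_x$ to compare the payoffs pathwise. The only cosmetic difference is that the paper takes an arbitrary $\tau\in\cT_s$ and infimizes at the end rather than starting from an $\eps$-optimal stopping time, and compresses your last two non-negative integrals into a single inequality.
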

\begin{proof}
Let $t,s\in[0,T]$ with $t>s$ and $y\in\cO$. Since $u\mapsto a(u)$ is non-decreasing and $(Y_u^{y})_{u\in[0,T]}$ is time-homogeneous, then $\tau_a(t,y)\leq \tau_a(s,y)$, $\P$-a.s. Thus
\begin{align}\label{eq:indic_ts}
\mathds{1}_{\{\tau<\tau_a(t,y)\}}\leq \mathds{1}_{\{\tau<\tau_a(s,y)\}},
\end{align}
for any stopping time $\tau\in\cT_0$.
Take arbitrary $\tau\in\cT_s$ and set $\tau_t=\tau\wedge(T-t)\in\cT_t$. Then we have
\begin{align*}
\begin{aligned}
v_x(t,y)&\le \E\Big[\e^{-\int_0^{\tau_t}\lambda(Y_u^y)\ud u}\alpha_{0}\mathds{1}_{\{\tau_t< \tau_a(t,y)\}}+\int_{0}^{\tau_t\wedge\tau_a(t,y)}\!\e^{-\int_0^u\lambda(Y_v^y)\ud v}h_x(t+u,Y_u^y)\,\ud u\Big].
\end{aligned}
\end{align*}
Now we make two observations. First we notice that 
\[
\int_{0}^{\tau_t\wedge\tau_a(t,y)}\!\e^{-\int_0^u\lambda(Y_v^y)\ud v}h_x(t+u,Y_u^y)\,\ud u\le \int_{0}^{\tau\wedge\tau_a(s,y)}\!\e^{-\int_0^u\lambda(Y_v^y)\ud v}h_x(s+u,Y_u^y)\,\ud u,
\]
because $0\le h_x(t\!+\!u,Y_u^y)\le h_x(s\!+\!u,Y_u^y)$ by Assumption \ref{ass:1}(ii) and $\tau_t\wedge\tau_a(t,y)\le \tau\wedge\tau_a(s,y)$. Second, we have equivalence of the events 
\[
\{\tau_t< \tau_a(t,y)\}=\{\tau< \tau_a(t,y)\},
\]
and on that event we also have $\tau_t=\tau$.
Therefore
\begin{align*}
& \e^{-\int_0^{\tau_t}\lambda(Y_u^y)\ud u}\alpha_{0}\mathds{1}_{\{\tau_t< \tau_a(t,y)\}}=\e^{-\int_0^{\tau}\lambda(Y_u^y)\ud u}\alpha_{0}\mathds{1}_{\{\tau< \tau_a(t,y)\}\cap\{\tau_t=\tau\}}\\
& \le\e^{-\int_0^{\tau}\lambda(Y_u^y)\ud u}\alpha_{0}\mathds{1}_{\{\tau< \tau_a(s,y)\}},
\end{align*}
where for the inequality we used \eqref{eq:indic_ts} and that $\alpha_0\ge 0$.

Combining the two observations we obtain
\begin{align*}
\begin{aligned}
v_x(t,y)\le
\E\Big[\e^{-\int_0^{\tau}\lambda(Y_u^y)\ud u}\alpha_{0}\mathds{1}_{\{\tau< \tau_a(s,y)\}}+\int_{0}^{\tau\wedge\tau_a(s,y)}\!\e^{-\int_0^u\lambda(Y_v^y)\ud v}h_x(s+u,Y_u^y)\,\ud u\Big].
\end{aligned}
\end{align*}
Since the inequality holds for any $\tau\in\cT_s$, then $v_x(t,y)\le v_x(s,y)$ as claimed.
\end{proof}

\begin{corollary}\label{cor:bproper}
Assume $\cM\neq\varnothing$. Then, the mapping $t\mapsto b(t)$ is non-decreasing and left-continuous on $(0,T)$ (as a map $(0,T)\to[\inf\cO,+\infty]$), with 
\begin{align*}
\lim_{t\to T}b(t)=+\infty.
\end{align*}
\end{corollary}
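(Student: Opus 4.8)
The plan is to read off all three assertions from the monotonicity of $t\mapsto v_x(t,y)$ established in Proposition \ref{prop:s->v_xdecr}, combined with the continuity of $v_x$ on $[0,T]\times\cO$ and the terminal identity $v_x(T,\cdot)\equiv 0$. Two preliminary facts will be used throughout: by Propositions \ref{prop:vincr}--\ref{prop:vconvex} the map $v_x(t,\cdot)$ is continuous and non-decreasing, so that for $y\in\cO$ one has $v_x(t,y)<\alpha_0$ if and only if $y<b(t)$ (the supremum defining $b(t)$ is not attained when $b(t)$ is finite and interior); and, by Proposition \ref{prop:s->v_xdecr}, $v_x(s,y)\ge v_x(t,y)$ whenever $s\le t$. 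For monotonicity of $b$, fix $0\le s\le t<T$: if $v_x(s,y)<\alpha_0$ then $v_x(t,y)\le v_x(s,y)<\alpha_0$, hence $\{y\in\cO:v_x(s,y)<\alpha_0\}\subseteq\{y\in\cO:v_x(t,y)<\alpha_0\}$, and taking suprema gives $b(s)\le b(t)$.

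For left-continuity, fix $t\in(0,T)$. By monotonicity the left limit $b(t-)=\sup_{s<t}b(s)$ exists in $[\inf\cO,+\infty]$ and $b(t-)\le b(t)$. Suppose, for contradiction, that $b(t-)<b(t)$, and pick $y\in\cO$ with $b(t-)<y<b(t)$. For every $s\in(0,t)$ we then have $y>b(s)$, hence $v_x(s,y)\ge\alpha_0$; letting $s\uparrow t$ and using continuity of $s\mapsto v_x(s,y)$ yields $v_x(t,y)\ge\alpha_0$, which contradicts $y<b(t)$. Therefore $b(t-)=b(t)$.

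For the behaviour as $t\uparrow T$, note first that $\lim_{t\uparrow T}b(t)$ exists in $[\inf\cO,+\infty]$ by monotonicity. Fix an arbitrary $N\in\cO$. Since $v_x(T,N)=0<\alpha_0$ and $v_x$ is continuous up to and including $\{T\}\times\cO$, there is $t_N\in(0,T)$ such that $v_x(t,N)<\alpha_0$ for all $t\in(t_N,T)$; consequently $b(t)\ge N$ for $t\in(t_N,T)$. As $N\in\cO$ was arbitrary, $\lim_{t\uparrow T}b(t)=+\infty$. The standing hypothesis $\cM\neq\varnothing$ is used only to make the statement non-vacuous: it ensures $b(t_0)<\infty$ for some $t_0\in[0,T)$, hence (by the monotonicity just proved) $b(t)<\infty$ for all $t\le t_0$, so that the action boundary is a genuine real-valued function on a nontrivial interval.

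I do not expect a serious obstacle here: the real work was already done in Proposition \ref{prop:s->v_xdecr}. The only points needing care are bookkeeping with the conventions $\sup\varnothing=\inf\cO$ and $\inf\varnothing=\sup\cO=+\infty$ when a slice $\{y:v_x(t,y)<\alpha_0\}$ is empty or all of $\cO$, and the left-continuity step, which genuinely uses the \emph{joint} continuity of $v_x$ (not merely separate continuity in each variable) in order to pass the inequality $v_x(s,y)\ge\alpha_0$ to the limit $s\uparrow t$.
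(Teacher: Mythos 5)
Your proof is correct and follows essentially the same route as the paper: monotonicity from Proposition \ref{prop:s->v_xdecr}, left-continuity from continuity of $v_x$, and the terminal blow-up from $v_x(T,\cdot)\equiv 0$. The only differences are cosmetic bookkeeping: for left-continuity the paper uses closedness of $\cM$ together with a sequence $(t_n,b(t_n))\to(t_0,b(t_0-))$, whereas you fix a $y$ in the hypothetical gap and pass to the limit in $s\mapsto v_x(s,y)$; for the limit as $t\to T$ the paper argues by contradiction from a finite $b(T-)$, whereas you argue directly that $b(t)>N$ eventually for every $N\in\cO$. One small inaccuracy in your closing remark: your left-continuity step actually uses only continuity of $v_x$ in the time variable at fixed $y$ (it is the paper's sequence-and-closedness argument that genuinely needs joint continuity), so your stated caveat is reversed, but this does not affect the validity of the argument.
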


\begin{proof}
The mapping $t\mapsto b(t)$ is non-decreasing because, recalling $\cI=\{v_x<\alpha_0\}$, Proposition \ref{prop:s->v_xdecr} guarantees 
$(s,x)\in\cI\implies (t,x)\in\cI$ for all $0\le s<t\le T$. 

The set $\cM=\{v_x=\alpha_{0}\}$ is closed by continuity of $v_x$. Thus, for any increasing sequence $(t_n)_{n\in\N}\subset[0,T]$ converging to $t_0\in[0,T)$ we have that
\begin{align*}
\lim_{n\to\infty}(t_n,b(t_n))=(t_0,b(t_0-))\in\cM \Longrightarrow b(t_0-)\geq b(t_0),
\end{align*}
where $b(t_0-)=\lim_{n\to\infty}b(t_n)$. Since $b(t_n)\leq b(t_0)$ for all $n\in\N$ by monotonicity of $b$, we conclude that $b(t_0-)=b(t_0)$. Hence $b$ is left-continuous.

For the final claim, assume by contradiction that $\lim_{t\to T}b(t)\eqqcolon b(T-)<+\infty$. Since $b$ is non-decreasing and $v_x$ is continuous, we have $v_x(t,b(T-))=\alpha_0
$ for for all $t<T$. However, by continuity of $v_x$ on $[0,T]\times\cO$ we have $\lim_{(t,x)\to(T,x_0)}v_x(t,x)=0$ for any $x_0\in\cO$. Thus we reached a contradiction. 
\end{proof}
\begin{remark}\label{rem:b}
Without more specific assumptions on $\cO$ it is possible that $b(t)=+\infty$ for $t\in(t_0,T]$, for some $t_0\in[0,T)$. We provide conditions under which this is ruled out in Proposition \ref{prop:bfin}.
\end{remark}

It is also of interest to investigate conditions under which $b(t)>\inf\cO$. Proposition \ref{prop:a-rc} guarantees $0<a(t)<b(t)$ when $\cO=(0,\infty)$. It remains to consider the case $\cO=\R$ so that $\cC\cap\cI\neq\varnothing$ both when $\cO=\R$ and when $\cO=(0,\infty)$. The condition $\cC\cap\cI\neq\varnothing$ will appear in the construction of the optimal control in the next section. Here we provide some mild sufficient conditions for $b(t)>\inf\cO$ which, for example, are satisfied in the benchmark case of Remark \ref{rem:benchm}.
\begin{proposition}\label{prop:b>0}
For $t_0\in[0,T]$ assume that for every $s\in(0,T-t_0]$
\begin{align}\label{eq:b>0}
\limsup_{y\downarrow\inf\cO}\E\Big[\e^{-\int_0^s\lambda(Y^y_u)\ud u}\Big(h_x(t_0,Y^y_{s})-\alpha_0\lambda(Y^y_s)\Big)\Big]< 0. 
\end{align}
Then $b(t)>\inf\cO$ for all $t\in(t_0,T]$.
\end{proposition}
\begin{proof}
Arguing by contradiction, let us assume there is $t_1>t_0$ such that $b(t_1)=\inf\cO$. Then, by monotonicity it must be $b(t)=\inf\cO$ for $t\in[0,t_1]$ and, moreover, $a(t)=\inf\cO$ for $t\in[0,t_1]$. In particular, taking $\sigma=t_1-t_0$ in \eqref{eq:optstop} and using that $\tau_a(t_0,y)>t_1-t_0$, $\P$-a.s., yields 
\begin{align*}
\begin{aligned}
v_x(t_0,y)&\le \E\Big[\e^{-\int_0^{t_1-t_0}\lambda(Y^y_s)\ud s}\alpha_0+\int_0^{t_1-t_0}\e^{-\int_0^s\lambda(Y^y_u)\ud u}h_x(t_0+s,Y^y_s)\ud s\Big]\\
&=\alpha_0+\E\Big[\int_0^{t_1-t_0}\e^{-\int_0^s\lambda(Y^y_u)\ud u}\big(h_x(t_0+s,Y^y_s)-\alpha_0\lambda(Y^y_s)\big)\ud s\Big]\\
&\le\alpha_0+\int_0^{t_1-t_0}\E\Big[\e^{-\int_0^s\lambda(Y^y_u)\ud u}\Big(h_x(t_0,Y^y_{s})-\alpha_0\lambda(Y^y_s)\Big)\Big]\ud s,
\end{aligned}
\end{align*}
where we recall that $t\mapsto h_x(t,x)$ is non-increasing. Since $\lambda$ is bounded from below and $h_x$ has at most quadratic growth, reverse Fatou's lemma yields $\limsup_{y\to\inf\cO}v_x(t_0,y)<\alpha_0$ from \eqref{eq:b>0}. Then $b(t_0)>\inf\cO$ and, by monotonicity, $b(t_1)>\inf\cO$. Hence we reach a contradiction.
\end{proof}

Monotonicity and left-continuity of the boundary $b$ that separates $\cI$ from $\cM$ are sufficient to construct an optimal control in the game with value \eqref{eq:game}. Combining that with optimality of $\tau_*$ in \eqref{eq:tau*} we therefore obtain a saddle point in the game and a characterisation of the players' optimal strategies. This formally completes the main task of the paper. It is however interesting to also obtain further regularity properties of the value function and of the optimal control boundary. Such analysis requires slightly stronger assumptions and it is postponed to Section \ref{sec:further}. 

\section{A saddle point}\label{sec:saddle}

The candidate optimal control $\nu$ is one that keeps the controlled process $X^\nu$ inside the region $\cI=\{v_x<\alpha_0\}$ and it will be constructed as a solution to the so-called Skorohod reflection problem at the boundary $b$. First we construct such candidate control and then we prove its optimality. Let us start by introducing the notion of solution of reflecting SDE required in our setup.

\begin{definition}[{\bf Solution of reflecting SDE}]\label{def:RSP}
Given a non-decreasing, left-continuous function $f:[0,T]\to\cO$ and an initial point $(t,x)\in[0,T)\times\cO$, we say that a pair $(X^{\nu^f},\nu^f)$ characterises a solution of a reflecting SDE at $f(\,\cdot\,)$ with initial condition $(t,x)$ if:
\begin{itemize}
\item[(i)] The pair $(X^{\nu^f},\nu^f)$ is $\F$-adapted;
\item[(ii)] $X^{\nu^f\!;x}_{0}=f(t)\wedge x$, $s\mapsto X^{\nu^f\!;x}_s\in\cO$ is continuous and its dynamics is given by \eqref{eq:SDE};
\item[(iii)] $\nu^f_0=-(x-f(t))^+$ and $s\mapsto\nu^f_s$ is continuous and non-increasing for $s\in[0,T-t]$;
\item[(iv)] The following conditions hold $\P$-a.s.:
\begin{align}\label{eq:refcond}
\begin{aligned}
&X^{\nu^f\!;x}_s\le f(t+s),\quad\text{for $s\in[0,T-t]$},\\
&\int_{(0,T-t]}\mathds{1}_{\big\{X^{\nu^f\!;x}_s<f(t+s)\big\}}\ud \nu^f_s=0.
\end{aligned}
\end{align} 
\end{itemize} 
\end{definition}
The next lemma states the well-posedness of the reflecting SDE at a generic boundary $f$ (we use the abbreviation {\bf RSDE}-$f$). Its proof is provided in Appendix for completeness because other proofs\footnote{Burdzy et al.\ \cite{burdzy2009skorokhod} prove the Skorokhod reflection problem for Brownian motion along right/left-continuous moving boundaries. Here we solve the problem for a reflecting SDE.} that we are aware of would normally require continuity of the function $f$. We notice that our proof works under the assumption that the coefficients $\mu$ and $\sigma$ in \eqref{eq:SDE} be only locally Lipschitz, so we do not really need the additional assumptions made in Assumption \ref{ass:1a}.
\begin{lemma}\label{lem:SRP}
Instead of Assumption \ref{ass:1a}, let $\mu$ and $\sigma$ be locally Lipschitz with linear growth. Given a non-decreasing, left-continuous function $f:[0,T]\to\cO$ and an initial point $(t,x)\in[0,T)\times\cO$, there is a unique pair $(X^{\nu^f},\nu^f)$ that characterises the solution of {\bf RSDE}-$f$ with initial condition $(t,x)$, up to indistinguishability. Moreover, the pair $(X^{\nu^f\!;x},\nu^f)$ can be expressed as
\begin{align}\label{eq:RSDE}
\begin{aligned}
&X^{\nu^f\!;x}_s=x+\int_0^s \mu(X^{\nu^f\!;x}_u)\ud u+\int_0^s\sigma(X^{\nu^f\!;x}_u)\ud W_u+\nu^f_s,\\
&\nu^f_s=-\sup_{0\le u\le s}\Big(x+\int_0^u \mu(X^{\nu^f\!;x}_v)\ud v+\int_0^u\sigma(X^{\nu^f\!;x}_v)\ud W_v-f(t+u)\Big)^+,
\end{aligned}
\end{align}
for $s\in[0,T-t]$, $\P$-a.s., and it holds
\begin{align}\label{eq:nuL2}
\E\Big[\sup_{0\le s\le T-t}\big|X^{\nu^f\!;x}_s\big|^2\Big]+\E\big[\big|\nu^f_{T-t}\big|^2\big]<\infty.
\end{align}
\end{lemma}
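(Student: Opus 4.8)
The plan is to reduce the reflecting SDE at the moving boundary $f$ to a fixed-point problem and then invoke the deterministic Skorokhod map. First I would treat existence and the explicit formula \eqref{eq:RSDE} together. Define the map $\Phi$ that sends a continuous adapted process $Z=(Z_s)_{s\in[0,T-t]}$ to the process
\[
(\Phi Z)_s \coloneqq x+\int_0^s\mu(Z_u)\,\ud u+\int_0^s\sigma(Z_u)\,\ud W_u+\nu_s(Z),\qquad \nu_s(Z)\coloneqq-\sup_{0\le u\le s}\Bigl(x+\int_0^u\mu(Z_v)\,\ud v+\int_0^u\sigma(Z_v)\,\ud W_v-f(t+u)\Bigr)^{+}.
\]
For a fixed continuous path, $s\mapsto x+\int_0^s\mu(Z_u)\,\ud u+\int_0^s\sigma(Z_u)\,\ud W_u$ is continuous, and the operation of subtracting the running supremum of its positive excursions above $f(t+\cdot)$ is exactly the one-sided Skorokhod reflection of a continuous trajectory below a (possibly discontinuous) non-decreasing ceiling; this produces a c\`adl\`ag (indeed continuous, since the driving path is continuous and $f$ only jumps upward, which never forces a downward jump of the reflected path) process satisfying (ii)--(iv). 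I would recall the elementary facts about this reflection map: it is well-defined path-by-path, it maps continuous inputs to continuous outputs when the barrier is non-decreasing and right-... here left-continuous (the key point is that upward jumps of $f$ cause no constraint violation), and it is $1$-Lipschitz in sup-norm with respect to the driving path. The map $\Phi$ is $\F$-adapted because $\nu_s(Z)$ depends only on $(Z_u, W_u)_{u\le s}$.

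Next I would localize to make the coefficients globally Lipschitz. Since $\mu,\sigma$ are only locally Lipschitz, set $\mu_N,\sigma_N$ to be globally Lipschitz truncations agreeing with $\mu,\sigma$ on $\{|y|\le N\}$, solve the truncated problem, and remove the truncation using that $f$ takes values in $\cO$ (so the reflected process never exceeds $f$, hence stays bounded above by $\sup_{s\le T}f(s)$ on each fixed path up to the explosion time, which combined with the usual a priori $L^2$ estimate rules out explosion). For the truncated (globally Lipschitz) coefficients, $\Phi$ is a contraction on $C([0,\sigma_0];\R)$ equipped with $\E[\sup_{s\le\cdot}|\cdot|^2]$ for a short enough horizon $\sigma_0$: the $1$-Lipschitz property of the Skorokhod map together with the standard Lipschitz/Burkholder--Davis--Gundy estimate for the drift-and-diffusion part gives $\E[\sup_{s\le\sigma_0}|(\Phi Z)_s-(\Phi Z')_s|^2]\le C\sigma_0\,\E[\sup_{s\le\sigma_0}|Z_s-Z'_s|^2]$. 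Iterating over consecutive short intervals (the constant $C$ does not degrade) yields a unique fixed point $X^{\nu^f;x}$ on all of $[0,T-t]$, and then $\nu^f\coloneqq\nu(X^{\nu^f;x})$ is the unique compensator; uniqueness up to indistinguishability follows because any two solutions are fixed points of $\Phi$. Finally, the bound \eqref{eq:nuL2} comes from the standard Gronwall a priori estimate: from \eqref{eq:RSDE}, $|\nu^f_s|\le |X^{\nu^f;x}_s|+|x|+\int_0^s|\mu(X^{\nu^f;x}_u)|\,\ud u+\sup_{s\le T-t}|\int_0^s\sigma\,\ud W|$, and $\E[\sup_{s\le T-t}|X^{\nu^f;x}_s|^2]<\infty$ is obtained by BDG, H\"older and Gronwall exactly as in Step 4 of the proof of Proposition \ref{prop:a-rc}, using linear growth of $\mu$ and linearity of $\sigma$.

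The main obstacle is the discontinuity of the barrier $f$: the classical references (e.g.\ \cite{burdzy2009skorokhod}) build the Skorokhod reflection for continuous moving boundaries, so I must verify carefully that left-continuity and monotonicity of $f$ suffice for the one-sided reflection map to be well-defined, to preserve continuity of the reflected trajectory, and — crucially — to retain the $1$-Lipschitz stability estimate that drives the contraction argument. Concretely, one checks that $X^{\nu^f;x}_s=\inf_{0\le u\le s}\bigl[(Y_s-Y_u)+f(t+u)\bigr]\wedge Y_s$ with $Y$ the unreflected driver and $Y_0=f(t)\wedge x$: upward jumps of $f$ enter only through $f(t+u)$ with $u<s$ and are "smoothed" by the infimum, so they never create a jump in $X^{\nu^f;x}$, while the complementarity condition $\int \mathbf 1_{\{X<f\}}\,\ud\nu^f=0$ holds because $\nu^f$ increases (in absolute value) only at times when the running infimum is attained, i.e.\ when $X^{\nu^f;x}_s=f(t+s)$ (using left-continuity of $f$ to handle the boundary times). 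Once this deterministic lemma is in hand, the stochastic part is routine. I would relegate the deterministic reflection facts to a short self-contained paragraph or cite them, then run the fixed-point argument as above.
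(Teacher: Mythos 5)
Your plan is essentially the same strategy the paper uses: Picard-type iteration built on the explicit one-sided Skorokhod formula, exploiting the $1$-Lipschitz dependence of the reflection map on the driving path, followed by a truncation step to pass from Lipschitz to locally Lipschitz coefficients, and a Gronwall estimate for \eqref{eq:nuL2}. Three implementation details differ and are worth flagging.

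First, you propose a Banach contraction on a short interval $[0,\sigma_0]$ and then glue intervals; the paper instead runs the classical global Picard scheme and proves the factorial-decay estimate $\E[\sup_{s\le\gamma}|X^{(n+1)}_s-X^{(n)}_s|^2]\le (C\gamma)^{n+1}/(n+1)!$, which gives a Cauchy sequence on the whole horizon in one shot. Both work; the paper's version avoids having to worry about re-initialising the reflected process at each splicing time, where the barrier may be mid-flat-stretch.

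Second, for uniqueness you invoke ``any two solutions are fixed points of $\Phi$'' — that only closes the argument if you first show that every solution in the sense of Definition~\ref{def:RSP} necessarily has $\nu^f$ given by the running-supremum formula, i.e.\ you must appeal to uniqueness of the deterministic Skorokhod decomposition below a left-continuous non-decreasing barrier. The paper sidesteps that by proving uniqueness directly with It\^o's formula applied to $(X^{\nu^1}-X^{\nu^2})^2$, using the complementarity condition \eqref{eq:refcond} to show the cross term $\int (X^{\nu^1}-X^{\nu^2})\,\ud(\nu^1-\nu^2)\le 0$ and then Gronwall. This is cleaner because it treats two candidate solutions symmetrically without needing to identify either with a formula a priori; it's a route you may want to adopt to avoid the extra lemma.

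Third, you identify the ``main obstacle'' correctly — that the barrier $f$ is only left-continuous and monotone, so the usual Skorokhod references for continuous moving boundaries do not apply directly — but you defer the verification that the reflected path is continuous and that the complementarity condition holds. This is precisely where the paper spends the most effort: it shows $\nu^{(n)}$ is right-continuous \emph{because $f$ is non-decreasing} (so an upward jump of $f$ at $s$ only relaxes the constraint and cannot force a downward jump of the supremum), and left-continuous because the pre-reflection driver is left-continuous; the complementarity is then checked pointwise using the strict inequality in \eqref{eq:Sk} together with monotonicity of $f$ to propagate constancy of $\nu^{(n)}$ forward in time. If you plan to cite these facts, be aware they are not in the standard references (e.g.\ Burdzy et al.\ handle right-/left-continuous barriers for Brownian motion, not reflecting SDEs), so you should expect to prove them along the lines above. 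Finally, your claim that boundedness-above plus the ``usual'' $L^2$ estimate rules out explosion is fine, but note that the boundedness-above is not really what saves you — it is linear growth of $\mu,\sigma$ together with the bound $|\nu^f_s|\le|f(0)|+\sup_{u\le s}|Y_u|$ that yields \eqref{eq:L2Xnu}, exactly as the paper does.
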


Since the boundary $b$ of the set $\cI=\{v_x<\alpha_0\}$ is left-continuous and non-decreasing, we have a simple corollary of the lemma above.
\begin{corollary}\label{cor:soluSkoroPrb}
For any $(t,x)\in[0,T]\times\cO$ such that $b(t)>\inf\cO$, there exists a unique pair $(X^{\nu^*},\nu^*)$ that characterises the solution of {\bf RSDE}-$b$ in the sense of Definition \ref{def:RSP}. Moreover, such pair can be written as in \eqref{eq:RSDE} with $b$ instead of $f$.
\end{corollary}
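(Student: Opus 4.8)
The statement is in essence Lemma~\ref{lem:SRP} applied with $f=b$, so my plan is to reduce it to that lemma; there are only two points to address, namely that $b$ is a priori defined on $[0,T)$ rather than $[0,T]$ and, more substantially, that $b$ may take the value $+\infty$ (by Remark~\ref{rem:b}, possibly on a whole subinterval). By Corollary~\ref{cor:bproper}, $b$ is non-decreasing and left-continuous on $(0,T)$ with $b(t+s)\uparrow+\infty$ as $s\uparrow T-t$; monotonicity together with $b(t)>\inf\cO$ gives $b(s)\ge b(t)>\inf\cO$ for every $s\in[t,T)$, and extending $b$ to $[0,T]$ by $b(T):=+\infty$ keeps it non-decreasing, left-continuous, and with values in $(\inf\cO,+\infty]$ on $[t,T]$ (only this restriction of $b$ enters, the barrier for the process started at $t$ being $s\mapsto b(t+s)$, $s\in[0,T-t]$). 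For $t=T$ the horizon $[0,T-t]$ is trivial, so assume $t<T$.

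The key observation is that the proof of Lemma~\ref{lem:SRP}, which realises $(X^{\nu^f},\nu^f)$ as the fixed point of the one-sided Skorokhod map $M\mapsto\bigl(M_s-\sup_{0\le u\le s}(M_u-f(t+u))^+\bigr)_s$ by Picard iteration under only local Lipschitzianity of $\mu,\sigma$ (this is exactly what \eqref{eq:RSDE} encodes), applies verbatim to barriers valued in $(\inf\cO,+\infty]$. Indeed, with the convention $(c-\infty)^+:=0$ the Skorokhod map is still well-defined and non-expansive in the supremum norm, so the contraction/Gronwall argument is unaffected; and assigning the value $+\infty$ to the barrier at a time merely removes the downward push at that time, which can only keep $X^{\nu^b}$ larger, hence still inside $\cO$ (for $\cO=(0,\infty)$ by the same mechanism as in the Lemma, exploiting $\mu(0)=\sigma(0)=0$). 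Writing $s_0:=\inf\{s\in[0,T-t]:b(t+s)=+\infty\}$, no reflection can occur on $\{s: b(t+s)=+\infty\}$: there $\nu^b$ is frozen and $X^{\nu^b}$ solves the uncontrolled, non-explosive SDE~\eqref{eq:SDE} with a frozen control increment. Thus the (so extended) Lemma~\ref{lem:SRP} yields a unique pair $(X^{\nu^*},\nu^*)$ satisfying \eqref{eq:RSDE} with $b$ in place of $f$ together with the bound \eqref{eq:nuL2}. If one prefers to remain within the literal hypotheses of Lemma~\ref{lem:SRP}, the same conclusion follows by truncating $b_n:=b\wedge n$ (for $n>\inf\cO$), applying the Lemma to each $b_n$, and letting $n\to\infty$: the pairs are consistent because $b_n$ and $b_m$ coincide on $[0,\zeta_n)$ with $\zeta_n:=\inf\{s:b(t+s)\ge n\}$ a \emph{deterministic} time increasing to $s_0$, and on $\{s\ge s_0\}$ the limiting dynamics is the frozen-control SDE just described.

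It then remains to check that $(X^{\nu^*},\nu^*)$ characterises the solution of {\bf RSDE}-$b$ in the sense of Definition~\ref{def:RSP}: properties (i)--(iii) and the constraint $X^{\nu^*}_s\le b(t+s)$ are read off directly from \eqref{eq:RSDE}, while the minimality identity $\int_{(0,T-t]}\mathds{1}_{\{X^{\nu^*}_s<b(t+s)\}}\,\ud\nu^*_s=0$ is part of the Skorokhod fixed-point characterisation on $[0,s_0)$ and is trivial on $\{s\ge s_0\}$ because $\ud\nu^*\equiv0$ there. Uniqueness is inherited from Lemma~\ref{lem:SRP}: restricted to $[0,\zeta_n)$ any solution of {\bf RSDE}-$b$ solves {\bf RSDE}-$b_n$, since the barriers coincide there, and is thus unique; on $\{s\ge s_0\}$ any solution has $\ud\nu\equiv0$ by \eqref{eq:refcond}, which pins it down. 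The one genuinely delicate point is the bookkeeping around the value $+\infty$ of $b$ — keeping the reflected process $\cO$-valued and making the existence/uniqueness of Lemma~\ref{lem:SRP} localise correctly across the levels $\zeta_n$; everything else is a routine passage to the limit based on \eqref{eq:RSDE} and \eqref{eq:nuL2}.
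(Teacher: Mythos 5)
Your proposal is correct and follows the paper's approach, which presents this as an immediate consequence of Lemma~\ref{lem:SRP} with no further argument. You rightly flag a point the paper glosses over — by Remark~\ref{rem:b} the boundary $b$ may equal $+\infty$ on an interval near $T$, whereas Lemma~\ref{lem:SRP} literally requires $f$ valued in $\cO$ — and your truncation/localisation via $b_n = b\wedge n$ with the deterministic times $\zeta_n$ (or, equivalently, the direct extension of the Lemma's proof to barriers valued in $(\inf\cO,+\infty]$ with the convention $(c-\infty)^+ := 0$) is a legitimate way to close that gap.
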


Using the corollary we can now produce a saddle point for the game.
\begin{theorem}
Fix $(t,x)\in[0,T]\times\cO$ such that $b(t)>\inf\cO$, recall $\nu^*$ defined in Corollary \ref{cor:soluSkoroPrb} and $\tau_*$ defined in \eqref{eq:tau*}. The control $\nu^*$ is admissible, i.e., $\nu^*\in\cA_{t,x}$, and optimal in the sense that
\begin{align}\label{eq:optnu}
v(t,x)=\sup_{\tau\in\cT_t}\cJ_{t,x}(\tau,\nu^*).
\end{align}
Therefore, the pair $(\tau_*,\nu^*)\in\cT_t\times\cA_{t,x}$ yields a saddle point, i.e., 
\begin{align}
\cJ_{t,x}(\tau,\nu^*)\leq \cJ_{t,x}(\tau_*,\nu^*)\leq \cJ_{t,x}(\tau_*,\nu)
\end{align}
for all $(\tau,\nu)\in\cT_t\times\cA_{t,x}$.
\end{theorem}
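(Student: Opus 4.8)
The plan is to verify the two inequalities separately, each by a standard verification-type argument adapted to the regularity at hand ($v\in W^{1,2;p}_{\ell oc}\cap C^{0,1;\gamma}_{\ell oc}$, classical inside $\cC\cap\cI$, smooth-fit at $a$, gradient equality at $b$). The right inequality $\cJ_{t,x}(\tau_*,\nu^*)\le\cJ_{t,x}(\tau_*,\nu)$ follows from \eqref{eq:opttau}: since $\tau_*$ is optimal for the stopper against \emph{any} control, $v(t,x)=\inf_{\nu}\cJ_{t,x}(\tau_*(\nu),\nu)$; in particular once we establish \eqref{eq:optnu} (i.e.\ $v(t,x)=\sup_\tau\cJ_{t,x}(\tau,\nu^*)$) we get $\cJ_{t,x}(\tau_*,\nu^*)=v(t,x)\le \cJ_{t,x}(\tau_*(\nu),\nu)$ for all $\nu$, and evaluating at $\nu$ gives the claim, \emph{provided} $\tau_*$ built along $X^{\nu^*}$ coincides with the $\tau_*$ in the saddle statement. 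So the real content is \eqref{eq:optnu}, which splits into (a) admissibility of $\nu^*$ and (b) the two-sided bound $\sup_\tau\cJ_{t,x}(\tau,\nu^*)=v(t,x)$.

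For admissibility (a): $\nu^*$ is c\`adl\`ag (in fact continuous for $s>0$, with a possible initial jump $-(x-b(t))^+$) and of bounded variation (non-increasing, hence $\nu^{*,+}=0$, $\nu^{*,-}=-\nu^*$); the $L^2$ bound $\E|\nu^*_{T-t}|^2<\infty$ is \eqref{eq:nuL2} from Lemma~\ref{lem:SRP}/Corollary~\ref{cor:soluSkoroPrb}. When $\cO=(0,\infty)$ one must check $X^{\nu^*}_s\ge 0$ for all $s$: since $\nu^*$ only pushes \emph{down} from $b$, and $b(t)>\inf\cO=0$, the reflected process stays in $[0,b(t+\cdot)]$ on $[\![0,\tau_\cO)\!)$; at $\tau_\cO$ (if the diffusion hits $0$) one freezes the control, $\nu^*_s-\nu^*_{\tau_\cO}=0$ for $s\ge\tau_\cO$, which is compatible with the Skorokhod construction because reflection at $b$ never forces the process up. This is where the ``admissible controls depend on $x$'' subtlety enters; I expect it to be routine but fiddly.

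For (b), the verification argument: take any $\tau\in\cT_t$ and apply It\^o's formula (in the $W^{1,2;p}_{\ell oc}$/Krylov sense, justified by mollification on compacts and the a priori bounds, stopping before exiting $U\uparrow[0,T)\times\cO$) to $s\mapsto e^{-rs}v(t+s,X^{\nu^*;x}_s)$ between $0$ and $\tau\wedge\tau_\cO$. On $\cC\cap\cI$ we have $(\partial_t+\cL-r)v=-h$ by \eqref{eq:PDE}; on $\cS=\{x\le a(\cdot)\}$ we have $v=g$ and $(\partial_t+\cL-r)v+h=\Theta\le 0$ by Proposition~\ref{prop:a-rc} (so the drift term contributes $\le -h$ there), but crucially the \emph{control does not act on $\cS$} because $\cS\subset\cI$ (we have $a(t)<b(t)$), so the only place $\ud\nu^*$ charges is $\cM=\{x\ge b(\cdot)\}$, where $v_x(t,b(t))=\alpha_0$ gives $v_x\,\ud\nu^*_s=\alpha_0\,\ud\nu^*_s=-\alpha_0\,\ud|\nu^*|_s$ (recall $\nu^*$ is non-increasing). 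For the possible initial jump, $v(t,x)-v(t,x\wedge b(t))=\int_{x\wedge b(t)}^x v_x(t,u)\,\ud u=\alpha_0(x-b(t))^+=-\alpha_0\Delta\nu^*_0$ using $v_x=\alpha_0$ on $\cM$. Collecting terms,
\begin{align*}
v(t,x)=\E_x\Big[e^{-r(\tau\wedge\tau_\cO)}v(t+\tau\wedge\tau_\cO,X^{\nu^*}_{\tau\wedge\tau_\cO})+\int_0^{\tau\wedge\tau_\cO}\!e^{-rs}h(t+s,X^{\nu^*}_s)\ud s+\int_{[0,\tau\wedge\tau_\cO]}\!e^{-rs}\alpha_0\ud|\nu^*|_s\Big]+R,
\end{align*}
with $R\ge 0$ the error coming from the regions where only the inequalities \eqref{eq:VIineq} hold. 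Since $v\ge g$ and $v(T,\cdot)=g(T)$ (and $v(t,0)=g(t)$ when $\cO=(0,\infty)$), bounding $v(t+\tau\wedge\tau_\cO,\cdot)\ge g(t+\tau\wedge\tau_\cO)$ yields $v(t,x)\ge\cJ_{t,x}(\tau,\nu^*)$ for every $\tau$, hence $v(t,x)\ge\sup_\tau\cJ_{t,x}(\tau,\nu^*)$. For the reverse inequality, choose $\tau=\tau_*=\inf\{s:X^{\nu^*}_s\le a(t+s)\}$: then on $[\![0,\tau_*]\!]$ the process stays in $\overline{\cC\cap\cI}$ (it never goes above $b$ by reflection and, by definition, does not drop below $a$ before $\tau_*$), so $R=0$ and $v(t+\tau_*\wedge\tau_\cO,X^{\nu^*}_{\tau_*\wedge\tau_\cO})=g(t+\tau_*\wedge\tau_\cO)$ by smooth-fit/continuity of $v$ at $a$ and the boundary behaviour at $0$ and $T$. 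This gives $v(t,x)=\cJ_{t,x}(\tau_*,\nu^*)$, proving \eqref{eq:optnu} \emph{and} identifying $\tau_*$ as the stopper's best response to $\nu^*$.

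The main obstacle I anticipate is twofold. First, making the It\^o/Dynkin expansion rigorous when $v$ is only $W^{1,2;p}_{\ell oc}$ and $X^{\nu^*}$ has a boundary-local-time-like component $\ud\nu^*$ supported on $\{X^{\nu^*}=b(t+\cdot)\}$: one needs a change-of-variables formula valid for Sobolev $v$ along a semimartingale with a singular finite-variation part — handled by mollifying $v$ on compact subsets of $\cC\cap\cI$, using that $\ud\nu^*$ charges only the set $\{x=b(t+s)\}$ where $v_x=\alpha_0$ exactly (so the mollified gradients converge against $\ud\nu^*$ by continuity of $v_x$), and exhausting $[0,T)\times\cO$ by such subsets while controlling the exit contributions via the quadratic-growth bound on $v$ and the $L^2$ bounds on $X^{\nu^*}$ and $\nu^*$. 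Second, the bookkeeping at $\tau_\cO$ when $\cO=(0,\infty)$: one must check that the terminal term $v(t+\tau\wedge\tau_\cO,X^{\nu^*}_{\tau\wedge\tau_\cO})$ on $\{\tau_\cO<\tau\}$ equals $v(t+\tau_\cO,0)=g(t+\tau_\cO)$, which matches the $g$-term in the payoff \eqref{eq:payoff}; this uses continuity of $v$ up to $[0,T]\times\{0\}$ recalled from \cite{bovo2023c}.
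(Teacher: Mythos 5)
Your approach (apply It\^o directly to the Sobolev function $v$ along $X^{\nu^*}$, mollifying and exhausting the domain) is genuinely different from the paper's, which never touches It\^o for the non-smooth $v$. The paper instead applies Dynkin's formula to the penalised solutions $v^{\eps,\delta}\in C^{1,2;\gamma}_{\ell oc}$ of \eqref{eq:PDE_pen} along $X^{\nu^*}$, drops the non-negative penalty $\tfrac1\delta(g-v^{\eps,\delta})^+$, and then passes $\eps,\delta\to 0$ using the uniform bound \eqref{eq:boundpsi} and the local $C^{0,1;\gamma}$ convergence \eqref{eq:prp_pen_prb}. The overall verification structure (split by regions, use $v_x=\alpha_0$ on $\{X^{\nu^*}=b\}$ to turn $v_x\,\ud\nu^*$ into $-\alpha_0\,\ud|\nu^*|$, handle the initial jump, use $v\ge g$ to bound the terminal term, and fall back on \eqref{eq:opttau} for the right-hand inequality of the saddle) matches the paper.

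However, there is a gap that your sketch does not close and which the paper handles by a dedicated argument. You write ``$R\ge 0$ the error coming from the regions where only the inequalities \eqref{eq:VIineq} hold'' and, separately, that the process on $[\![0,\tau_*]\!]$ ``stays in $\overline{\cC\cap\cI}$''. But \eqref{eq:VIineq} is an \emph{almost-everywhere} inequality in $(t,x)$, the reflected process lives precisely on the closure $\{x\le b(t)\}$, and a reflected diffusion can in principle accumulate occupation time on its reflecting curve (this is exactly what happens for local-time reflection at a fixed level). To turn the a.e.\ inequality $(\partial_t+\cL-r)v\le -h$ on $\{x<b(t)\}$ into a pathwise inequality for the time-integral along $X^{\nu^*}$, and to make the penalty term vanish in the limit, one needs to know that
\[
\cL eb\big(\{s\in[0,T-t]: X^{\nu^*\!;x}_s=b(t+s)\}\big)=0 \quad \P\text{-a.s.},
\]
which is \eqref{eq:condFS1} in the paper. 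This is proved there by an occupation-time/local-time argument for the c\`agl\`ad semimartingale $Z_s=X^{\nu^*\!;x}_s-b(t+s)$, using that $\langle Z\rangle$ is absolutely continuous with non-degenerate density $\sigma^2(X^{\nu^*})$. Without this fact, neither your mollification step for the drift integral nor your sign argument for $R$ is justified; the mollified $(\partial_t+\cL-r)v^n$ may not converge in the right sense against the occupation measure of $X^{\nu^*}$ if that measure charges $\{x=b(t+s)\}$. This is the single substantive ingredient missing from your proposal; the rest (admissibility of $\nu^*$ via Lemma~\ref{lem:SRP}, the treatment of $\tau_\cO$ when $\cO=(0,\infty)$, and deducing the saddle point from \eqref{eq:opttau} once \eqref{eq:optnu} is established) is consistent with the paper.
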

\begin{remark}
Notice that on the right-hand side of \eqref{eq:optnu} the control $\nu^*$ is stopped at $\tau$. Therefore we could equivalently use the notation $\nu^*|_\tau\coloneqq(\nu^*_{t\wedge\tau})_{t\ge 0}$ to keep track more explicitly of the stopped paths.
\end{remark}

\begin{remark}
It is worth noticing that the structure of $\nu^*$ implies that the process $X^{\nu^*}$ does not jump upwards. Therefore, at equilibrium, the expression for $\tau_*$ simplifies to 
\[
\tau_*(\nu^*)=\inf\{s\ge 0:X^{\nu^*}_s\le a(t+s)\}\wedge(T-t).
\]
\end{remark}

\begin{proof}
Combining \eqref{eq:opttau} and \eqref{eq:optnu} it is easy to deduce $v(t,x)= \cJ_{t,x}(\tau^*(\nu^*),\nu^*)$ and the inequalities $\cJ_{t,x}(\tau,\nu^*) \le v(t,x)\le \cJ_{t,x}(\tau_*(\nu),\nu)$. Thus, if we prove \eqref{eq:optnu} the saddle point follows. 

The process $\nu^*$ is an admissible control because $X^{\nu^*}_s\in\cO$ for all $s\in[0,T-t]$ by Definition \ref{def:RSP}(ii) and Corollary \ref{cor:soluSkoroPrb}. 
Now recall the solution $v^{\eps,\delta}$ of the penalised problem \eqref{eq:PDE_pen} along with the bound \eqref{eq:boundpsi}. For simplicity of exposition and with no loss of generality here we take $\cO=\R$; when $\cO=(0,\infty)$ the argument is the same because $X^{\nu^*}$ does not reach zero in finite time. Fix $n\in\N$ and denote $\cK_n\coloneqq [0,T-\frac1n]\times[-n,n]$. Fix $(t,x)\in\cK_n$ and set $\rho^{*}_n=\inf\{s\ge 0:(t+s,X^{\nu^*\!;x}_s)\notin\cK_{2n}\}$. Assuming with no loss of generality that $b(t)> -2n$ we have $\rho^*_n>0$, $\P$-a.s. Since $s\mapsto\nu^*_s$ is continuous for $s\in(0,T-t]$, then $X^{\nu^*\!;x}_{\rho^*_n}=X^{\nu^*\!;x}_{\rho^*_n-}$, $\P$-a.s. We know that $v^{\eps,\delta}\in C^{1,2;\gamma}(\cK_{2n})$ and therefore an application of Dynkin's formula yields, for any $\tau\in\cT_t$
\begin{align*}
\begin{aligned}
v^{\eps,\delta}(t,x)&=\E\Big[\e^{-r(\tau\wedge\rho_n^*)}v^{\eps,\delta}(t\!+\!\tau\wedge\rho_n^*,X^{\nu^*\!;x}_{\tau\wedge\rho_n^*})\Big]\\
&\quad+\E\Big[\int_0^{\tau\wedge\rho_n^*}\e^{-rs}\big(h\!+\!\tfrac1\delta(g\!-\!v^{\eps,\delta})^+\!-\!\psi_\eps(|v^{\eps,\delta}_x|^2\!-\!\alpha^2_0)\big)(t\!+\!s,X^{\nu^*\!;x}_s)\ud s\Big]\\
&\quad-\E\Big[\int_{(0,\tau\wedge\rho_n^*]}\e^{-rs} v_x^{\eps,\delta}(t\!+\!s,X^{\nu^*\!;x}_s)\ud \nu^*_s+v^{\eps,\delta}(t,x\wedge b(t))-v^{\eps,\delta}(t,x)\Big],
\end{aligned}
\end{align*}
where we used that $\nu^*$ has at most a single jump at time zero. We can drop the term $\frac1\delta(g\!-\!v^{\eps,\delta})^+$ on the right-hand side and continue with an inequality. Since we are on a compact $v^{\eps,\delta}$ and $v^{\eps,\delta}_x$ are bounded. Moreover $\E[|\nu^*_{T-t}|^2]<\infty$ and \eqref{eq:boundpsi} holds. Then, letting $\eps,\delta\to 0$, we can pass the limits inside expectation by dominated convergence. 

Recalling the convergence of $v^{\eps,\delta}$ to $v$ in $C^{0,1;\gamma}(\cK_{2n})$ (see \eqref{eq:prp_pen_prb}) we obtain
\begin{align*}
\begin{aligned}
v(t,x)&\ge\E\Big[\e^{-r(\tau\wedge\rho_n^*)}v(t\!+\!\tau\wedge\rho_n^*,X^{\nu^*\!;x}_{\tau\wedge\rho_n^*})\Big]\\
&\quad+\E\Big[\int_0^{\tau\wedge\rho_n^*}\e^{-rs}\big(h\!-\!\limsup_{\eps,\delta\to 0}\psi_\eps(|v^{\eps,\delta}_x|^2\!-\!\alpha^2_0)\big)(t\!+\!s,X^{\nu^*\!;x}_s)\ud s\Big]\\
&\quad-\E\Big[\int_{(0,\tau\wedge\rho_n^*]}\e^{-rs} v_x(t\!+\!s,X^{\nu^*\!;x}_s)\ud \nu^*_s+v(t,x\wedge b(t))-v(t,x)\Big].
\end{aligned}
\end{align*}
From (iii) and (iv) in Definition \ref{def:RSP} we know that $v(t,x\wedge b(t))-v(t,x)=-\alpha_0(x-b(t))^+=\alpha_0\nu^*_0$ and 
\[
v_x(t\!+\!s,X^{\nu^*\!;x}_s)\ud \nu^*_s=1_{\{X^{\nu^*\!;x}_s=b(t+s)\}}v_x(t\!+\!s,X^{\nu^*\!;x}_s)\ud \nu^*_s=\alpha_0\ud \nu^*_s\quad\text{for }s\in(0,T-t].
\]
Since the obstacle condition $v\ge g$ also holds, then we have
\begin{align}\label{eq:optnu0}
\begin{aligned}
v(t,x)&\ge\E\Big[\e^{-r(\tau\wedge\rho_n^*)}g(t\!+\!\tau\wedge\rho_n^*)\Big]\\
&\quad+\E\Big[\int_0^{\tau\wedge\rho_n^*}\e^{-rs}\big(h\!-\!\limsup_{\eps,\delta\to 0}\psi_\eps(|v^{\eps,\delta}_x|^2\!-\!\alpha^2_0)\big)(t\!+\!s,X^{\nu^*\!;x}_s)\ud s-\int_{[0,\tau\wedge\rho_n^*]}\e^{-rs}\alpha_0\ud \nu^*_s\Big].
\end{aligned}
\end{align}

For the term containing the limsup we need the following fact, which we prove at the end:
\begin{align}\label{eq:condFS1}
\P\Big((t+s,X^{\nu^*\!;x}_s)\in \cI,\,\text{ for a.e. } s\in[0,T-t]\Big)=1,
\end{align}
where we recall that $\cI=\{v_x<\alpha_{0}\}$. Fix $\omega\in\Omega$ outside of a null set. Condition \eqref{eq:condFS1} tells us that for a.e.\ $s\in[0,\tau(\omega)\wedge\rho^*_n(\omega)]$ there is $\eta=\eta_{\omega,s}\in(0,\alpha_0/2)$ such that $v_x(t+s,X^{\nu^*\!;x}_s(\omega))\le \alpha_0-\eta$. Then, the uniform convergence of $v_x^{\eps,\delta}$ to $v_x$ implies that for sufficiently small $\eps$ and $\delta$ we have $v^{\eps,\delta}_x(t+s,X^{\nu^*\!;x}_s(\omega))\le \alpha_0-\eta/2$. Hence
\[
\limsup_{\eps,\delta\to 0}\psi_\eps(|v^{\eps,\delta}_x|^2\!-\!\alpha^2_0)\big)(t\!+\!s,X^{\nu^*\!;x}_s(\omega))=0.
\]
Since this procedure holds almost surely for a.e.\ $s\in[0,\tau\wedge\rho^*_n]$, we conclude that \eqref{eq:optnu0} reads
\begin{align*}
\begin{aligned}
v(t,x)&\ge\E\Big[\e^{-r(\tau\wedge\rho_n^*)}g(t\!+\!\tau\wedge\rho_n^*)\!+\!\int_0^{\tau\wedge\rho_n^*}\e^{-rs}h(t\!+\!s,X^{\nu^*\!;x}_s)\ud s-\int_{[0,\tau\wedge\rho_n^*]}\e^{-rs}\alpha_0\ud \nu^*_s\Big].
\end{aligned}
\end{align*}
Letting $n\to\infty$ we have $\rho^*_n\uparrow T-t$. Recalling that $\ud\nu^*_s=-\ud |\nu^*|_s$ because $\nu^*$ is decreasing, we get
\begin{align*}
\begin{aligned}
v(t,x)&\ge\E\Big[\e^{-r\tau}g(t\!+\!\tau)\!+\!\int_0^{\tau}\e^{-rs}h(t\!+\!s,X^{\nu^*\!;x}_s)\ud s+\int_{[0,\tau]}\e^{-rs}\alpha_0\ud |\nu^*|_s\Big],
\end{aligned}
\end{align*}
by Fatou's lemma. Since $\tau\in\cT_t$ was arbitrary and $v(t,x)\le \sup_{\tau}\cJ_{t,x}(\tau,\nu^*)$ we obtain \eqref{eq:optnu}. Hence $(\tau_*,\nu^*)$ is a saddle point.

It remains to prove \eqref{eq:condFS1}. 
For every $s\in[0,T-t]$ we have $(t+s,X_s^{\nu^*\!;x})\in\overline\cI$, $\P$-a.s.,\ because $(X_s^{\nu^*\!;x},\nu^*_s)$ is solution of {\bf RSDE}-$b$ (see Corollary \ref{cor:soluSkoroPrb}). Next we prove that $\{s\in[0,T-t]:X^{\nu^*\!;x}_s(\omega)=b(t+s)\}$ has got zero Lebesgue measure for a.e.\ $\omega\in\Omega$. 
Denote $b_s^t\coloneqq b(t+s)$ and set $Z_s\coloneqq X_s^{\nu^*\!;x}-b^t_s$ for $s\in(0,T-t]$. Taking $Z_0=x\wedge b(t)-b(t)$, the process $Z$ is a c\`agl\`ad semi-martingale on $[0,T-t]$ with the following Ito's representation:
\begin{align}\label{eq:Z}
Z_s=x\wedge b(t)+\int_0^s \mu(X^{\nu^*\!;x}_u)\ud u+\int_0^s\sigma(X^{\nu^*\!;x}_u)\ud W_u+\nu_s^*-b_s^t,\quad s\in[0,T-t]. 
\end{align}
Notice that since $(b^t_s)_{s\in[0,T-t]}$ is monotone $\langle Z\rangle^c_s=\int_0^s\sigma^2(X^{\nu^*\!;x}_u)\ud u$ is the continuous part of the quadratic variation (cf.\ \cite[Sec.\ VII.44]{dellacherie1982probabilities}). 

Set $\bar Z_s\coloneqq Z_{s+}$ so that $\bar Z$ is a c\`adl\`ag semimartingale, with continuous part of the quadratic variation $\langle \bar Z\rangle^c_u=\langle Z\rangle^c_u$, $u\in[0,T-t]$. For $y\in\cO$ we denote by $(\bar L^y_s)_{s\in[0,T-t]}$ the semi-martingale local time\footnote{Here we follow the convention in \cite[Ch.\ IV]{protter2005stochastic}, where $\sign(x)=-1$ for $x\le 0$ and $\sign(x)=1$ for $x> 0$. Notice that, among other things, this choice leads to $2L^0(W)=L^0(|W|)$ for the local time at zero of Brownian motion and its absolute value.} at $y$ of the process $\bar Z$. Then, \cite[Cor.\ 1 of Thm.\ IV.70]{protter2005stochastic} gives us the occupation time formula: for any bounded Borel-measurable function $g:\cO\to \R$,
\begin{align*}
\int_0^s\!g(Z_u)\,\ud \langle Z\rangle^c_u=\int_0^s\!g(\bar Z_{u-})\,\ud \langle \bar Z\rangle^c_u=\int_0^s\!g(\bar Z_{u})\,\ud \langle \bar Z\rangle^c_u=\int_{-\infty}^{\infty}\! g(y)\bar L^y_s\ud y,\quad\text{$\P$-a.s.},
\end{align*}
where the second equality holds by continuity of $t\mapsto\langle\bar Z\rangle^c_t$. Using this formula we obtain
\begin{align}\label{eq:occtime}
\begin{aligned}
\int_0^s\!\mathds{1}_{\{Z_u=0\}}\,\ud \langle Z\rangle^c_u&\,=\lim_{n\to\infty}\int_0^s\!\mathds{1}_{\{Z_u\in[-\frac{1}{n},\frac{1}{n}]\}}\,\ud \langle Z\rangle^c_u
=\lim_{n\to\infty}\int_{-\frac{1}{n}}^{\frac{1}{n}}\! \bar L^y_s\ud y=0,\quad\text{$\P$-a.s.},
\end{aligned}
\end{align}
where for the final equality we use that $\bar L^y_s<\infty$, $\P$-a.s., for every $y\in\R$. Since $\langle Z\rangle^c_s=\int_0^s\sigma^2(X^{\nu^*\!;x}_u)\ud u$ and $\sigma(x)>0$ for $x\in\cO$, we conclude from \eqref{eq:occtime} that 
\[
\cL eb\big(\{s\in[0,T-t]:Z_s=0\}\big)=\cL eb\big(\{s\in[0,T-t]:X^{\nu^*\!;x}_s=b(t+s)\}\big)=0, \quad\text{$\P$-a.s.},
\]
as claimed.
\end{proof}

\section{Continuity of the optimal boundaries}\label{sec:further}

In this section we show continuity of the optimal boundaries $t\mapsto a(t)$ and $t\mapsto b(t)$. We recall that $t\mapsto a(t)$ is non-decreasing and right-continuous with $a(t)\le \underline{\Theta}(t)$ for all $t\in[0,T)$ (cf.\ Proposition \ref{prop:a-rc}). Instead, $t\mapsto b(t)$ is non-decreasing and left-continuous, but possibly $b(t)=+\infty$ for $t\in(t_0,T]$ and some $t_0$ (Corollary \ref{cor:bproper} and Remark \ref{rem:b}). We also recall that $\cS\cap\cM=\varnothing$, so that $a(t)<b(t)$ for all $t\in[0,T)$ such that $b(t)>\inf\cO$. Let us start by showing continuity of $a$. 

\begin{proposition}\label{prop:cont-a}
Given $t_0\in(0,T)$, if $h_x(t_0,a(t_0))>0$ then $t\mapsto a(t)$ is continuous at $t_0$.
\end{proposition}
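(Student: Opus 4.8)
The plan is to rule out a jump of the boundary $a$ at $t_0$ by combining the right-continuity already established in Proposition \ref{prop:a-rc} with the smooth-fit condition $v_x(t,a(t))=0$ and the strict positivity of $h_x$ at $(t_0,a(t_0))$. Since $a$ is non-decreasing and right-continuous, the only possible discontinuity at $t_0$ is a left-jump: $a(t_0-) < a(t_0)$, where $a(t_0-)=\lim_{t\uparrow t_0}a(t)$ exists by monotonicity. So I would argue by contradiction, assuming $a(t_0-)<a(t_0)$.

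First I would exploit the geometry this jump creates. For every $t<t_0$, the open rectangle $(t_0-\delta,t_0)\times(a(t_0-),a(t_0))$ (for $\delta$ small enough) lies in the interior of the continuation set $\cC$ (using that $a(t)\le a(t_0-)$ for $t<t_0$), while the point $(t_0,x)$ with $x\in(a(t_0-),a(t_0))$ lies in $\cS$, so $v(t_0,x)=g(t_0)$. Moreover, on this rectangle $v$ solves the PDE $\partial_t v+\cL v-rv=-h$ in the classical sense (cf.\ \eqref{eq:PDE}), hence $v_x$ solves $\partial_t v_x+\cG v_x=-h_x$ there (Theorem \ref{thm:v_xVI}). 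The key fact I want to extract is a contradiction between two pieces of information at $(t_0,x)$ for $x$ slightly above $a(t_0-)$: on one hand $v_x(t_0,x)=0$ because $(t_0,x)\in\cS$ and $x\mapsto v(t_0,\cdot)$ is constant below $a(t_0)$ (so $v_x(t_0,\cdot)\equiv 0$ on $(\inf\cO,a(t_0))$ and by continuity also at $a(t_0-)$); on the other hand, approaching $(t_0,x)$ from inside $\cC$ the value of $v_x$ should be strictly positive because the source term $-h_x$ is strictly negative there, forcing $v_x$ to grow as time decreases.

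To make the second point rigorous I would use the probabilistic representation of $v_x$ from Theorem \ref{thm:prbrprvx}. Fix $x\in(a(t_0-),a(t_0))$, say $x=a(t_0)-\eps$ for small $\eps$, and pick a point $(s,x)$ with $s<t_0$ close to $t_0$. By continuity of $h_x$ and $a$ from the left, $h_x$ stays bounded below by a positive constant $c_0>0$ on a small parabolic neighbourhood of $(t_0,a(t_0))$, and for $(s,y)$ in that neighbourhood we have $y<a(s')$ only for $s'$ close to $T$, so the absorption time $\tau_a(s,y)$ stays bounded below by a positive deterministic quantity $\eta>0$ with high probability when the starting point is close enough to the jump. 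Then from \eqref{eq:optstop}, dropping the (non-negative) first term and keeping only the integral of $h_x\ge c_0$ up to a small deterministic time $\eta'<\eta$, one gets $v_x(s,y)\ge c_1>0$ uniformly for $(s,y)$ in a neighbourhood of $(t_0,a(t_0))$ with $s<t_0$ (where $c_1$ depends on $c_0,\eta'$ and the discount bound). Taking $(s,y)\to(t_0,a(t_0-)^+)$ along $s<t_0$, continuity of $v_x$ gives $v_x(t_0,a(t_0-))\ge c_1>0$. But $v_x(t_0,a(t_0-))=0$ since $a(t_0-)<a(t_0)$ puts $(t_0,a(t_0-))$ strictly inside the stopping set where $v_x\equiv 0$. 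This contradiction forces $a(t_0-)=a(t_0)$, i.e.\ continuity at $t_0$.

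The main obstacle I anticipate is the bookkeeping around the absorption time $\tau_a$: one must be careful that when the starting point $(s,y)$ is in the jump region, the process $Y$ started at $y$ does not immediately hit the boundary $a(s+\cdot)$, because although $y>a(s)$, the boundary is only slightly below $y$ and $a$ might rise. The clean way around this is to note that since $a(t_0-)<a(t_0)$ and $a$ is right-continuous and non-decreasing, for $s$ in a left-neighbourhood of $t_0$ and $y$ in $(a(t_0-),a(t_0))$ we have $a(s+u)<a(t_0)$ for all $u<t_0-s$, hence $\tau_a(s,y)\ge (t_0-s)\wedge(\text{time for }Y^y\text{ to exit }(a(t_0-),\infty))$, and the latter exit time is bounded below in probability uniformly in $y$ bounded away from $a(t_0-)$ by standard small-ball estimates for the diffusion $Y$. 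Choosing $y=a(t_0)-\eps$ fixed while $s\uparrow t_0$ makes $t_0-s\to 0$, so instead I would keep $s$ fixed slightly below $t_0$, let $\eps\downarrow 0$ so that $y\downarrow a(t_0-)$ is not what we want either — rather, the robust choice is to fix a single interior point $(s_0,x_0)\in\cC$ with $s_0<t_0$, $x_0\in(a(t_0-),a(t_0))$, bound $v_x(s_0,x_0)>0$ via the representation with the genuinely positive absorption time $\tau_a(s_0,x_0)>0$, and then use the PDE $\partial_t v_x+\cG v_x=-h_x<0$ together with the parabolic strong maximum principle on the rectangle to propagate strict positivity of $v_x$ up to the top edge $\{t_0\}\times(a(t_0-),a(t_0))$, contradicting $v_x(t_0,\cdot)\equiv 0$ there. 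I would present the argument in this latter form, as it isolates the only real analytic input — the strong maximum principle — and keeps the probabilistic part to the single clean estimate $v_x(s_0,x_0)>0$.
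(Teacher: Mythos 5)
Your plan is built around a contradiction that does not actually arise. You want to show $v_x>0$ somewhere inside the would‑be jump rectangle $(t_0-\delta,t_0)\times(a(t_0-),a(t_0))$ and then propagate this strict positivity to the top edge $\{t_0\}\times(a(t_0-),a(t_0))$, where $v_x\equiv 0$. But the equation $\partial_t v_x+\cG v_x=-h_x$ is \emph{backward} parabolic in $t$: after the time reversal $s=t_0-t$ it becomes $\partial_s w-\cG w=h_x\ge 0$ with $w(0,\cdot)=v_x(t_0,\cdot)=0$. For this forward problem, data $w(0,\cdot)=0$ at $s=0$ together with a nonnegative source is perfectly compatible with $w(s,\cdot)>0$ for $s>0$ (i.e.\ $v_x>0$ for $t<t_0$); the strong minimum principle propagates the minimum \emph{backward} in $s$, i.e.\ to $t>t_0$, where the PDE no longer holds. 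There is no parabolic Hopf/propagation argument that forces $v_x>0$ at $t_0$ from $v_x>0$ at $s_0<t_0$. In fact, since $v_x$ is continuous across the stopping boundary (smooth fit), $v_x(s,y)\to v_x(t_0,y^*)=0$ as $(s,y)\to(t_0,y^*)$ with $y^*\in(a(t_0-),a(t_0))$, and your probabilistic lower bound degenerates for the same reason: the absorption time $\tau_a(s,y)$ in \eqref{eq:optstop} tends to $0$ in probability as $s\uparrow t_0$, because the moving boundary $a(s+\cdot)$ jumps up past $y$ at time $t_0-s\downarrow 0$. So neither version of your argument (probabilistic estimate or maximum principle) yields a contradiction.

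The paper's proof works with $v$ itself rather than $v_x$, and the essential extra input it uses (which your plan does not exploit) is the monotonicity $t\mapsto v(t,x)-g(t)$ non-increasing and the fact that on the jump segment $v(t_0,\cdot)=g(t_0)$ is \emph{constant in $x$}. From the PDE in the rectangle one gets $(\cL v)(s,x)+\Theta(s,x)\ge 0$; integrating against a non-negative $\varphi\in C^\infty_c(a(t_0-),a(t_0))$ and passing $s\uparrow t_0$, the term $\int(\cL^*\varphi)\,g(t_0)\,\ud x$ vanishes (derivative of a compactly supported function), leaving $\int\varphi\,\Theta(t_0,\cdot)\ge 0$, while $\Theta(t_0,\cdot)\le 0$ on $(\inf\cO,a(t_0)]$ from Step~3 of Proposition~\ref{prop:a-rc}. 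Hence $\Theta(t_0,\cdot)\equiv 0$ on $(a(t_0-),a(t_0))$, which is impossible because $\Theta_x(t_0,\cdot)=h_x(t_0,\cdot)>0$ near $a(t_0)$. It is this identity in $\Theta$, not any discontinuity in $v_x$, that carries the contradiction; indeed smooth fit precisely prevents $v_x$ from detecting the jump.
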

\begin{proof}
Since we know that $a$ is right-continuous, we now need to prove that it is left-continuous. We follow ideas as in \cite[Sec.\ 3]{de2015note}. By contradiction assume there exists $t_0\in(0,T)$ such that $h_x(t_0,a(t_0))>0$ but $a(t_0-)<a(t_0)$. Recall that $\cS\cap\cM=\varnothing$. Then, from \eqref{eq:PDE} and monotonicity of $t\mapsto a(t)$ we know that for every $x\in\cQ_0\coloneqq(a(t_0-),a(t_0))$ and any $s\in[t_0-\eps,t_0)$, with $\eps>0$ sufficiently small, we have
\begin{align}\label{eq:cont}
\begin{aligned}
0&=v_t(s,x)+(\cL v)(s,x)-rv(s,x)+h(s,x)\\
&\le \dot g(s)+(\cL v)(s,x)-rg(s)+h(s,x)=(\cL v)(s,x)+\Theta(s,x).
\end{aligned}
\end{align}
where the inequality holds because $v(s,x)\ge g(s)$ and $v_t(s,x)\le \dot g(s)$ (cf.\ Step 1 in the proof of Proposition \ref{prop:a-rc}).

Let $\varphi\in C^\infty_c(\cQ_0)$ be positive with $\int\varphi(x)\ud x=1$. Multiplying \eqref{eq:cont} by $\varphi$ and integrating we get
\begin{align*}
0&\le \int_{a(t_0-)}^{a(t_0)}\varphi(x)(\cL v)(s,x)\ud x+\int_{a(t_0-)}^{a(t_0)}\varphi(x)\Theta(s,x)\ud x\\
&=\int_{a(t_0-)}^{a(t_0)}(\cL^*\varphi)(x)v(s,x)\ud x+\int_{a(t_0-)}^{a(t_0)}\varphi(x)\Theta(s,x)\ud x,
\end{align*}
where $\cL^*$ is the adjoint operator of $\cL$.
Taking limits as $s\uparrow t_0$ yields
\begin{align*}
0\le &\int_{a(t_0-)}^{a(t_0)}(\cL^*\varphi)(x)v(t_0,x)\ud x+\int_{a(t_0-)}^{a(t_0)}\varphi(x)\Theta(t_0,x)\ud x\\
=&\int_{a(t_0-)}^{a(t_0)}(\cL^*\varphi)(x)g(t_0)\ud x+\int_{a(t_0-)}^{a(t_0)}\varphi(x)\Theta(t_0,x)\ud x
=\int_{a(t_0-)}^{a(t_0)}\varphi(x)\Theta(t_0,x)\ud x\le 0,
\end{align*}
where the final inequality holds because $\Theta(t_0,x)\le 0$ for $x\le a(t_0)$ (cf.\ Step 3 in the proof of Proposition \ref{prop:a-rc}). By arbitrariness of $\varphi$ it must then be $\Theta(t_0,x)=0$ for all $x\in(a(t_0-),a(t_0))$. This contradicts the assumption that $\Theta_x(t_0,a(t_0))=h_x(t_0,a(t_0))>0$, because by continuity of $h_x$ there must be $\delta>0$ for which $h_x(t,x)>0$ for $x\in(a(t_0)-\delta,a(t_0)]$. Hence, it must be $a(t_0-)=a(t_0)$.
\end{proof}
The next corollaries follow immediately.
\begin{corollary}
If $h_x(T,\underline{\Theta}(T))>0$ then $a(T-)=\underline{\Theta}(T)$.
\end{corollary}
\begin{proof}
Recall that $a(t)\le \underline\Theta(t)$ for all $t\in[0,T)$. Let us assume by contradiction that $a(T-)<\underline{\Theta}(T)$. Then \eqref{eq:cont} holds for every $x\in(a(T-),\underline\Theta(T))$ and all $s\in[T-\eps,T)$ for a sufficiently small $\eps>0$. Therefore, repeating the rest of the proof of Proposition \ref{prop:cont-a} with $\underline\Theta(T)$ instead of $a(t_0)$ and $a(T-)$ instead of $a(t_0-)$ leads to 
\[
\int_{a(T-)}^{\underline\Theta(T)}\varphi(x)\Theta(T,x)\ud x= 0.
\]
That is impossible because $\Theta_x(T,x)=h_x(T,x)>0$ for $x\in (\underline\Theta(T)-\delta,\underline\Theta(T)]$ and suitable $\delta>0$.
\end{proof}

\begin{corollary}
If $h_x(t,x)>0$ for $x\le \underline\Theta(t)$ and all $t\in[0,T]$, then $t\mapsto a(t)$ is continuous on $[0,T]$ with $a(T-)=\underline{\Theta}(T)$.
\end{corollary}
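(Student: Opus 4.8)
The plan is to obtain continuity on the half-open interval $[0,T)$ almost for free from the results already established, and then to run a localisation argument at the terminal time in order to pin down the left limit $a(T-)$.

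For the first part, recall from Proposition \ref{prop:a-rc} that $a$ is right-continuous on $[0,T)$ and that $a(t)\le\underline\Theta(t)$ for every $t\in[0,T)$. Hence the standing hypothesis ``$h_x(t,x)>0$ for $x\le\underline\Theta(t)$'' gives in particular $h_x(t_0,a(t_0))>0$ for every $t_0\in(0,T)$, so Proposition \ref{prop:cont-a} yields continuity of $a$ at each interior time; together with right-continuity at $t=0$ this makes $a$ continuous on $[0,T)$. Since $a$ is monotone, the left limit $a(T-):=\lim_{t\uparrow T}a(t)$ exists, and from $a(t)\le\underline\Theta(t)\le\underline\Theta(T)$ (recall $\underline\Theta$ is non-decreasing) we immediately get $a(T-)\le\underline\Theta(T)$. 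What remains, and is the core of the matter, is the reverse inequality $a(T-)\ge\underline\Theta(T)$.

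I would prove this by contradiction, imitating the scheme used for Proposition \ref{prop:cont-a} but with the terminal condition $v(T,\cdot)\equiv g(T)$ playing the role of ``$(t_0,\cdot)\in\cS$ on the wedge'' there. Assuming $a(T-)<\underline\Theta(T)$, pick a bounded open interval $\cQ\subset\cO$ with $a(T-)<\inf\cQ\le\sup\cQ\le\underline\Theta(T)$ (a short interval just above $a(T-)$; when $\underline\Theta(T)=+\infty$ the upper constraint is vacuous). For every $s<T$ and $x\in\cQ$ we have $a(s)\le a(T-)<x$, hence $(s,x)\in\cC$; and since $b(t)\to+\infty$ as $t\uparrow T$ by Corollary \ref{cor:bproper}, there is $s_0<T$ with $b(s)>\sup\cQ$ on $[s_0,T)$, so that $[s_0,T)\times\cQ\subset\cC\cap\cI$. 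On this set $v$ is a classical solution of the equation in \eqref{eq:PDE}, and using $v(s,x)\ge g(s)$ together with $v_t(s,x)\le\dot g(s)$ (Step 1 of the proof of Proposition \ref{prop:a-rc}) one gets $(\cL v)(s,x)+\Theta(s,x)\ge 0$ there. Testing against a non-negative $\varphi\in C^\infty_c(\cQ)$ and integrating by parts (boundary terms vanish by compact support) gives $\int_\cQ(\cL^*\varphi)\,v(s,\cdot)\,\ud x+\int_\cQ\varphi\,\Theta(s,\cdot)\,\ud x\ge 0$; letting $s\uparrow T$, using $v(s,\cdot)\to g(T)$ uniformly on $\overline\cQ$, $\int_\cQ\cL^*\varphi\,\ud x=0$, and continuity of $\Theta$, this becomes $\int_\cQ\varphi\,\Theta(T,\cdot)\,\ud x\ge 0$. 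But $\Theta(T,x)\le 0$ for $x\le\underline\Theta(T)$, hence on $\cQ$, so $\int_\cQ\varphi\,\Theta(T,\cdot)\,\ud x=0$ and, by arbitrariness of $\varphi$, $\Theta(T,\cdot)\equiv 0$ on $\cQ$; therefore $h_x(T,\cdot)=\Theta_x(T,\cdot)\equiv 0$ on $\cQ\subset\{x\in\cO:x\le\underline\Theta(T)\}$, contradicting the hypothesis. Thus $a(T-)=\underline\Theta(T)$, and setting $a(T):=\underline\Theta(T)$ makes $a$ continuous on $[0,T]$.

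The step I expect to be the most delicate is precisely this passage to the terminal time: unlike in Proposition \ref{prop:cont-a} there is no value $a(T)$ produced by \eqref{eq:uppbndry} and no pre-existing wedge $(a(t_0-),a(t_0))$, so the wedge $(a(T-),\underline\Theta(T))$ has to be manufactured, and one must verify it lies in $\cC\cap\cI$ for times near $T$ — which is exactly where $b(t)\to+\infty$ enters, ensuring that near $T$ the continuation set sits inside the inaction set so that the clean second-order equation, rather than the full variational inequality with gradient constraint, is available for the test-function computation.
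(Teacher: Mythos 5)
Your proof is correct and follows essentially the same route as the paper's: continuity on $[0,T)$ from Proposition \ref{prop:cont-a}, then a contradiction argument at $T$ via the same test-function computation, with $\underline\Theta(T)$ and $a(T-)$ taking over the roles of $a(t_0)$ and $a(t_0-)$. Your treatment is in fact somewhat more explicit than the paper's one-line ``repeat the argument'' remark — in particular you spell out why the wedge sits inside $\cC\cap\cI$ near $T$ by invoking $b(t)\to+\infty$ from Corollary \ref{cor:bproper}, and you handle the case $\underline\Theta(T)=+\infty$ by taking $\cQ$ bounded — but the underlying argument is the same.
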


Next we show conditions under which $b(t)<\infty$ for $t\in[0,T)$. The next proposition requires a slightly abstract sufficient condition, but after the proof we will provide easy sufficient conditions that imply the more abstract one.
\begin{proposition}\label{prop:bfin}
Assume that for all $t\in[0,T)$
\begin{align}\label{eq:suffbO}
\liminf_{y\to\infty}\E\Big[\int_0^{\tau_a(t,y)}\e^{-\int_0^s\lambda(Y^y_u)\ud u}h_x(t+s,Y^y_s)\ud s\Big]> \alpha_0.
\end{align}
Then $b(t)<\infty$ for all $t\in[0,T)$.
\end{proposition}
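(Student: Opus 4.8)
The plan is to argue by contradiction using the probabilistic representation of $v_x$ from Theorem~\ref{thm:prbrprvx}. Suppose there is $t_1\in[0,T)$ with $b(t_1)=+\infty$; by \eqref{eq:uppbndry} together with the monotonicity and continuity of $y\mapsto v_x(t_1,y)$ this means $v_x(t_1,y)<\alpha_0$ for every $y\in\cO$. Since $t\mapsto v_x(t,y)$ is non-increasing (Proposition~\ref{prop:s->v_xdecr}), it follows that $v_x(t,y)<\alpha_0$ for all $(t,y)\in[t_1,T)\times\cO$, i.e.\ $b(t)=+\infty$ for $t\in[t_1,T)$. Consequently the optimal stopping time $\sigma_*=\sigma_*(t_1,y)$ of \eqref{eq:tau*vx} equals $T-t_1$ for every $y\in\cO$, because the set $\{s\in[0,T-t_1]:v_x(t_1+s,Y^y_s)\ge\alpha_0\}$ is empty (recall also $v_x(T,\cdot)=0$).

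Next I would substitute $\sigma_*=T-t_1$ into the optimality statement \eqref{eq:optstop}. Since $\tau_a(t_1,y)\le T-t_1=\sigma_*$ by \eqref{eq:taua}, the indicator $\mathds{1}_{\{\sigma_*<\tau_a(t_1,y)\}}$ vanishes and $\sigma_*\wedge\tau_a(t_1,y)=\tau_a(t_1,y)$, so the representation collapses to
\begin{align*}
v_x(t_1,y)=\E\Big[\int_0^{\tau_a(t_1,y)}\e^{-\int_0^s\lambda(Y^y_u)\ud u}h_x(t_1+s,Y^y_s)\,\ud s\Big].
\end{align*}
Now I would let $y\to\infty$. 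The integrand is non-negative because $h_x\ge0$ (Assumption~\ref{ass:1}(ii)), so Fatou's lemma applies: choosing a sequence $y_n\to\infty$ realising $\liminf_{y\to\infty}\E[\,\cdot\,]$ and invoking \eqref{eq:suffbO},
\begin{align*}
\liminf_{y\to\infty}v_x(t_1,y)\ge\E\Big[\liminf_{y\to\infty}\int_0^{\tau_a(t_1,y)}\e^{-\int_0^s\lambda(Y^y_u)\ud u}h_x(t_1+s,Y^y_s)\,\ud s\Big]\ge\alpha_0,
\end{align*}
and the last inequality is in fact strict, since the inequality in \eqref{eq:suffbO} is strict on an event of positive $\P$-measure. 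As $v_x\le\alpha_0$ everywhere, this is a contradiction, whence $b(t)<\infty$ for every $t\in[0,T)$.

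The substantive point is conceptual rather than computational: one must notice that $b(t_1)=+\infty$ propagates, via time-monotonicity of $v_x$ (Proposition~\ref{prop:s->v_xdecr}), to the entire strip $[t_1,T)\times\cO$, and that this is exactly what forces $\sigma_*=T-t_1$, removing the infimum over $\tau$ in \eqref{eq:optstop} and upgrading the trivial bound $v_x\le\alpha_0$ into the inequality $\E\big[\int_0^{\tau_a(t_1,y)}\cdots\big]\le\alpha_0$ that the hypothesis \eqref{eq:suffbO} is tailored to contradict. The only genuinely technical ingredient is the interchange of $\liminf_{y\to\infty}$ and $\E$, which is dispatched by the elementary Fatou argument above once one notes continuity of $y\mapsto Y^y$ (and monotonicity of $a$), so that one may equivalently work along a countable sequence. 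No extra care is needed for $\cO=(0,\infty)$ or for $t_1=0$, since $\tau_a(t_1,y)$ and the auxiliary diffusion $Y$ are well defined regardless of whether $a(t_1)=\inf\cO$.
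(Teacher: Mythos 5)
Your proof is correct and follows essentially the same route as the paper: contradiction from $b(t_1)=+\infty$, propagation via time-monotonicity of $v_x$ to conclude $\sigma_*=T-t_1$, collapse of the representation \eqref{eq:optstop} to the pure integral term, and Fatou's lemma combined with the strictness assumption to produce $\alpha_0>\alpha_0$. The only differences are presentational (you spell out why the indicator vanishes and why $\sigma_*=T-t_1$, and you note the reduction to a sequence for Fatou), which the paper leaves implicit.
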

\begin{proof}
Let us assume by contradiction that there exists $t_0\in[0,T)$ such that $b(t_0)=\infty$. Then it must be $v_x(t,y)<\alpha_0$ for all $(t,y)\in[t_0,T]\times\cO$ by monotonicity of $v_x(\,\cdot\,,y)$. In particular, for any $(t,y)\in[t_0,T)\times\cO$ we have $\sigma_*(t,y)=T-t$, $\P$-a.s.\ by \eqref{eq:tau*vx}. Then \eqref{eq:optstop} yields
\begin{align*}
\alpha_0>v_x(t,y)=\E\Big[\int_{0}^{\tau_a(t,y)}\!\e^{-\int_0^s\lambda(Y_u^y)\ud u}h_x(t+s,Y_s^y)\,\ud s\Big].
\end{align*}
Letting $y\uparrow \infty$ and using \eqref{eq:suffbO} we reach a contradiction.
\end{proof}
\begin{remark}
An easy sufficient condition for \eqref{eq:suffbO} is that
$\lim_{y\to\infty}h_{x}(s,y)=\infty$ for all $s\in[0,T]$, because by Fatou's lemma 
\begin{align*}
\liminf_{y\to\infty}\E\Big[\int_0^{\tau_a(t,y)}\!\!\!\e^{-\int_0^s\lambda(Y^y_u)\ud u}h_x(t\!+\!s,Y^y_s)\ud s\Big]
\ge \E\Big[\liminf_{y\to\infty}\int_0^{\tau_a(t,y)}\!\!\!\e^{-\int_0^s\lambda(Y^y_u)\ud u}h_x(t\!+\!s,Y^y_s)\ud s\Big]=\infty,
\end{align*}
upon noticing that $\tau_a(t,y)\uparrow T-t$ and $\lambda(y)$ is bounded from above (cf.\ Assumption \ref{ass:1a}(i)). This is satisfied in our benchmark example from Remark \ref{rem:benchm}.
\end{remark}

For the continuity of $b$, which we prove in the next proposition, we require some additional regularity of $\mu$ and $h$. With a slight abuse of notation it is convenient to interpret $\mu$ as a function from $[0,T)\times\cO$ to $\R$.
\begin{proposition}
Let $t_0\in(0,T)$ and let $U_0$ be an open neighbourhood of $(t_0,b(t_0))$. If for some $\gamma\in(0,1)$ we have $\mu\in C^{2;\gamma}(U_0)$, $h\in C^{0,2;\gamma}(U_0)$, and either $h_{xx}(t_0,b(t_0))>0$ or $\mu_{xx}(b(t_0))>0$ (or both), then $t\mapsto b(t)$ is continuous at $t_0$.
\end{proposition}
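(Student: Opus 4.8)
The strategy mirrors the proof of Proposition \ref{prop:cont-a} for the stopping boundary, but now applied to the derivative $v_x$ rather than to $v$ itself. Since $b$ is already known to be non-decreasing and left-continuous (Corollary \ref{cor:bproper}), the only thing to rule out is a downward jump from the left of $b$ being strictly below $b(t_0)$, i.e.\ a jump of the form $b(t_0) < b(t_0+)$; equivalently, I must show $b(t_0+)=b(t_0)$. Suppose for contradiction that $b(t_0)<b(t_0+)$. By monotonicity, every point of the open rectangle $\cQ_0 \times (t_0,t_0+\eps)$ with $\cQ_0 \coloneqq (b(t_0),b(t_0+))$ lies in the action set $\cM=\{v_x=\alpha_0\}$ for $\eps>0$ small enough, so $v_x(s,x)=\alpha_0$ there, and in particular $v_{xx}(s,x)=0$ and $v_{xt}(s,x)=0$ in the sense of distributions (and classically, where the extra regularity applies). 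Also $a(t_0)<b(t_0)$, so the rectangle sits strictly inside the stopping-boundary-free region and on it the classical PDE $\partial_t v + \cL v - rv = -h$ from \eqref{eq:PDE} holds (because $\cM\subset\cS^c$).

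The key step is to differentiate this PDE in $x$ once, obtaining \eqref{eq:vxPDE}, namely $\partial_t v_x + \cG v_x + h_x = 0$ on $\cM \cap (\text{this rectangle})$, which makes sense classically because on $U_0$ we are assuming $\mu\in C^{2;\gamma}$ and $h\in C^{0,2;\gamma}$, hence $v$ and $v_x$ gain an extra spatial derivative there by interior Schauder estimates. Now on the rectangle $v_x\equiv \alpha_0$ is constant, so $\partial_t v_x=0$, $\partial_x v_x=0$, $\partial_{xx}v_x=0$, and \eqref{eq:vxPDE} collapses to
\[
-\bigl(r-\mu_x(x)\bigr)\alpha_0 + h_x(t,x) = 0, \quad (t,x)\in\cQ_0\times(t_0,t_0+\eps).
\]
Letting $t\downarrow t_0$ and invoking continuity, this forces $h_x(t_0,x) = (r-\mu_x(x))\alpha_0$ identically for $x\in\cQ_0$, and hence (differentiating this last identity in $x$, which is legitimate since both sides are now $C^1$ in $x$ on $U_0$ under the stated regularity) $h_{xx}(t_0,x) = \mu_{xx}(x)\alpha_0$ for all $x\in\cQ_0$. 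Taking $x\downarrow b(t_0)$ and using continuity of $h_{xx}$ and $\mu_{xx}$ on $U_0$ yields $h_{xx}(t_0,b(t_0)) = \alpha_0\,\mu_{xx}(b(t_0))$.

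But this last equality can be turned into a contradiction with the hypotheses, provided I am a little more careful: rather than reading off a single identity at one point, I should extract a one-sided inequality that survives the limit. Concretely, to the right of $b$ one is in $\cM$ so $v_{xx}\ge 0$ by convexity (Proposition \ref{prop:vconvex}), while to the left one is in $\cI$ where $v_{xx}$ may be strictly positive; evaluating the variational inequality \eqref{eq:VIineq} from the two sides of the rectangle — and using that $v_x$ attains its maximum value $\alpha_0$ along the rectangle so $v_{xt}\le 0$ on the rectangle, in the spirit of Step 1 of Proposition \ref{prop:a-rc} applied to $v_x$ via Proposition \ref{prop:s->v_xdecr} — should give $-(r-\mu_x(x))\alpha_0 + h_x(t_0,x)\ge 0$ on $\cQ_0$ but $\le 0$ on $\cQ_0$, hence an equality as above, and then the derivative test at the endpoint gives $h_{xx}(t_0,b(t_0))\le \alpha_0\mu_{xx}(b(t_0))$ from one side and $\ge$ from the other, but the strictness of either $h_{xx}(t_0,b(t_0))>0$ (combined with $\mu_{xx}\ge 0$, which holds since $\mu$ is convex by Assumption \ref{ass:1a}(ii)) or $\mu_{xx}(b(t_0))>0$ (combined with $h_{xx}\ge 0$, which follows from $h_x$ non-decreasing in $x$, Assumption \ref{ass:1}(ii)) breaks it. I expect the main obstacle to be the bookkeeping of \emph{which} inequalities hold on \emph{which} side of the rectangle and ensuring that the weak/distributional derivatives of $v_x$ can legitimately be replaced by classical ones on $U_0$ — this is exactly where the added regularity $\mu\in C^{2;\gamma}$, $h\in C^{0,2;\gamma}$ is used, via a second application of the interior Schauder theory (\cite[Thm.\ 3.10]{friedman2008partial}) to the equation \eqref{eq:vxPDE} now read as a uniformly parabolic equation for $v_x$ with $C^{\gamma}$ coefficients and $C^{\gamma}$ right-hand side $-h_x$ on $\cC\cap\cI\cap U_0$, and a limiting argument across the free boundary $b$ as in the continuity proof for $a$.
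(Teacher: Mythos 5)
There is a genuine error in the geometry that undermines the whole argument. You assume for contradiction that $b(t_0)<b(t_0+)$ and then claim that the rectangle $\cQ_0\times(t_0,t_0+\eps)$ with $\cQ_0=(b(t_0),b(t_0+))$ lies in $\cM=\{v_x=\alpha_0\}$. This is backwards: since $b$ is non-decreasing, for any $t>t_0$ we have $b(t)\ge b(t_0+)$, so any $x<b(t_0+)$ satisfies $x<b(t)$, i.e.\ $(t,x)\in\cI$, not $\cM$. Only the bottom edge $\{t_0\}\times\cQ_0$ sits in $\cM$. Consequently $v_x$ is \emph{not} identically $\alpha_0$ on the interior of your rectangle, the derivatives $v_{xt},v_{xx}$ do not vanish there, and the reduction of the PDE to the algebraic identity $h_x(t,x)=(r-\mu_x(x))\alpha_0$ never happens. (There is a secondary problem too: even if the rectangle were inside $\cM$, the equation $\partial_t v_x+\cG v_x=-h_x$ is established in Theorem~\ref{thm:v_xVI} only on $\cC\cap\cI$; inside $\cC\cap\cM$ the distributional inequality \eqref{eq:vxPDEd} gives only a one-sided bound $(r-\mu_x(x))\alpha_0\le h_x(t,x)$, not an identity, so your ``differentiate the identity in $x$'' step would fail regardless.)

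The paper's actual proof does work on the rectangle $(b_1,b_2)\times(t_0,t_0+\eps)$, but precisely \emph{because} it lies in $\cC\cap\cI$, where $v_x$ classically solves $\partial_t v_x+\cG v_x=-h_x$. Under the extra regularity of $\mu$ and $h$ in $U_0$ one can differentiate this once more to get a parabolic equation for $v_{xx}$ with right-hand side $-h_{xx}-\mu_{xx}v_x$. Testing against a spatial $\varphi\in C_c^\infty((b_1,b_2))$ and integrating in time over $(t_0,t_0+\eps)$, one uses the boundary value $v_x(t_0,\cdot)=\alpha_0$ on $(b_1,b_2)$ (this is where membership of $\{t_0\}\times(b_1,b_2)$ in $\cM$ enters, together with $v_{xx}\ge 0$ from Proposition~\ref{prop:vconvex}), divides by $\eps$ and lets $\eps\downarrow 0$. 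The result is $\beta_0\le \alpha_0\int (\cH^*\varphi)_x=0$, a contradiction with $\beta_0>0$, where $\beta_0$ comes from the strict positivity of $h_{xx}(t_0,b(t_0))$ or $\mu_{xx}(b(t_0))$. Your proposal does correctly anticipate the role of the extra regularity and the interior Schauder estimates, and it is right that the argument parallels Proposition~\ref{prop:cont-a}; but because the rectangle is in $\cI$ rather than in $\cM$, the correct structure is an integration-by-parts argument against the $v_{xx}$ equation, not the pointwise collapse of the PDE to an algebraic constraint.
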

\begin{proof}
We know from Corollary \ref{cor:bproper} that $b$ is non-decreasing and left-continuous. Therefore, it only remains to prove that it is also right-continuous. Arguing by contradiction we assume that $\lim_{t\downarrow t_0}b(t)=b(t_0+)>b(t_0)$ for some $t_0\in(0,T)$. 
There exist $\eps>0$ and two points $b_1,b_2$ such that $b(t_0)<b_1<b_2<b(t_0+)$ and $[b_1,b_2]\times [t_0,t_0+\varepsilon] \subset U_0$. It follows from Theorem \ref{thm:v_xVI} that $v_x$ is a classical solution of
\begin{align*}
\big(\partial_t v_x +\cG v_x\big)(t,x)=-h_x(t,x),\quad(t,x)\in U_0.
\end{align*}
Thanks to interior regularity of solutions to parabolic PDEs we can differentiate the equation above with respect to $x$. Noticing that $\sigma_{xx}\equiv 0$, we obtain
\begin{align}\label{eq:PDEvxxproof}
\big(\partial_tv_{xx} +\cH v_{xx}\big)(t,x)=-h_{xx}(t,x)-\mu_{xx}(x)v_x(t,x),\quad(t,x)\in U_0,
\end{align}
where $\cH$ is defined on functions $\varphi\in C^2(\R)$ by 
\begin{align*}
(\cH\varphi)(x)\coloneqq \frac{\sigma^2(x)}{2}\varphi_{xx}(x)+\Big[2\sigma(x)\sigma_x(x)+\mu(x)\Big]\varphi_x(x)-\Big[r-2\mu_x(x)-\sigma_x^2(x)\Big]\varphi(x).
\end{align*}

Let $\varphi\in C^{\infty}_{c}((b_1,b_2))$ be non-negative and $\int\varphi(x)\ud x=1$. Multiplying \eqref{eq:PDEvxxproof} by $\varphi$ and integrating over $(t_0,t_0+\eps)\times(b_1,b_2)$ we obtain
\begin{align*}
\begin{aligned}
&\int_{t_0}^{t_0+\varepsilon}\!\!\int_{b_1}^{b_2}\!\Big[\big(\partial_tv_{xx}\! +\!\cH v_{xx}\big)(t,x)\Big]\varphi(x)\,\ud x\,\ud t=\!-\!\int_{t_0}^{t_0+\varepsilon}\!\!\int_{b_1}^{b_2}\! \big[h_{xx}(t,x)\!+\!\mu_{xx}(x)v_x(t,x)\big]\varphi(x)\,\ud x\,\ud t.
\end{aligned}
\end{align*}
If $h_{xx}(t_0,b(t_0))>0$, then by continuity on $U_0$ we can assume with no loss of generality that $h_{xx}(t,x)\ge \beta_0>0$ for $(t,x)\in[t_0,t_0+\eps]\times[b_1,b_2]$. If instead $\mu_{xx}(b(t_0))>0$, recalling that $v_x(t_0,b(t_0))=\alpha_0$ we can assume with no loss of generality (by continuity on $U_0$) that $\mu_{xx}(x)v_x(t,x)\ge \beta_0>0$ for $(t,x)\in[t_0,t_0+\eps]\times[b_1,b_2]$. 
Then, recalling that also $\varphi\ge 0$ integrates to one, we obtain
\begin{align}\label{eq:intbypartH}
\begin{aligned}
&\int_{t_0}^{t_0+\varepsilon}\!\!\int_{b_1}^{b_2}\!\Big[\big(\partial_tv_{xx}\! +\!\cH v_{xx}\big)(t,x)\Big]\varphi(x)\,\ud x\,\ud t\le \!-\beta_0\eps.
\end{aligned}
\end{align}
For the first term in the integral on the left-hand side of \eqref{eq:intbypartH}, we have
\begin{align}\label{eq:BPjump2}
\begin{aligned}
&\int_{t_0}^{t_0+\varepsilon}\!\int_{b_1}^{b_2}\!\partial_tv_{xx}(t,x)\varphi(x)\,\ud x\,\ud t=-\int_{t_0}^{t_0+\varepsilon}\!\int_{b_1}^{b_2}\!\partial_tv_x(t,x)\varphi_{x}(x)\,\ud x\,\ud t\\
&=-\int_{b_1}^{b_2}\!(v_x(t_0+\varepsilon,x)-\alpha_0)\varphi_{x}(x)\,\ud x=\int_{b_1}^{b_2}\!v_{xx}(t_0+\varepsilon,x)\varphi(x)\,\ud x\geq 0,
\end{aligned}
\end{align}
where, first we integrated by parts, then used Fubini's theorem and $v_x(t_0,x)=\alpha_0$ for $x\in (b_1,b_2)$ and, finally, integrated by parts again. The final inequality follows by convexity of $v(t_0+\eps,\cdot)$ (cf.\ Proposition \ref{prop:vconvex}).

Plugging \eqref{eq:BPjump2} into the left-hand side of \eqref{eq:intbypartH}, we have
\begin{align*}
\begin{aligned}
-\beta_0\varepsilon\ge \int_{t_0}^{t_0+\varepsilon}\!\!\int_{b_1}^{b_2}\!\big(\cH v_{xx}\big)(t,x)\varphi(x)\,\ud x\,\ud t=-\int_{t_0}^{t_0+\varepsilon}\!\int_{b_1}^{b_2}\!v_{x}(t,x)\frac{\partial}{\partial x}\Big[\big(\cH^*\varphi\big)(x)\Big]\,\ud x\,\ud t,
\end{aligned}
\end{align*}
where $\cH^*$ is the adjoint of $\cH$. Dividing both sides by $\varepsilon$ and letting $\varepsilon\downarrow0$, we have
\begin{align*}
\beta_0\le \int_{b_1}^{b_2}\!v_{x}(t_0,x)\frac{\partial}{\partial x}\Big[\big(\cH^*\varphi\big)(x)\Big]\,\ud x=\alpha_0\int_{b_1}^{b_2}\!\frac{\partial}{\partial x}\Big[\big(\cH^*\varphi\big)(x)\Big]\,\ud x= 0,
\end{align*}
where the final equality holds because $(\cH^*\varphi)(b_1)=(\cH^*\varphi)(b_2)=0$.
Hence, we reach a contradiction and the function $b$ must be right-continuous. 
\end{proof}
A simple corollary follows.
\begin{corollary}
If $\mu\in C^{2;\gamma}_{\ell oc}(\cO)$, $h\in C^{0,2;\gamma}_{\ell oc}([0,T)\times\cO)$ and, for each $t\in[0,T)$, either $h_{xx}(t,b(t))>0$ or $\mu_{xx}(b(t))>0$ (or both) then the mapping $t\to b(t)$ is continuous on $[0,T)$.
\end{corollary}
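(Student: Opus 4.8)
This is intended to follow directly from the preceding proposition, so the plan is simply to check that its local hypotheses hold at every point of $(0,T)$ under the stated global assumptions, and then to deal separately with the endpoint $t=0$. First I would fix $t_0\in(0,T)$. The pointwise condition ``$h_{xx}(t_0,b(t_0))>0$ or $\mu_{xx}(b(t_0))>0$'' is only meaningful when $b(t_0)$ is a finite point of $\cO$; in particular $(t_0,b(t_0))$ is then an interior point of $[0,T)\times\cO$, and since $\mu\in C^{2;\gamma}_{\ell oc}(\cO)$ and $h\in C^{0,2;\gamma}_{\ell oc}([0,T)\times\cO)$ one can choose a bounded open neighbourhood $U_0$ of $(t_0,b(t_0))$ with $\overline{U_0}\subset[0,T)\times\cO$ on which $\mu\in C^{2;\gamma}(U_0)$ and $h\in C^{0,2;\gamma}(U_0)$. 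Together with the positivity assumption, these are precisely the hypotheses of the previous proposition, which therefore yields continuity of $b$ at $t_0$. As $t_0\in(0,T)$ was arbitrary, $b$ is continuous on $(0,T)$.

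It remains to treat $t_0=0$, where ``continuity on $[0,T)$'' amounts to right-continuity and Corollary \ref{cor:bproper} does not directly apply. Since $b$ is non-decreasing, the right limit $b(0+)\coloneqq\lim_{t\downarrow 0}b(t)$ exists and $b(0+)\ge b(0)$, so it suffices to exclude $b(0+)>b(0)$. For this I would observe that the part of the previous proposition's proof establishing right-continuity never uses $t_0>0$: assuming $b(0+)>b(0)$ one picks $b(0)<b_1<b_2<b(0+)$ and $\eps>0$ with $[b_1,b_2]\times[0,\eps]\subset U_0$, notes that $(0,\eps)\times(b_1,b_2)$ lies in $\cC\cap\cI$ (by monotonicity of $b$ and right-continuity of $a$ at $0$, cf.\ Proposition \ref{prop:a-rc}) so that the classical equation \eqref{eq:PDEvxxproof} for $v_{xx}$ is valid on that rectangle, tests it against a mollifier $\varphi\in C^\infty_c((b_1,b_2))$, uses $v_x(0,\cdot)\equiv\alpha_0$ on $(b_1,b_2)$ together with convexity of $v(\eps,\cdot)$ (Proposition \ref{prop:vconvex}) exactly as in \eqref{eq:BPjump2}, and finally divides by $\eps$ and lets $\eps\downarrow0$ to reach the contradiction $\beta_0\le0$. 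Hence $b(0+)=b(0)$ and $b$ is continuous on $[0,T)$.

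There is no substantial obstacle here: this is a genuine corollary, and all the real work sits in the preceding proposition. The only two points deserving care are (i) recording that the pointwise positivity hypothesis forces $b(t_0)$ to be a finite point of $\cO$, so that the neighbourhood $U_0\subset[0,T)\times\cO$ can be found and the local regularity of $\mu$ and $h$ invoked there; and (ii) the left endpoint $t=0$, which is handled by re-running the right-continuity argument of the previous proposition verbatim with $t_0=0$.
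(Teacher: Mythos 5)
Correct, and it is essentially the same (unwritten) proof the paper has in mind: apply the preceding proposition at each interior time, noting that the global regularity hypotheses on $\mu$ and $h$ supply the required local neighbourhood $U_0$ once the pointwise positivity hypothesis forces $b(t_0)$ to be a finite point of $\cO$. Your extra care at $t_0=0$ --- where the proposition is not literally stated but its right-continuity argument runs verbatim since only monotonicity of $b$, right-continuity of $a$, $v_x(0,\cdot)\equiv\alpha_0$ beyond $b(0)$, and convexity of $v(\eps,\cdot)$ are used --- closes a small gap that the paper glosses over with ``a simple corollary follows.''
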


\appendix

\section{}
\subsection{Proof of Lemma \ref{lem:convexity}}\label{app:comparison}
We notice that for this proof we do not need $\mu_x$ to be bounded from above as indicated in Assumption \ref{ass:1a}. It suffices that $\mu$ be Lipschitz on $[-n,n]$ when $\cO=\R$ and on $[0,n]$ when $\cO=(0,\infty)$ for all $n\in\N$. We first prove the lemma assuming that $\mu$ is Lipschitz continuous and we later extend it to locally Lipschitz $\mu$. Let $\mu:\cO\to\R$ be Lipschitz and convex. Let $x_1,x_2\in\cO$ and $t=0$. Let also $\lambda\in(0,1)$ and denote $x_\lambda=\lambda x_1+(1-\lambda)x_2$. Let $\nu^{1}\in\cA_{0,x_1}$ and $\nu^{2}\in\cA_{0,x_2}$ and set $\nu^{\lambda}=\lambda\nu^1+(1-\lambda)\nu^2$. Finally, denote $X^1=X^{\nu_1;x_1}$, $X^2=X^{\nu_2;x_2}$ and $X^\lambda=X^{\nu^\lambda;x_\lambda}$ for simplicity.

For $Y^{\lambda}\coloneqq\lambda X^{1}+(1-\lambda)X^{2}$, the process $X^\lambda_{\cdot\wedge\theta_\lambda}-Y^\lambda_{\cdot\wedge\theta_\lambda}$ is a {\em continuous} semi-martingale with dynamics 
\begin{align*}
X^{\lambda}_{t\wedge\theta_\lambda}\!\!-\!Y^{\lambda}_{t\wedge\theta_\lambda}\!\!=\!\!\!\int_{0}^{t\wedge\theta_\lambda}\!\!\!\!\big(\mu(X_s^\lambda)\!-\!\lambda\mu(X_s^{1})\!-\!(1\!-\!\lambda)\mu(X_s^{2})\big)\ud s\!+\!\int_{0}^{t\wedge\theta_\lambda}\!\!\!\!\big(\sigma(X_s^\lambda)\!-\!\lambda\sigma(X_s^{1})\!-\!(1\!-\!\lambda)\sigma(X_s^{2})\big)\ud W_s.
\end{align*}
Then, by Tanaka's formula \cite[Thm.\ IV.68]{protter2005stochastic}, 
\begin{align}\label{eq:convexsde}
(X^{\lambda}_{t\wedge\theta_\lambda}-Y^{\lambda}_{t\wedge\theta_\lambda})^+&=\int_{0}^{t\wedge\theta_\lambda}\!\!\mathds{1}_{\{X_s^\lambda>Y_s^\lambda\}}\big(\mu(X_s^\lambda)\!-\!\lambda\mu(X_s^{1})\!-\!(1\!-\!\lambda)\mu(X_s^{2})\big)\,\ud s\notag\\
&\,+\int_{0}^{t\wedge\theta_\lambda}\!\!\mathds{1}_{\{X_s^\lambda>Y_s^\lambda\}}\big(\sigma(X_s^\lambda)\!-\!\lambda\sigma(X_s^{1})\!-\!(1\!-\!\lambda)\sigma(X_s^{2})\big)\,\ud W_s\!+\!\tfrac{1}{2}L_{t\wedge\theta_\lambda}^0(X^\lambda\!-\!Y^\lambda),
\end{align}
where $L^0(X^\lambda-Y^\lambda)$ is the semi-martingale local-time at zero of $X^\lambda-Y^\lambda$. 

From \cite[Cor. 3 of Thm.\ 75]{protter2005stochastic} we know that 
\[
L_{t\wedge\theta_\lambda}^0(X^\lambda-Y^\lambda)=\lim_{\varepsilon\downarrow0}\frac{1}{\varepsilon}\int_0^{t\wedge\theta_\lambda}\!\mathds{1}_{\{0\leq X^\lambda_s-Y^\lambda_s\leq\varepsilon\}}\big(\sigma(X_s^\lambda)\!-\!\lambda\sigma(X_s^{1})\!-\!(1\!-\!\lambda)\sigma(X_s^{2})\big)^2\,\ud s.
\]
Using $\sigma(x)=\sigma_0+\sigma_1 x$ with $\sigma_1+\sigma_0>0$ and $\sigma_0\cdot\sigma_1=0$ (cf.\ Assumption \ref{ass:1a}(iii)) we obtain
\begin{align}\label{eq:loctimebnd}
\begin{aligned}
L_{t\wedge\theta_\lambda}^0(X^\lambda-Y^\lambda)=&\,\lim_{\varepsilon\downarrow0}\frac{1}{\varepsilon}\int_0^{t\wedge\theta_\lambda}\!\mathds{1}_{\{0\le X^\lambda_s-Y^\lambda_s\le\varepsilon\}}\big(\sigma(X_s^\lambda)\!-\!\lambda\sigma(X_s^{1})\!-\!(1\!-\!\lambda)\sigma(X_s^{2})\big)^2\,\ud s\\
\le&\,\lim_{\varepsilon\downarrow0}\frac{\sigma_1^2}{\varepsilon}\int_0^{t\wedge\theta_\lambda}\!\mathds{1}_{\{0\le X^\lambda_s-Y^\lambda_s\le\varepsilon\}}\big(X_s^\lambda-Y^\lambda_s\big)^2\,\ud s
\leq\lim_{\varepsilon\downarrow0}(t\sigma_1^2\varepsilon)=0,\qquad\text{$\P$-a.s.}
\end{aligned}
\end{align}

We take expectations on both sides of \eqref{eq:convexsde} and use \eqref{eq:loctimebnd}. The stochastic integral is a (local) martingale and it disappears (up to standard localisation, if needed). By convexity of $\mu$ we have $\mu(Y_s^{\lambda})\le\lambda\mu(X_s^{1})+(1-\lambda)\mu(X_s^{2})$ for all $s\in[\![0,t\wedge\theta_\lambda)\!)$. Then, 
\begin{align}
\E\big[(X^{\lambda}_{t\wedge\theta_\lambda}-Y^{\lambda}_{t\wedge\theta_\lambda})^+\big]=&\,\E\Big[\int_{0}^{t\wedge\theta_\lambda}\mathds{1}_{\{X_s^\lambda>Y_s^\lambda\}}\big(\mu(X_s^\lambda)-\lambda\mu(X_s^{1})-(1-\lambda)\mu(X_s^{2})\big)\,\ud s\Big]\\
\leq&\,\E\Big[\int_{0}^{t\wedge\theta_\lambda}\mathds{1}_{\{X_s^\lambda>Y_s^\lambda\}}\big(\mu(X_s^\lambda)-\mu(Y_s^{\lambda})\big)\,\ud s\Big]\\
\leq&\,\E\Big[L\int_{0}^{t\wedge\theta_\lambda}\mathds{1}_{\{X_s^\lambda>Y_s^\lambda\}}\big|X_s^\lambda-Y_s^{\lambda}\big|\,\ud s\Big]\\
=&\E\Big[L\int_{0}^{t\wedge\theta_\lambda}\big(X_s^\lambda-Y_s^{\lambda}\big)^+\,\ud s\Big]\le L\int_{0}^{t}\E\Big[\big(X_{s\wedge\theta_\lambda}^\lambda-Y_{s\wedge\theta_\lambda}^{\lambda}\big)^+\Big]\,\ud s ,
\end{align}
where the second inequality is by the Lipschitz continuity of $\mu$ with constant $L$. By an application of Gronwall's lemma to the function $f(t)\coloneqq\E[(X^\lambda_{t\wedge\theta_\lambda}-Y^\lambda_{t\wedge\theta_\lambda})^+]$, we obtain $f(t)=0$
for all $t\in[0,T]$. Since the process $X^{\lambda}-Y^{\lambda}$ is continuous, then $X^{\lambda}_{t\wedge\theta_\lambda}\leq Y^{\lambda}_{t\wedge\theta_\lambda}$, for all $t\in[0,T]$, $\P$-a.s.

Now we relax the Lipschitz assumption on $\mu$. For simplicity and with no loss of generality we consider $\cO=\R$ so that $\theta_\lambda=T$. Assume that $\mu$ is only locally Lipschitz with linear growth. Then 
\begin{align}\label{eq:Xzsquare}
\E\Big[\sup_{0\leq t\leq T}|X_t^{\nu;y}|^2\Big]\le c(1+|y|^2+\E[\nu^2_T])<\infty,\quad\text{for any $y\in\cO$ and $\nu\in\cA_{0,y}$,}
\end{align}
by standard estimates \cite[Cor.\ 2.5.10]{krylov1980controlled}. Letting 
$\tau_n^{\nu;y}\coloneqq \inf\{s\ge 0: X^{\nu;y}_s\notin (-n,n)\}\wedge T$,
from \eqref{eq:Xzsquare} we have $\P(\tau^{\nu;y}_n\leq T)\to 0$ as $n\to\infty$. 
Fix $n\in\N$ and denote $\tau_n=\tau_n^{\nu^1;x_1}\wedge\tau_n^{\nu^2;x_2}\wedge\tau_n^{\nu^\lambda;x_\lambda}$. Repeating the first part of the proof with $(X_t^{\lambda})_{t \in[0,T]}$ and $(Y_t^{\lambda})_{t \in[0,T]}$ replaced by $(X_{t\wedge\tau_n}^{\lambda})_{t \in[0,T]}$ and $(Y_{t\wedge\tau_n}^{\lambda})_{t \in[0,T]}$, respectively, we have
$\E\big[(X^{\lambda}_{t\wedge\tau_n}-Y^{\lambda}_{t\wedge \tau_n})^+\big]=0$,
for all $t\in[0,T]$. Thus, by Fatou's lemma and up to selecting a subsequence $(\tau_{n_k})_{k\in\N}$ such that $\tau_{n_k}\uparrow T$, $\P$-a.s., we have
\[
\E\big[(X^{\lambda}_{t}-Y^{\lambda}_{t})^+\big]\leq \lim_{k\to\infty}\E\big[(X^{\lambda}_{t\wedge\tau_{n_k}}-Y^{\lambda}_{t\wedge \tau_{n_k}})^+\big]=0.
\]
By continuity of the process $X^{\lambda}-Y^{\lambda}$ we conclude that $X_t^\lambda\leq Y_t^\lambda$, for all $t\in[0,T]$, $\P$-a.s. When $\cO=(0,\infty)$, we simply replace $t$ with $t\wedge\theta_\lambda$ in the argument above. \hfill$\square$

\begin{remark}\label{rem:lemconvex}
When $\cO=(0,\infty)$, if $x_1=0$ and $\nu^1_t\equiv 0$, the assumptions that $\mu(0)=0$ and $\sigma(x)=\sigma_1 x$, imply $X^{\nu^1;x_1}_t\equiv 0$ for all $t\in[0,T]$. Analogously, for $x_2=0$ and $\nu^2_t\equiv 0$, we have $X^{\nu^2;x_2}_t\equiv 0$, $t\in[0,T]$.
A careful inspection of the proof above shows that $x_1=0$ and $\nu^1_t\equiv 0$ imply $X^\lambda_{t\wedge \theta_\lambda}\le (1-\lambda)X^{\nu^2;x_2}_{t\wedge\theta_\lambda}$, whereas $x_2=0$ and $\nu^2_t\equiv 0$ imply $X^\lambda_{t\wedge \theta_\lambda}\le \lambda X^{\nu^1;x_1}_{t\wedge\theta_\lambda}$.
\end{remark}

\subsection{Proof of Lemma \ref{lem:SRP}}
We use a standard approach based on Picard iteration. Full details on this approach for {\em non-reflecting} SDEs can be found, for example, in the proof of \cite[Thm.\ 9.2]{baldi2017stochastic}.
First we assume that $\mu$ and $\sigma$ are Lipschitz and later we relax this assumption to locally Lipschitz.

Let $L>0$ be such that $|\mu(x)-\mu(y)|+|\sigma(x)-\sigma(y)|\leq L|x-y|$ for $x,y\in\cO$. When $\cO=(0,\infty)$ we consider Lipschitz continuous extensions of $\mu$ and $\sigma$ to $\R$ and we denote them again by $\mu$ and $\sigma$. That is required for the Picard iteration procedure which, in each step, produces a process that lives on $\R$. Fix $(t,x)\in[0,T]\times\cO$ and set $X^{(0)}_s\coloneqq x\!-\!(x\!-\!f(t))^+$ and $\nu^{(0)}_s=\!-(x\!-\!f(t))^+$ for $s\in[0,T\!-\!t]$. Define iteratively, for $n\ge 1$, $n\in\N$,
\begin{align*}
X^{(n)}_s\coloneqq &\,x + \int_0^s\!\mu(X^{(n-1)}_u)\, \ud u+ \int_0^s\!\sigma(X^{(n-1)}_u)\, \ud W_u+\nu^{(n)}_s,
\end{align*}
where
\begin{align*}
&\nu^{(n)}_s\coloneqq-\sup_{0\leq u\leq s}\Big(x + \int_0^u\!\mu(X^{(n-1)}_v)\, \ud v+ \int_0^u\!\sigma(X^{(n-1)}_v)\, \ud W_v-f(t+u)\Big)^+.
\end{align*}
It will be shown that the pair $(X^{(n)},\nu^{(n)})$ solves a Skorokhod reflection problem at $f$. Notice though that $X^{(n)}$ does not solve a reflecting SDE.

It is clear by construction that for each $n$ the pair $(X^{(n)},\nu^{(n)})$ is $\F$-adapted, $X^{(n)}_{0}=f(t)\wedge x$, $X^{(n)}_s\in\R$ for all $s\in[0,T-t]$, $\nu^{(n)}$ is non-increasing, with $\nu^{(n)}_0=-(x-f(t))^+$. Possible jumps in the process $\nu^{(n)}$ may only occur at times $s\in[0,T-t]$ when $f(t+s)<\lim_{\eps\downarrow 0}f(t+s+\eps)$. Next we show that indeed $\nu^{(n)}$ is continuous. Notice that 
\begin{align*}
\lim_{\eps\downarrow 0}\nu^{(n)}_{s+\eps}&=-\lim_{\eps\downarrow 0}\sup_{0\leq u\leq s+\eps}\Big(x + \int_0^u\!\mu(X^{(n-1)}_v)\, \ud v+ \int_0^u\!\sigma(X^{(n-1)}_v)\, \ud W_v-f(t+u)\Big)^+\\
&\ge-\lim_{\eps\downarrow 0}\sup_{0\leq u\leq s+\eps}\Big(x + \int_0^u\!\mu(X^{(n-1)}_v)\, \ud v+ \int_0^u\!\sigma(X^{(n-1)}_v)\, \ud W_v-f(t+u\wedge s)\Big)^+\\
&=\nu^{(n)}_s\wedge\Big[-\lim_{\eps\downarrow 0}\sup_{s<u\leq s+\eps}\Big(x + \int_0^u\!\mu(X^{(n-1)}_v)\, \ud v+ \int_0^u\!\sigma(X^{(n-1)}_v)\, \ud W_v-f(t+s)\Big)^+\Big]=\nu^{(n)}_s,
\end{align*}
where the inequality holds because $f$ is non-decreasing and the final equality uses continuity of the process inside the supremum. Since $\nu^{(n)}$ is non-increasing by definition, then the inequality above shows that $\nu^{(n)}$ is right-continuous. The process
\[
u\mapsto x + \int_0^u\!\mu(X^{(n-1)}_v)\, \ud v+ \int_0^u\!\sigma(X^{(n-1)}_v)\, \ud W_v-f(t+u),
\]
is left-continuous, thus left-continuity of $\nu^{(n)}$ follows. Hence $\nu^{(n)}$ and $X^{(n)}$ are continuous on $(0,T-t]$ as claimed. By construction we have $X^{(n)}_s\le f(t+s)$ for all $s\in[0,T-t]$ and it remains to show the Skorokhod condition analogue of the second equation in \eqref{eq:refcond}. Fix $\omega\in\Omega$ and let $s_0\in[0,T-t)$ be such that $X^{(n)}_{s_0}(\omega)<f(t+s_0)$. Then, using the expression of $X^{(n)}$ and rearranging terms we obtain
\begin{align}\label{eq:Sk}
x + \int_0^{s_0}\!\mu\big(X^{(n-1)}_u(\omega)\big)\, \ud u+ \Big(\int_0^{s_0}\!\sigma(X^{(n-1)}_u)\, \ud W_u\Big)(\omega)-f(t+s_0)<-\nu^{(n)}_{s_0}(\omega).
\end{align}
The strict inequality implies that there is $\hat s=\hat s_\omega\in[0,s_0)$ such that $\nu^{(n)}_{u}(\omega)=\nu^{(n)}_{\hat s}(\omega)$ for $u\in(\hat s,s_0]$. Moreover, taking $s_1>s_0$ but sufficiently close to $s_0$, since $f$ is non-decreasing, we have for $s\in(s_0,s_1)$ 
\begin{align*}
&x + \int_0^{s}\!\mu\big(X^{(n-1)}_u(\omega)\big)\, \ud u+ \Big(\int_0^{s}\!\sigma(X^{(n-1)}_u)\, \ud W_u\Big)(\omega)-f(t+s)\\
&\le x + \int_0^{s}\!\mu\big(X^{(n-1)}_u(\omega)\big)\, \ud u+ \Big(\int_0^{s}\!\sigma(X^{(n-1)}_u)\, \ud W_u\Big)(\omega)-f(t+s_0) \le -\nu^{(n)}_{s_0}(\omega),
\end{align*}
where the last inequality holds by continuity.
Then $\nu^{(n)}_{u}(\omega)=\nu^{(n)}_{\hat s}(\omega)$ for $u\in(\hat s,s_1)$ and $\ud \nu^{(n)}_{s_0}(\omega)=0$, as needed. 

Next we want to show that $(X^{(n)},\nu^{(n)})\to (X^{\nu^f\!;x},\nu^f)$ as $n\to\infty$ where $(X^{\nu^f\!;x},\nu^f)$ is the solution of {\bf RSDE}-$f$. With no loss of generality we choose $t=0$. For the first two terms in the Picard's iteration we have, for any $\gamma\in[0,T]$, 
\begin{align}\label{eq:skorgrowbnd}
\begin{aligned}
\E&\Big[\sup_{0\leq s\leq \gamma}\Big|X^{(1)}_s-X^{(0)}_s|^2\Big]\\
\leq &\,3\E\Big[\sup_{0\leq s\leq \gamma}\Big|\int_0^s\!\mu(X^{(0)}_v)\, \ud v\Big|^2+\sup_{0\leq s\leq \gamma}\Big|\int_0^s\!\sigma(X^{(0)}_v)\, \ud W_v\Big|^2+\sup_{0\leq s\leq \gamma}\Big|\nu^{(1)}_s-\nu^{(0)}_s\Big|^2\Big].
\end{aligned}
\end{align}
By the Lipschitz condition on the coefficients we have also that they grow at most linearly with constant which we denote again by $L$, with a slight abuse of notation. Then using that $X^{(0)}_v=x\wedge f(0)$ for $v>0$ and Doob's inequality yields
\begin{align}\label{eq:pic0}
&\E\Big[\sup_{0\leq s\leq \gamma}\Big|\int_0^s\!\mu(X^{(0)}_v)\, \ud v\Big|^2+\sup_{0\leq s\leq \gamma}\Big|\int_0^s\!\sigma(X^{(0)}_v)\, \ud W_v\Big|^2\Big]\le 2L^2 \gamma(4+T)\big[1+(x\wedge f(0))^2\big].
\end{align}
For the remaining term in \eqref{eq:skorgrowbnd} we observe that 
\begin{align*}
\begin{aligned}
0&\le \nu^{(0)}_s-\nu^{(1)}_s=\sup_{0\le v\le s}\Big(x+\mu(x\wedge f(0))v+\sigma(x\wedge f(0))W_v-f(v) \Big)^+-(x-f(0))^+\\
&\le \sup_{0\le v\le s}\Big(x+\mu(x\wedge f(0))v+\sigma(x\wedge f(0))W_v-f(0) \Big)^+-(x-f(0))^+\\
&\le \sup_{0\le v\le s}\Big|\mu(x\wedge f(0))v+\sigma(x\wedge f(0))W_v\Big|,
\end{aligned}
\end{align*}
where in the second inequality we use monotonicity of $f$ and in the final one $(a)^+-(b)^+\le |a-b|$. Then, applying the same bounds as in \eqref{eq:pic0} and combining all the estimates, we obtain from \eqref{eq:skorgrowbnd} 
\begin{align}\label{eq:Cg}
\begin{aligned}
\E\Big[\sup_{0\leq s\leq \gamma}\Big|X^{(1)}_s-X^{(0)}_s|^2\Big]
\leq 18 L^2 \gamma(4+T)\big[1+(x\wedge f(0))^2\big]\eqqcolon C\gamma.
\end{aligned}
\end{align}

We continue our estimates by induction. Assume that for some $n\in\N$, $n\ge 1$
\begin{align*}
\E\Big[\sup_{0\leq t\leq \gamma}\Big|X^{(n)}_t-X^{(n-1)}_t\Big|^2\Big]\leq \frac{(C\gamma)^{n}}{n!}.
\end{align*}
Using the definitions of $X^{(n)}$ and $X^{(n+1)}$ we have
\begin{align}\label{eq:pic1}
\begin{aligned}
&\E\Big[\sup_{0\leq s\leq \gamma}\Big|X^{(n+1)}_s\!-\!X^{(n)}_s\Big|^2\Big]\leq 3\E\Big[\sup_{0\leq s\leq \gamma}\Big|\int_0^s\!\!\big(\mu(X^{(n)}_v)\!-\!\mu(X^{(n-1)}_v)\big)\ud v\Big|^2\Big]\\
&\qquad+3\E\Big[\sup_{0\leq s\leq \gamma}\Big(\Big|\int_0^s\!\!\big(\sigma(X^{(n)}_v)\!-\!\sigma(X^{(n-1)}_v)\big)\ud W_v\Big|^2\!+\!\Big|\nu^{(n+1)}_s\!-\!\nu^{(n)}_s\Big|^2\Big)\Big].
\end{aligned}
\end{align}
Lipschitz continuity of $\mu$ yields
\begin{align}\label{eq:diffmun}
\begin{aligned}
&\E\Big[\sup_{0\leq s\leq \gamma}\Big|\int_0^s\!\big(\mu(X^{(n)}_v)-\mu(X^{(n-1)}_v)\big)\, \ud v\Big|^2\Big]\\
&\leq\E\Big[T\int_0^\gamma\!\sup_{0\leq v\leq s}\big|\mu(X^{(n)}_v)-\mu(X^{(n-1)}_v)\big|^2\, \ud s\Big]\\
&\le T L^2 \int_0^\gamma\!\E\Big[\sup_{0\leq v\leq s}\big|X^{(n)}_v-X^{(n-1)}_v\big|^2\Big]\, \ud s\leq TL^2\int_0^\gamma\!\frac{(C s)^n}{n!}\, \ud s = TL^2C^n\frac{\gamma^{n+1}}{(n+1)!},
\end{aligned}
\end{align}
where we used H\"older's inequality and the inductive hypothesis. Similarly, Lipschitz continuity of $\sigma$, combined with Doob's inequality yields 
\begin{align}\label{eq:diffsign}
&\E\Big[\sup_{0\leq s\leq \gamma}\Big|\int_0^s\!\big(\sigma(X^{(n)}_v)-\sigma(X^{(n-1)}_v)\big)\, \ud W_v\Big|^2\Big]\\
&\leq\E\Big[4\int_0^\gamma\!\sup_{0\leq v\leq s}\big|\sigma(X^{(n)}_v)-\sigma(X^{(n-1)}_v)\big|^2\, \ud s\Big]
\leq 4\int_0^\gamma\! L^2C^n\frac{s^n}{n!}\,\ud s =4L^2C^n \frac{\gamma^{n+1}}{(n+1)!}.
\end{align}

For the third term in \eqref{eq:pic1} we have 
\begin{align}\label{eq:seqcontr}
\begin{aligned}
&\E\Big[\sup_{0\leq s\leq \gamma}\Big|\nu^{(n+1)}_s-\nu^{(n)}_s\Big|^2\Big]\\
&=\E\Big[\sup_{0\leq s\leq \gamma}\Big|\sup_{0\leq v\leq s}\Big(x + \int_0^v\!\mu(X^{(n)}_u)\, \ud u+ \int_0^v\!\sigma(X^{(n)}_u)\, \ud W_u-f(v)\Big)^+\\
&\qquad\qquad\qquad-\sup_{0\leq v\leq s}\Big(x + \int_0^v\!\mu(X^{(n-1)}_u)\, \ud u+ \int_0^v\!\sigma(X^{(n-1)}_u)\, \ud W_u-f(v)\Big)^+\Big|^2\Big]\\
&\leq\E\Big[ \sup_{0\leq s\leq \gamma}\Big|\int_0^s\!\big(\mu(X^{(n)}_u)-\mu(X^{(n-1)}_u)\big) \ud u+ \int_0^s\!\big(\sigma(X^{(n)}_u)-\sigma(X^{(n-1)}_u)\big) \ud W_u\Big|^2\Big]\\
&\leq 2(TL^2+4L^2)C^n\frac{\gamma^{n+1}}{(n+1)!},
\end{aligned}
\end{align}
where for the last inequality we used the same bounds as in \eqref{eq:diffmun} and \eqref{eq:diffsign}. 

Combining the estimates above we obtain 
\begin{align*}
\E\Big[\sup_{0\leq s\leq \gamma}\Big|X^{(n+1)}_s-X^{(n)}_s\Big|^2\Big]\leq\,12\big( TL^2+4L^2\big)C^n\frac{\gamma^{n+1}}{(n+1)!}\leq \frac{(C\gamma)^{n+1}}{(n+1)!},
\end{align*}
by recalling the definition of $C$ in \eqref{eq:Cg}.
It follows that $(X^{(n)})_{n\in\N}$ is a Cauchy Sequence in $L^2(\Omega;L^\infty(0,T))$ and we denote by $X^{\infty}$ its limit. Using the same estimates as above, based on Lipschitz continuity of the coefficients and Doob's inequality we obtain
\begin{align}\label{eq:musigmanto0}
\begin{aligned}
&\E\Big[\sup_{0\leq s\leq T}\Big|\int_0^s\!\big(\mu(X^{(n)}_v)-\mu(X^{\infty}_v)\big) \ud v\Big|^2\Big]\leq T L^2\E\Big[\sup_{0\leq s\leq T}|X^{(n)}_s-X^{\infty}_s|^2\Big]\xrightarrow{n\to\infty} 0, \\
&\E\Big[\sup_{0\leq s\leq T}\Big|\int_0^s\!\big(\sigma(X^{(n)}_v)-\sigma(X^{\infty}_v)\big) \ud W_v\Big|^2\Big]\leq 4TL^2\E\Big[\sup_{0\leq s\leq T}|X^{(n)}_s-X^{\infty}_s|^2\Big]\xrightarrow{n\to\infty} 0.
\end{aligned}
\end{align}
Define
\begin{align}\label{eq:optcontrde}
\nu_s^{\infty}=-\sup_{0\leq v\leq s}\Big(x + \int_0^v\!\mu(X^{\infty}_u)\, \ud u+ \int_0^v\!\sigma(X^{\infty}_u)\, \ud W_u-f(v)\Big)^+.
\end{align}
Again, arguing as in \eqref{eq:seqcontr} we obtain 
\begin{align}\label{eq:convnu}
\begin{aligned}
&\E\Big[\sup_{0\leq s\leq T}|\nu_s^{(n)}\!-\!\nu_s^{\infty}|^2\Big]\\
&\leq\E\Big[\sup_{0\leq s\leq T}\Big|\int_0^s\!\big(\mu(X^{(n)}_u)\!-\!\mu(X^{\infty}_u)\big) \ud u\!+\! \int_0^s\!\big(\sigma(X^{(n)}_u)\!-\!\sigma(X^{\infty}_u)\big) \ud W_u\Big|^2\Big],
\end{aligned}
\end{align}
hence implying that $\nu^{(n)}\to \nu^{\infty}$ in $L^2(\Omega;L^{\infty}(0,T))$, by \eqref{eq:musigmanto0}. Putting together \eqref{eq:musigmanto0} and \eqref{eq:convnu} we have that $(X^{\infty},\nu^\infty)$ solves \eqref{eq:RSDE}. It remains to show it is also solution of {\bf RSDE}-$f$. 

It is clear that $\nu^\infty_0=-(x-f(0))^+$. Continuity of $t\mapsto \nu^\infty_t$ for $t\in[0,T]$ is obtained by the same argument as for the continuity of $\nu^{(n)}$. That also implies continuity of $X^\infty$ and $X^\infty_0=x\wedge f(0)$. The condition $X^\infty_s\le f(s)$ for all $s\in[0,T]$ follows by direct inspection of \eqref{eq:RSDE}. The second condition of \eqref{eq:refcond} holds by a repetition of the arguments around \eqref{eq:Sk}, with $(X^{(n-1)},\nu^{(n)})$ therein replaced by $(X^\infty,\nu^\infty)$. Finally, for $\cO=\R$ it is clear that $X^\infty\in\R$ (i.e., it is always finite) because $X^\infty\in L^2(\Omega;L^\infty(0,T))$. Therefore $\nu^\infty\in\cA_{0,x}$. Instead, for $\cO=(0,\infty)$ it suffices to notice that the process $X^\infty$ is uncontrolled away from the boundary $f$ thanks to the second condition in \eqref{eq:refcond}. Therefore the boundary point $\{0\}$ is not attainable, as for the process $X^0$ defined in \eqref{eq:SDE0}, and $\nu^\infty\in\cA_{0,x}$.
Thus, we have shown that $(X^\infty,\nu^\infty)$ yields a solution to {\bf RSDE}-$f$ under the assumption of Lipschitz coefficients $\mu$ and $\sigma$.

Uniqueness is proved using an approach that can be found, e.g., in \cite[Thm.\ 1.2.1]{pilipenko2014reflection}. Let $(X^{\nu^1},\nu^1)$ and $(X^{\nu^2},\nu^2)$ be two solution pairs of {\bf RSDE}-$f$. Applying It\^o's formula we have
\begin{align}\label{eq:SDErefuniq}
\begin{aligned}
(X^{\nu^1}_s-X^{\nu^2}_s)^2=&\,\int_0^s\!\Big[2(X^{\nu^1}_u-X^{\nu^2}_u)\big(\mu(X^{\nu^1}_u)-\mu(X^{\nu^2}_u)\big)+\big(\sigma(X_u^{\nu^1})-\sigma(X^{\nu^2}_u)\big)^2\Big]\,\ud u\\
&\,+2\int_0^s\!(X^{\nu^1}_u-X^{\nu^2}_u)\,(\ud \nu^1_u-\ud \nu^2_u)\\
&\,+2\int_0^s\!(X^{\nu^1}_u-X^{\nu^2}_u)\big(\sigma(X^{\nu^1}_u)-\sigma(X^{\nu^2}_u)\big)\,\ud W_u.
\end{aligned}
\end{align}
By the Skorokhod condition, the second integral on the right-hand side above can be rewritten as
\begin{align*}
&\int_0^s\!(X^{\nu^1}_u\!-\!X^{\nu^2}_u)(\ud \nu^1_u\!-\!\ud \nu^2_u)\\
&=\int_0^s\!\mathds{1}_{\{X^{\nu^1}_{u}=f(u)\}}(X^{\nu^1}_u\!-\!X^{\nu^2}_u)\,\ud \nu^1_u\!-\!\int_0^s\!\mathds{1}_{\{X^{\nu^2}_{u}=f(u)\}}(X^{\nu^1}_u\!-\!X^{\nu^2}_u)\,\ud \nu^2_u\leq 0,
\end{align*}
where the inequality holds because $\nu^1$ and $\nu^2$ are non-increasing and $\max\{X^{\nu^1}_u,X^{\nu^2}_u\}\le f(u)$ for all $u\in[0,s]$.
Thus, taking expectation in \eqref{eq:SDErefuniq}, we obtain 
\begin{align*}
\E\Big[|X^{\nu^1}_s-X^{\nu^2}_s|^2\Big]\leq \E\Big[(2L+L^2)\int_0^s|X^{\nu^1}_u-X^{\nu^2}_u|^2 \,\ud u\Big],
\end{align*}
by the Lipschitz property of $\mu$ and $\sigma$ and upon noticing that the stochastic integral is a (local) martingale, so that we can remove it (possibly using a standard localisation procedure). An application of Gronwall's lemma yields $\P(X^{\nu^1}_s=X^{\nu^2}_s)=1$ for all $s\in[0,T]$ and, by continuity of the trajectories, $\P(X^{\nu^1}_s=X^{\nu^2}_s,\,s\in[0,T])=1$ as usual. Hence, we also have $\P(\nu^1_s=\nu^2_s,\,s\in[0,T])=1$. Thus uniqueness is proven when $\mu$ and $\sigma$ are Lipschitz.

It remains to relax the Lipschitz assumption on $\mu$ and $\sigma$. For the ease of exposition but with no loss of generality here we consider $\cO=\R$. From now on we assume $\mu$ and $\sigma$ locally Lipschitz with linear growth. Following a classical approach (e.g., see \cite[Thm.\ 9.4]{baldi2017stochastic}). Let $(\phi_j)_{j\in\N}$ be $C^{\infty}$-functions on $\R$ such that $0\le \phi_j\le 1$, $\phi_j(x)=1$ for $|x|\leq j$ and $\phi_j(x)=0$ for $|x|\geq j+1$. Let $\mu_j(x)=\mu(x)\phi_j(x)$ and $\sigma_j(x)=\sigma(x)\phi_j(x)$. From the proof above, there exists a unique pair $(X^{\nu^j},\nu^j)$ that characterises a solution of {\bf RSDE}-$f$ with coefficients $\mu_j$ and $\sigma_j$. 

Set $\tau_j\coloneqq\inf\{s\ge 0: X^{\nu^j}\notin(-j,j)\}$. Thanks to uniqueness, for any $k\in\N$, $k\ge j$, we also have $\tau_j=\inf\{s\ge 0: X^{\nu^k}\notin(-j,j)\}$ and, more importantly, 
\[
(X^{\nu^j}_{s\wedge\tau_j},\nu^j_{s\wedge\tau_j})_{s\in[0,T]}=(X^{\nu^k}_{s\wedge\tau_j},\nu^k_{s\wedge\tau_j})_{s\in[0,T]}
\] 
up to indistinguishability. This observation uniquely defines a pair $(X^{\nu^f},\nu^f)$ that solves {\bf RSDE}-$f$ with coefficients $\mu$ and $\sigma$, on the stochastic interval $[\![0,\tau_j]\!]$, because $(\mu_j,\sigma_j)=(\mu,\sigma)$ on the interval $(-j,j)$. 
Since $\tau_j\le \tau_{j+1}$ then we can define $\tau_\infty=\lim_{j\to\infty}\tau_j$ and we want to show that $\tau_\infty>T$, $\P$-a.s. Then the pair $(X^{\nu^f},\nu^f)$ solves {\bf RSDE}-$f$ uniquely on $[0,T]$. 

Since
\begin{align}
\begin{aligned}
\big|X^{\nu^j}_s\big|^2\le 4 \Big(x^2+ \Big|\int_0^{s}\mu(X^{\nu^j}_u)\ud u\Big|^2+\Big|\int_0^{s}\sigma(X^{\nu^j}_u)\ud W_u\Big|^2+|\nu^j_s|^2\Big),
\end{aligned}
\end{align}
then linear growth of $\mu$ and $\sigma$ and 
\begin{align}\label{eq:nuj}
\begin{aligned}
|\nu^j_s|=&\,\sup_{0\leq u\leq s}\Big(x + \int_0^u\!\mu_j(X^{\nu^j}_v)\, \ud v+ \int_0^u\!\sigma_j(X^{\nu^j}_v)\, \ud W_v-f(u)\Big)^+\\
\leq &\,|f(0)|+\sup_{0\leq u\leq s}\Big|x + \int_0^u\!\mu_j(X^{\nu^j}_v)\, \ud v+ \int_0^u\!\sigma_j(X^{\nu^j}_v)\, \ud W_v\Big|,
\end{aligned}
\end{align}
allow us to repeat usual estimates based on Doob's inequality (e.g., as in \eqref{eq:skorgrowbnd}) and Gronwall's lemma. Then we obtain 
\begin{align}\label{eq:L2Xnu}
\sup_{j\in\N}\E\Big[\sup_{0\leq s\leq T}|X^{\nu^j}_s|^2\Big]\leq C,
\end{align}
for some $C>0$ independent of $j\in\N$. Finally, 
\begin{align*}
\P(\tau_j\leq T)\leq \P\Big(\sup_{0\leq s\leq T}|X^{\nu^j}_s|\geq j\Big)\leq \frac{1}{j^2}\E\Big[\sup_{0\leq s\leq T}|X^{\nu^j}_s|^2\Big]\leq\frac{C}{j^2}\xrightarrow{j\to\infty}0.
\end{align*}
That shows $\P(\tau_\infty>T)=1$ as needed. It then follows from \eqref{eq:L2Xnu} that $\E[\sup_{0\leq s\leq T}|X^{\nu^f}_s|^2]\leq C$. That, combined with \eqref{eq:nuj}, allows us to conclude that \eqref{eq:nuL2} also holds by linear growth of $\mu$ and $\sigma$, using again estimates as above based on H\"older's inequality and Doob's inequality.\hfill$\square$

\medskip
{\bf Acknowledgments}: This work was presented in June 2023 at the 11th General AMaMeF conference in Bielefeld. We thank participants of the conference for useful feedback. Both authors received partial financial support from EU -- Next Generation EU -- PRIN2022 (2022BEMMLZ) CUP: D53D23005780006. T.\ De Angelis also received partial financial support from PRIN-PNRR2022 (P20224TM7Z) CUP: D53D23018780001.

\bibliographystyle{plain}
\bibliography{Bibliography}
\end{document}